\documentclass[a4paper]{amsart}
\usepackage{amsmath,amsfonts,amssymb,amsthm}
\usepackage{enumerate}
\newtheorem{theorem}{Theorem}[section]
\newtheorem{lemma}[theorem]{Lemma}
\newtheorem{proposition}[theorem]{Proposition}
\newtheorem{corollary}[theorem]{Corollary}
\theoremstyle{remark}
\newtheorem{remark}[theorem]{Remark}
\theoremstyle{definition}

\newcommand{\Mod}[1]{\, (#1)}

\newcommand{\eps}{\varepsilon}
\title[Siegel zeros and primes]{Siegel zeros, twin primes, Goldbach's conjecture, and primes in short intervals}
\author{Kaisa Matom\"aki}
\address{Department of Mathematics and Statistics, University of Turku, 20014 Turku, Finland}
\email{ksmato@utu.fi}
\author{Jori Merikoski}
\address{Mathematical Institute, University of Oxford, Andrew Wiles Building, Radcliffe Observatory Quarter (550), Woodstock Road, Oxford, OX2 6GG}
\email{jori.merikoski@maths.ox.ac.uk}
\subjclass{11M20, 11N36, 11P32}
\keywords{Siegel zeros, twin primes, Goldbach's conjecture}

\begin{document}

\begin{abstract} 
We study the distribution of prime numbers under the unlikely assumption that Siegel zeros exist. In particular we prove for
\[
\sum_{n \leq X} \Lambda(n) \Lambda(\pm n+h)
\]
an asymptotic formula which holds uniformly for $h = O(X)$. Such an asymptotic formula has been previously obtained only for fixed $h$ in which case our result quantitatively improves those of Heath-Brown (1983) and Tao and Ter\"av\"ainen (2021). 

Since our main theorems work also for large $h$ we can derive new results concerning connections between Siegel zeros and the Goldbach conjecture and between Siegel zeros and primes in almost all very short intervals.
\end{abstract}

\maketitle

\section{Introduction}
While the proof of the twin prime conjecture is a distant goal, Heath-Brown~\cite{hbsiegel} proved in 1983 that if there are infinitely many Siegel zeros, then there are infinitely many twin primes. More precisely Heath-Brown showed that if, for a Dirichlet character $\chi \pmod{q}$, the Dirichlet $L$-function $L(s,\chi)$ has a real zero at $s = \beta_0$ with
\begin{equation} \label{eq:szeroassumption}
\beta_0 = 1- \frac{1}{\eta \log q}
\end{equation}
for some $\eta \geq 10$, then, for any $h \geq 1$ and $X \in [q^{250}, q^{500}]$, one has
\[
\sum_{n \leq X} \Lambda(n) \Lambda(n+h) = X\mathfrak{S}_h + O_h\left(\frac{X}{\log \log \eta}\right),
\]
where 
\[
\mathfrak{S}_h := \mathbf{1}_{2 \mid h}  \cdot 2\prod_{p > 2} \left(1-\frac{1}{(p-1)^2}\right) \prod_{\substack{p \mid h \\ p > 2}} \left(1+\frac{1}{p-2}\right) \asymp \mathbf{1}_{2 \mid h} \frac{h}{\varphi(h)}.
\]
Actually Heath-Brown proved a more general result for sums of the form
\[
\sum_{n \leq X} \Lambda(\alpha_1 n + \beta_1) \Lambda(\alpha_2 n + \beta_2).
\]

Very recently Heath-Brown's result was quantitatively improved by Tao and Ter\"av\"ainen~\cite{TT} who showed that, for any $h \geq 1$ and $X \in [q^{41/2+\varepsilon}, q^{\eta^{1/2}}]$, one has
\begin{equation}
\label{eq:H-LTT}
\sum_{n \leq X} \Lambda(n) \Lambda(n+h) = X\mathfrak{S}_h + O_h\left(\frac{X}{\log^{1/20} \eta}\right).
\end{equation}
This is a special case of their more general theorem on "Hardy-Littlewood-Chowla" type correlations
\begin{equation}
\label{eq:TTgeneral}
\sum_{n \leq X} \Lambda(n) \Lambda(n+h) \lambda(n+h_1) \dotsm \lambda(n+h_k).
\end{equation}

In this paper we will prove~\eqref{eq:H-LTT} with better error term and with uniformity in the shift $h$. Furthermore, despite leading to stronger results, our proof is less involved. Before turning to the strongest formulations of our theorems, let us state two corollaries. 
The first corollary improves on~\eqref{eq:H-LTT}. 
\begin{corollary}
\label{cor:simpleMT}
Let $h \in \mathbb{N}$. Let $\chi$ be a primitive quadratic character modulo $q \geq 2$ and assume that $L(s,\chi)$ has a real zero $\beta_0$ such that
\[
\beta_0 = 1-\frac{1}{\eta \log q}
\]
for some $\eta \geq 10$.
\begin{enumerate}[(i)]
\item Let $C \geq 1$. For $X \in [q^{10}, q^{10 \log \eta}]$, we have 
\[
\sum_{n \leq X} \Lambda(n) \Lambda(n+h) = X\mathfrak{S}_h + O_{h, C}\left(X\exp(-C\sqrt{\log \eta})\right).
\]
\item Let $\varepsilon > 0$. For $X = q^V$ with $V \in [10 \log \eta, \eta^{1-\varepsilon}]$, we have
\[
\sum_{n \leq X} \Lambda(n) \Lambda(n+h) = X\mathfrak{S}_h + O_{h}\left(X\frac{\log^6 \eta}{\eta/V} \right).
\]
\end{enumerate}
\end{corollary}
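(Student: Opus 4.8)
Corollary~\ref{cor:simpleMT} should be deduced by specialising a general asymptotic formula for $\sum_{n\le X}\Lambda(n)\Lambda(n+h)$ --- one valid for $X$ ranging from a small fixed power of $q$ up to roughly $q^{\eta^{1-\eps}}$, with an error term given explicitly in terms of $V=\log_q X$, $q$ and $\eta$ --- to the two sub-ranges $V\in[10,10\log\eta]$ and $V\in[10\log\eta,\eta^{1-\eps}]$. This last step is elementary bookkeeping. In the first regime the generic error should be a genuine power of $\eta$ below $X$ (morally $\ll X\eta^{-c}$, the Siegel zero improving the relevant zero-free region by a factor $\asymp\log\eta$), and $X\eta^{-c}\le X\exp(-C\sqrt{\log\eta})$ as soon as $\eta$ is large in terms of $C$ (the bound being trivial otherwise), giving part~(i). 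In the second regime the error should be dominated by the cost $\asymp X\bigl(1-X^{\beta_0-1}\bigr)=X\bigl(1-e^{-V/\eta}\bigr)\asymp XV/\eta$ of passing from $\Lambda$ to its $\chi$-twist (see below), inflated by some logarithmic factors, so it is $\asymp X\log^6\eta/(\eta/V)$; this also explains the barrier $V\asymp\eta$, past which $X^{\beta_0}=Xe^{-V/\eta}$ ceases to be $o(X)$ and the Siegel-zero secondary term overwhelms $X\mathfrak{S}_h$. So the content lies in the general formula, which I would attack as follows.

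The only genuine input from~\eqref{eq:szeroassumption} is the classical consequence that a Siegel zero forces $L(1,\chi)$ to be minuscule and, via Deuring--Heilbronn repulsion, pushes every other zero of $L(s,\psi)$ with $\psi$ of small modulus well away from $s=1$; quantitatively, with $\psi(x;d,a):=\sum_{n\le x,\,n\equiv a\Mod{d}}\Lambda(n)$, this yields
\[
\psi(x;d,a)=\frac{x}{\varphi(d)}-\mathbf{1}_{q\mid d}\,\chi(a)\,\frac{x^{\beta_0}}{\beta_0\,\varphi(d)}+O\!\left(\frac{x\,E(x)}{\varphi(d)}\right),\qquad (a,d)=1,
\]
with a good $E(x)$, uniform for $x$ up to about $q^{\eta^{1-\eps}}$ and for $d$ in an unusually long range (at least a large power of $q$, and in fact almost up to $x$). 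In effect this is a Bombieri--Vinogradov theorem for \emph{individual} moduli far beyond $X^{1/2}$, which is exactly what a binary correlation needs. A useful consequence is that $\sum_{n\le x}\Lambda(n)(1+\chi(n))\ll x(1-x^{\beta_0-1})+xE(x)$ is small, so $\Lambda(n)$ is close in $L^{1}$ (on average) to $-\Lambda(n)\chi(n)$, the difference $\Lambda(n)(1+\chi(n))$ being non-negative with small mean.

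Next I would decompose one factor $\Lambda(n+h)$ by a combinatorial identity (Vaughan's suffices), turning $\sum_{n\le X}\Lambda(n)\Lambda(n+h)$ into ``Type~I'' pieces $\sum_{d}\alpha_d\,\psi(X;d,-h)$ with $d$ in a long range and bilinear ``Type~II'' pieces $\sum_{d,m}\alpha_d\beta_m\!\!\sum_{\substack{n\le X\\ dm=n+h}}\!\!\Lambda(n)$ with $dm\asymp X$ and the complementary variable $m$ short. In the Type~I pieces one inserts the displayed estimate: the main terms $X/\varphi(d)$, summed against the M\"obius weights over squarefree $d$ coprime to $h$, reconstruct the singular series $X\mathfrak{S}_h$ (exactly the Euler product the $\varphi$'s generate, consistent with $\mathfrak{S}_h\asymp\mathbf{1}_{2\mid h}h/\varphi(h)$); the Siegel-zero secondary terms occur only for $q\mid d$ and therefore carry a saving $1/\varphi(d)\le1/\varphi(q)\ll q^{-1+o(1)}\ll X^{-c}$, so they are negligible; and the $E$-terms are harmless. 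In the Type~II pieces --- where the moduli are too large for $\Lambda$ to be equidistributed in a single progression --- I would invoke the $L^{1}$-closeness above to bring in the character $\chi$, paying the $\asymp XV/\eta$ error, after which the bilinear structure together with cancellation of character sums (Weil's bound entering for the sums attached to the quadratic polynomial $n(n+h)$) delivers the required saving.

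I expect the main obstacle to be exactly this Type~II (bilinear) range: a crude treatment of $\sum_{d>q^{O(1)}}\alpha_d\psi(X;d,-h)$ loses powers of $\log X$, so one must genuinely extract cancellation from the character $\chi$ and from the bilinear shape, and it is here that the admissible window $q^{10}\le X\le q^{\eta^{1-\eps}}$ --- both the small exponent $10$ and the near-optimal exponent $\eta^{1-\eps}$ --- and the final quality of $E(x)$, hence the two error terms in Corollary~\ref{cor:simpleMT}, are really decided. Everything afterwards, namely splitting into the two regimes and simplifying the error, is routine.
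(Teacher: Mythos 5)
Your top-level reduction is correct, and it is how the paper actually argues: Corollary~\ref{cor:simpleMT} follows from Theorem~\ref{th:MTzero} by elementary bookkeeping, using $\eta\ll_\varepsilon q^\varepsilon$ from Siegel's theorem to compare the three error terms (for $V\le 10\log\eta$ the $\exp(-C\sqrt{V\log\eta})$ term dominates and gives $\exp(-C'\sqrt{\log\eta})$; for $V\ge 10\log\eta$ one has $1/\eta\gg\exp(-\sqrt{\log X})$, so $V\log^6\eta/\eta$ dominates). But the route you sketch to the general theorem is not the paper's, and it has a genuine gap at exactly the point you flag.

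Your plan applies Vaughan's identity to one factor $\Lambda(n+h)$ only, keeping the other $\Lambda(n)$, which produces Type~II bilinear pieces $\sum_{d,m}\alpha_d\beta_m\sum_{dm=n+h}\Lambda(n)$. You never explain how to estimate these, and neither the ``$L^1$-closeness'' of $\Lambda$ to $-\Lambda\chi$ nor Weil's bound on character sums attached to $n(n+h)$ produces the required bilinear cancellation: this is exactly the parity obstruction that makes twin primes hard unconditionally, and a Siegel zero does not give you bilinear estimates for $\Lambda$. If instead you try to dissolve the bilinear piece by further decomposition, you land on a ternary divisor correlation, which (as the paper notes, following Friedlander--Iwaniec~\cite{Illusory}) is open. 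The paper avoids this entirely by a different mechanism: it substitutes $\Lambda\approx\chi*\log=:\lambda'$ on \emph{both} factors at once and restricts both $n$ and $n+h$ to $z$-rough numbers. The substitution error over rough numbers is controlled by Lemmas~\ref{le:cncor} and~\ref{le:exzero}, where the Siegel zero is used only through the smallness of $\sum_{z\le m,\,(m,P(z))=1}\lambda(m)/m$ with $\lambda=1*\chi$ (this is the origin of the $V\log^6\eta/\eta$ term). What remains is a correlation of the divisor-type functions $\chi*\log$ over rough numbers, which is a pure Type~I problem: the rough-number conditions are opened with the fundamental lemma of the $\beta$-sieve (Lemma~\ref{le:Sieve}), and the resulting divisor sums are treated via Poisson summation and Weil's bound for \emph{Kloosterman} sums (Lemmas~\ref{le:Poisson} and~\ref{le:kloosterman}), yielding Proposition~\ref{prop:typeI2}. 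No bilinear/Type~II estimate is ever used. That complete elimination of $\Lambda$ in favor of low-complexity divisor functions --- not improved equidistribution of $\Lambda$ beyond the usual moduli --- is the missing idea in your outline.
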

Note that for $X \in [q^{10\log \eta}, q^{10\log^4 \eta}]$ the error term is $O(\eta^{-1}X\log^{10} \eta)$, where the dependency on $\eta$ is best that can be hoped for apart from the power of $\log \eta$.

Our main results are uniform with respect to $h$ which allows us to attack also the Goldbach conjecture. There has been recent activity on the relation between Siegel zeros and the Goldbach conjecture, see e.g.~\cite{F-ISiegGold,G-SSiegGold}. The asymptotic form of Goldbach's conjecture claims that, for all $h \geq 4$,
\begin{equation}
\label{eq:GoldAsymp}
\sum_{\substack{n_1, n_2 \leq h \\ n_1 + n_2 = h}} \Lambda(n_1) \Lambda(n_2) = (1+o(1)) \mathfrak{S}_h \cdot h.
\end{equation}
We show that if a weak form of~\eqref{eq:GoldAsymp} holds, then there cannot be Siegel zeros.
\begin{corollary}
\label{cor:Goldbach}
Let $\delta > 0$. There exists $\eta = \eta(\delta) \geq 100$ such that the following holds. Let $q \geq 2$ be such that there exists a primitive quadratic character $\chi \pmod{q}.$ Assume that there exists an even $h \in [q^{10}, q^{\eta^{99/100}}]$ such that $q \mid h$ and
\begin{equation}
\label{eq:WeakGoldbach}
\delta \mathfrak{S}_h \cdot h \leq \sum_{\substack{n_1, n_2 \leq h \\ n_1 + n_2 = h}} \Lambda(n_1) \Lambda(n_2) \leq (2-\delta) \mathfrak{S}_h \cdot h.
\end{equation}
Then the Dirichlet $L$-function $L(s, \chi)$ does not have an exceptional zero $\beta_0$ with
\[
\beta_0 \geq 1-\frac{1}{\eta \log q}.
\]
\end{corollary}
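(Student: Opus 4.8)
The plan is to argue by contradiction, invoking our main theorem on $\sum_{n\le X}\Lambda(n)\Lambda(-n+h)$ — which, as promised in the abstract, is uniform for $h = O(X)$ — applied with the $-$ sign and $X=h$. So suppose that $L(s,\chi)$ has an exceptional zero $\beta_0 \ge 1 - 1/(\eta\log q)$, and write $V := \log h/\log q$, so that $V \in [10,\eta^{99/100}]$ by hypothesis. The first step is the trivial identity
\[
\sum_{\substack{n_1,n_2 \le h\\ n_1+n_2=h}} \Lambda(n_1)\Lambda(n_2) \;=\; \sum_{n\le h}\Lambda(n)\Lambda(-n+h),
\]
which holds because the term $n=h$ contributes $\Lambda(0)=0$; this puts the Goldbach sum in exactly the form handled by the main theorem. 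The constraint $h \ge q^{10}$ is what makes that theorem applicable, while $h \le q^{\eta^{99/100}}$ keeps $V/\eta$ small, which will be decisive at the end.

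The crux is to pin down the main term of $\sum_{n\le h}\Lambda(n)\Lambda(-n+h)$ when $q\mid h$. Because $q\mid h$ the companion variable satisfies $h-n\equiv -n\pmod q$, so $\chi(h-n)=\chi(-1)\chi(n)$; and under the Siegel zero the primes up to $\approx h$ are supported, with roughly doubled density, on the residues $a\bmod q$ with $\chi(a)=-1$. Quantitatively the local factor of the main term at the modulus $q$ is governed by
\[
\frac{1}{\varphi(q)}\sum_{\substack{a\bmod q\\ (a,q)=1}}\bigl(1-\chi(a)\theta\bigr)\bigl(1-\chi(-1)\chi(a)\theta\bigr) \;=\; 1+\chi(-1)\,\theta^2,\qquad \theta := h^{\beta_0-1}=\exp(-V/\eta),
\]
using $\sum_{(a,q)=1}\chi(a)=0$ and $\chi(a)^2=1$ for $(a,q)=1$ (the same identity holding verbatim for composite $q$, since $q\mid h$ forces $(h-n,q)=1\Leftrightarrow(n,q)=1$). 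As the Siegel bias disturbs only the class of $\chi$, the remaining Euler factors reassemble into the unconditional singular series $\mathfrak{S}_h$, and the main theorem yields
\[
\sum_{n\le h}\Lambda(n)\Lambda(-n+h) \;=\; \bigl(1+\chi(-1)\,\theta^2\bigr)\,\mathfrak{S}_h\,h \;+\; O\!\left(\frac{h\log^6\eta}{\eta/V}\right),
\]
the error term (as in Corollary~\ref{cor:simpleMT}(ii), but now uniform in $h$) being $o(\mathfrak{S}_h h)$ as $\eta\to\infty$ because $V\le\eta^{99/100}$ and $\mathfrak{S}_h \asymp h/\varphi(h)\gg 1$ as $h$ is even.

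To conclude I would observe that $\theta=\exp(-V/\eta)=1-O(\eta^{-1/100})$, hence $\theta^2=1-O(\eta^{-1/100})$. If $\chi(-1)=1$ the displayed sum equals $(2+o(1))\mathfrak{S}_h h$, which exceeds $(2-\delta)\mathfrak{S}_h h$ once $\eta$ is large enough in terms of $\delta$, contradicting the upper bound in~\eqref{eq:WeakGoldbach}. If $\chi(-1)=-1$ then $1+\chi(-1)\theta^2=1-\theta^2\ll\eta^{-1/100}$, so the sum is $o(\mathfrak{S}_h h)$, which falls below $\delta\mathfrak{S}_h h$ once $\eta$ is large in terms of $\delta$, contradicting the lower bound in~\eqref{eq:WeakGoldbach}. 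Taking $\eta(\delta)$ to be the larger of these two thresholds (and at least $100$) completes the proof.

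I expect the only genuine obstacle to be the main theorem itself; granting it, this deduction is a couple of lines. The two points in the deduction that need care are that the error term be uniform in $h$, so that $\eta(\delta)$ depends on $\delta$ alone and not on the chosen $h$, and the simple but decisive observation that the only two possible values $1+\chi(-1)\in\{0,2\}$ of the local correction at $q$ both lie outside the interval $[\delta,2-\delta]$ — so no ``weak Goldbach'' asymptotic of the shape~\eqref{eq:WeakGoldbach} can coexist with a Siegel zero.
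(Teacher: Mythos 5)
Your proposal is correct and follows the paper's route: it invokes Theorem~\ref{th:Goldbach}, observes that $q\mid h$ collapses the $q$-dependent factor to $1+\chi(-1)\in\{0,2\}$, and uses the error bound together with $\mathfrak{S}_h\asymp h/\varphi(h)\gg 1$ to contradict one of the two inequalities in~\eqref{eq:WeakGoldbach}. The explicit $\theta^2$ in your local-factor heuristic does not appear in the theorem's stated main term (which is exactly $1+\chi(-1)$ when $q\mid h$, the $\theta$-dependence having been absorbed into the error), but your observation $\theta^2=1+O(\eta^{-1/100})$ makes this immaterial.
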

This improves on a recent result of Friedlander, Goldston, Iwaniec and Suriajaya~\cite{F-ISiegGold} who got a similar conclusion assuming that~\eqref{eq:WeakGoldbach} holds for several $h \equiv 0 \pmod{q}$. In fact, our result is even stronger and we only need the lower bound in~\eqref{eq:WeakGoldbach} if $\chi(-1)=-1$ and similarly only the upper bound in~\eqref{eq:WeakGoldbach} if $\chi(-1)=1$.

We now turn to the precise statements of our main results. When we work uniformly with respect to $h$, we get an additional main term when $(h, q)$ has size close to $q$. To explain why, let us consider the most transparent case $h = q$. It is well known that if there is a Siegel zero, then the primes fail to be equidistributed $\pmod{q}$, more precisely we have when $q, \chi,$ and $\beta_0$ are as in Corollary~\ref{cor:simpleMT} with $\eta$ large (see e.g.~\cite[Theorem 5.27]{IwKo}),
\[
\sum_{\substack{n \leq x \\ n \equiv a \pmod{q}}} \Lambda(n) = \frac{x}{\varphi(q)}\left(1 - \frac{\chi(a)}{\beta_0 x^{1-\beta_0}}\right) + O\left(x \exp\left(\frac{-c \log x}{\sqrt{\log x} + \log q}\right) \log^4 q \right). 
\]
Hence, when $\eta$ is large, the residue classes $a \pmod{q}$ with $\chi(a) = -1$ contain about twice as many primes as one would expect, whereas the residue classes $a \pmod{q}$ with $\chi(a) = 1$ contain very few primes. Consequently one would expect that
\[
\sum_{n \leq X} \Lambda(n) \Lambda(n+q) \approx 2 \mathfrak{S}_q X.
\]
The following general theorem confirms this intuition.
\begin{theorem}
\label{th:MTzero}
Let $C \geq 1$ and $\varepsilon > 0$. Let $\chi$ be a primitive quadratic character modulo $q \geq 2$. Write $q = 2^r q'$ with $r \geq 0$ and $2 \nmid q'$. Assume that $L(s,\chi)$ has a real zero $\beta_0$ such that
\[
\beta_0 = 1-\frac{1}{\eta \log q}
\]
for some $\eta \geq 10$. Let $0 \neq h = O(X)$. We have, for any $X = q^V$ with $V \geq 10$,
\[
\begin{split}
&\sum_{n \leq X} \Lambda(n) \Lambda(n+h) = X\mathfrak{S}_h\left(1 + \mathbf{1}_{\varphi(2^r) \mid h} (-1)^{\frac{h}{\varphi(2^r)}} \prod_{\substack{p \mid q' \\ p \nmid h}} \frac{-1}{p-2}\right) \\
&\qquad + O_{C, \varepsilon}\left(\frac{h}{\varphi(h)} X \left(\exp(-C \sqrt{V \log \eta}) + \exp(-C(\log X)^{3/5-\varepsilon}) + \frac{V (\log \eta)^6}{\eta} \right)\right).
\end{split}
\]
\end{theorem}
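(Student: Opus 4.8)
The plan is to combine a combinatorial decomposition of $\Lambda$ with two inputs furnished by the hypothesis. The \emph{first input} is the classical consequence of a Siegel zero that $\Lambda(n)+\chi(n)\Lambda(n)$, the von Mangoldt function of $\zeta(s)L(s,\chi)$, has very small sums along arithmetic progressions: expanding $\sum_{n\le X,\ n\equiv a\,(d)}(\Lambda(n)+\chi(n)\Lambda(n))$ into Dirichlet characters modulo $dq$, a main term survives only for the principal character and for the character induced by $\chi$, where it is a constant times $X-X^{\beta_0}/\beta_0=X(1-e^{-V/\eta})+O(X/(\eta\log q))$, of size $O(XV/\eta)$ when $V\le\eta$, while the remaining characters contribute only an unconditional error, which by the Deuring--Heilbronn repulsion of zeros away from $\beta_0$ can be made $O(X\exp(-C\sqrt{V\log\eta}))$ after truncating Perron at a height polynomial in $q$. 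The \emph{second input} is a Vinogradov--Korobov-quality prime number theorem in progressions to the (small) moduli that will occur, together with Landau's theorem on the uniqueness of the exceptional zero, so that no modulus other than the multiples of $q$ feels an exceptional zero; this is the source of the term $\exp(-C(\log X)^{3/5-\eps})$.

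First I would insert a Heath-Brown--Vaughan type identity of bounded depth (chosen in terms of $\eps$) to write $\sum_{n\le X}\Lambda(n)\Lambda(n+h)$ as $O_\eps(1)$ many Type~I sums $\sum_{d\le D}a_d\sum_{n\le X,\ d\mid n}\Lambda(n+h)$ and Type~II bilinear sums $\sum_m\sum_d a_d b_m\Lambda(dm+h)$, with divisor-bounded coefficients, $D$ a suitable power of $X$, and the prime-power contributions and those with $(n,q)>1$ or $(n+h,q)>1$ discarded as $O((\log X)^{O(1)})$. In a Type~I sum the inner sum counts primes in the residue class $h\pmod d$ over an interval of length $\asymp X$. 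If $q\nmid d$ the exceptional zero is invisible to modulus $d$ and the second input evaluates it as $\mathbf 1_{(d,h)=1}X/\varphi(d)$ with error $O((X/\varphi(d))\exp(-C(\log X)^{3/5-\eps}))$; reassembling these with the weights coming from the identity produces the singular series $X\mathfrak S_h$. If instead $q\mid d$ and $(h,q)=1$, there is an extra, not-small, term $-\chi(h)X^{\beta_0}/(\varphi(d)\beta_0)$ from the exceptional zero (for $(h,q)>1$ there is no such term, consistently with the support of the correction factor); summing this over the admissible $d$ and carrying out a local Euler-product computation at the primes dividing $q$ --- the $2$-adic bookkeeping producing $\mathbf 1_{\varphi(2^r)\mid h}(-1)^{h/\varphi(2^r)}$ and each odd $p\mid q'$ with $p\nmid h$ contributing $-1/(p-2)$ in place of the $1/(p-2)$ already present in $\mathfrak S_h$ --- yields exactly the stated correction, once $X^{\beta_0}$ is replaced by $X$ at the cost of an $O(X\mathfrak S_h\cdot V/\eta)$ error.

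The step I expect to be the main obstacle is the Type~II bilinear sums, since in the presence of the exceptional zero no unconditional dispersion estimate is available. Following Heath-Brown's philosophy one uses the Siegel zero a second time: writing $\Lambda(n)=-\chi(n)\Lambda(n)+(\Lambda(n)+\chi(n)\Lambda(n))$ and applying this with $n=dm+h$, the $(\Lambda+\chi\Lambda)$ part is controlled by the first input (expand into characters modulo $q$, apply Perron, and use that $dm$ runs over a progression to modulus $d$), while in the $\chi(dm+h)$-twisted part one absorbs the value of $\chi\bmod q$ into the residue-class decomposition and is again reduced to the first input together with the second input for the remaining generic modulus; the divisor-bounded coefficients absorb all $\eps$-losses and powers of $\log X$ and $\log\eta$. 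Throughout, the coprimality conditions at the prime factors of $h$ must be kept explicit, so that every main term and every error carries the factor $h/\varphi(h)\asymp\mathfrak S_h$; this is what makes the result uniform for $h=O(X)$, using that $\Lambda(n+h)$ with $n+h\in[h,O(X)]$ is still a sum over primes up to $O(X)$ and that $\mathfrak S_h\asymp\mathbf 1_{2\mid h}h/\varphi(h)$.

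Finally, collecting the main terms from the Type~I and Type~II analyses and carrying out the singular-series/Euler-product computation gives the asymptotic, with the three error contributions as identified: $\exp(-C\sqrt{V\log\eta})$ from the neighbourhood of $\beta_0$ and the Deuring--Heilbronn region, $\exp(-C(\log X)^{3/5-\eps})$ from the Vinogradov--Korobov region for the generic moduli, and $V(\log\eta)^6/\eta$ from the residual size $1-e^{-V/\eta}$ of the exceptional main term; adjusting $C$ and $\eps$ absorbs the accumulated logarithmic factors.
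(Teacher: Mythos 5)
Your proposal takes a genuinely different route from the paper, but the Type~II piece of the argument --- which you yourself flag as the main obstacle --- has a real gap.

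\medskip

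\textbf{What the paper does.} Matom\"aki and Merikoski never touch a Type~II sum. Following Friedlander--Iwaniec they set $\lambda=1*\chi$, $\lambda'=\chi*\log$, so $\lambda'=\lambda*\Lambda$, and after restricting both $n$ and $\pm n+h$ to be coprime to $P(z)$ they replace $\Lambda$ by $\lambda'$ on rough numbers; the error is $\sum_{n=km,\,m\ge z}\Lambda(k)\lambda(m)$, which is small because $\lambda(m)$ is lacunary for $m\ge z$ under the Siegel-zero hypothesis (Lemma~\ref{le:exzero}), and is controlled by Henriot's correlation bound (Lemmas~\ref{le:Henriot}/\ref{le:cncor}). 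The remaining main object is $\sum_{(n(\pm n+h),P(z))=1}\lambda'(n)\lambda'(\pm n+h)$, a shifted convolution of two divisor-like functions supported on rough numbers. This is handled in Proposition~\ref{prop:typeI2}: the rough conditions are opened with $\beta$-sieve weights (Lemma~\ref{le:Sieve}), the remaining sums are all effectively Type~I, and Poisson summation produces (i)~a $k=0$ main term whose arithmetic factor is a \emph{complete} character sum modulo~$q$ evaluated exactly by Lemma~\ref{le:Chib2} (this is where the $\mathbf1_{\varphi(2^r)\mid h}(-1)^{h/\varphi(2^r)}\prod_{p\mid q',p\nmid h}\frac{-1}{p-2}$ correction comes from) and (ii)~$k\ne0$ terms that are incomplete Kloosterman sums killed by Weil's bound. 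The residual error $\exp(-C\sqrt{V\log\eta})$ is the fundamental-lemma error $e^{-Au/3000}$ with $u\asymp_C\sqrt{V\log\eta}$, not Deuring--Heilbronn; $\exp(-C(\log X)^{3/5-\eps})$ is the Vinogradov--Korobov zero-free region for $\zeta$ used in Lemma~\ref{le:chisums} (not in a PNT for progressions), and the power of $\log\eta$ in the third error arises from accumulated $u^{O(1)}$ factors, not merely from $1-e^{-V/\eta}$.

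\medskip

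\textbf{Where your argument breaks.} You decompose $\sum\Lambda(n)\Lambda(n+h)$ via a Heath-Brown--Vaughan identity into Type~I and Type~II sums and then, inside a Type~II sum $\sum_{d,m}a_d b_m\Lambda(dm+h)$ with both $a_d,b_m$ rough, you propose to write $\Lambda=-\chi\Lambda+(\Lambda+\chi\Lambda)$ applied at $n=dm+h$ and claim both pieces are then reducible to the first and second inputs. This is circular. The $-\chi(dm+h)\Lambda(dm+h)$ piece still contains $\Lambda$, is still bilinear in $d$ and $m$ simultaneously, and twisting by $\chi$ modulo $q$ only redistributes the sum over $q$ residue classes of $dm+h$ without linearising it; you have made no progress on the dispersion problem. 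The $(\Lambda+\chi\Lambda)(dm+h)$ piece is not a sum over a single progression because $dm$ is a bilinear quantity: for a genuine Type~II sum you cannot ``fix $d$ and sum over $m$'' (that is precisely the definition of a sum that is \emph{not} Type~I). In the presence of a Siegel zero no unconditional dispersion estimate is available, as you correctly note, and your sketch does not provide one. This is exactly the difficulty the paper sidesteps by working with $\lambda'=\chi*\log$ (divisor complexity) instead of $\Lambda$ (bilinear complexity) from the outset: after the $\beta$-sieve expands the rough conditions, every remaining sum is of Type~I in the long variable, so Poisson summation applies directly and no Type~II estimate is ever needed.

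\medskip

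In short, your plan is a Heath-Brown--style combinatorial attack, not the paper's; and as written it genuinely stalls at the Type~II sums. To repair it you would need either a real bilinear dispersion estimate conditional on the Siegel zero (this is essentially the hard content of Heath-Brown's original 1983 paper) or, better, to abandon the Vaughan decomposition and adopt the $\Lambda\mapsto\chi*\log$ replacement over rough numbers, which collapses the problem to Type~I.
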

Note that the main term vanishes for some even $h$, for instance when $3 \mid q$ and $h = 2q/3$ (note that $q'$ is necessarily square-free (see e.g.~\cite[Section 3.3]{IwKo}) and so in this case $3 \nmid h$).

We get a similar result concerning Goldbach's conjecture.
\begin{theorem}
\label{th:Goldbach}
Let $C \geq 1$ and $\varepsilon > 0$. Let $\chi$ be a primitive quadratic character modulo $q \geq 2$. Write $q = 2^r q'$ with $r \geq 0$ and $2 \nmid q'$. Assume that $L(s,\chi)$ has a real zero $\beta_0$ such that
\[
\beta_0 = 1-\frac{1}{\eta \log q}.
\]
for some $\eta \geq 10$. Let $h \geq q^{10}$ be an integer. Writing $V := \frac{\log h}{\log q} \geq 10$, we have
\[
\begin{split}
&\sum_{\substack{n_1, n_2 \leq h \\ n_1 + n_2 = h}} \Lambda(n_1) \Lambda(n_2) = h\mathfrak{S}_h\left(1 + \chi(-1)\mathbf{1}_{\varphi(2^r) \mid h} (-1)^{\frac{h}{\varphi(2^r)}} \prod_{\substack{p \mid q' \\ p \nmid h}} \frac{-1}{p-2}\right) \\
&\qquad + O_{C, \varepsilon}\left(\frac{h}{\varphi(h)} h \left(\exp(-C \sqrt{V \log \eta}) + \exp(-C(\log h)^{3/5-\varepsilon}) + \frac{V (\log \eta)^6}{\eta} \right)\right).
\end{split}
\]
\end{theorem}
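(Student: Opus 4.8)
The plan is to run the proof of Theorem~\ref{th:MTzero} with the linear form $n\mapsto -n+h$ in place of $n\mapsto n+h$ — this is the $-$ case implicit in the abstract's $\Lambda(n)\Lambda(\pm n+h)$, and, as in Heath--Brown's original treatment, the whole argument is really set up for a general pair of linear forms. Concretely, the change of variables $n_1=n$, $n_2=h-n$ gives
\[
\sum_{\substack{n_1, n_2 \le h \\ n_1 + n_2 = h}}\Lambda(n_1)\Lambda(n_2)=\sum_{1 \le n \le h-1}\Lambda(n)\Lambda(h-n),
\]
and, after discarding the prime-power terms and the ranges where $n$ or $h-n$ is small (a total of $O(h^{0.99}\log^2 h)$ by Chebyshev, hence absorbed into the claimed error term), this is, up to negligible terms, a correlation $\sum_{n\le X}\Lambda(n)\Lambda(-n+h)$ with $X=h$ and $V=\log h/\log q$. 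So it suffices to establish the $-n+h$ analogue of Theorem~\ref{th:MTzero} and specialise.

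For the main term, recall that in the Siegel-zero regime $\Lambda$ is modelled by a sieve/Cram\'er part $\Lambda^\sharp$ carrying the expected density, plus a correction of size $\asymp \chi(n)\,n^{\beta_0-1}$ supported on $(n,q)=1$. Expanding $\Lambda(n)\Lambda(h-n)$, the $\Lambda^\sharp\times\Lambda^\sharp$ term reproduces $h\mathfrak{S}_h$ via the Hardy--Littlewood heuristic, while the two cross terms give the secondary main term through sums of the shape $\sum_n\chi(n)\chi(h-n)(\cdots)$. The one genuinely new bookkeeping point is that $\chi(n)\chi(h-n)=\chi(-1)\,\chi(n)\chi(n-h)$, and $\chi(n)\chi(n-h)$ has the same local densities at every prime as the $\chi(n)\chi(n+h)$ occurring for the shift $+h$: the local factors at the primes $p\mid q'$ and at the $2$-part of $q$ depend only on whether $p\mid h$, not on the sign of $h$. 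Hence the secondary main term is exactly the one in Theorem~\ref{th:MTzero} multiplied by the global constant $\chi(-1)$, which is precisely the coefficient $\chi(-1)\mathbf{1}_{\varphi(2^r)\mid h}(-1)^{h/\varphi(2^r)}\prod_{p\mid q',\,p\nmid h}\frac{-1}{p-2}$ in the statement; the actual singular-series and twisted-local-factor computation is identical to the one carried out for Theorem~\ref{th:MTzero}, and I would simply invoke it. The error terms then arise and are bounded exactly as there: $\exp(-C(\log h)^{3/5-\eps})$ from the prime-counting input for $\Lambda^\sharp\times\Lambda^\sharp$; the $\exp(-C\sqrt{V\log\eta})$ and $V(\log\eta)^6/\eta$ losses from the $\Lambda^\sharp\times(\text{Siegel correction})$ cross terms (controlled by the size of $h^{\beta_0-1}=\exp(-V/\eta)$ and the attendant truncations); the correction$\times$correction term of yet smaller order; and the $h/\varphi(h)$ tracking small prime factors of $h$ in the singular series.

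The main obstacle, exactly as for Theorem~\ref{th:MTzero}, is the uniform-in-$h$ control of the cross term $\sum_{n\le h}\chi(n)\,n^{\beta_0-1}\,\Lambda^\sharp(h-n)$: one needs equidistribution of the sieve model in arithmetic progressions twisted by $\chi$ and by the slowly varying weight $n^{\beta_0-1}$, with the shift $h$ essentially unrestricted above $q^{10}$. This is handled by the Type~I/Type~II (dispersion) estimates developed in the body of the paper for Theorem~\ref{th:MTzero}, and it is the source both of the uniformity in $h$ and of the improvement over~\eqref{eq:H-LTT}. Granting that input, the reflection $n\mapsto h-n$, the extra $\chi(-1)$, and the endpoint truncations are all routine.
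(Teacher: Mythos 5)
Your reduction of Goldbach's problem to the correlation $\sum_n \Lambda(n)\Lambda(-n+h)$, the observation that the sign change can be piggy-backed on the twin-prime argument with only cosmetic modifications, and the identification of $\chi(n)\chi(h-n)=\chi(-1)\chi(n)\chi(n-h)$ as the source of the extra $\chi(-1)$ factor are all exactly right and match what the paper does (it treats $\Lambda(n)\Lambda(\pm n+h)$ simultaneously and obtains $\chi(\pm 1)$ from Lemma~\ref{le:Chib2}\eqref{eq:chipp}). The truncation of $n_1\le h^{1-\varepsilon/4}$ and the use of $X\in[h^{1-\varepsilon/3},h/4]$ are also exactly the smoothing steps in Section~\ref{se:initial}.

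However, the mechanism you propose for generating the secondary main term is wrong, and this is not a cosmetic issue. You decompose $\Lambda = \Lambda^{\sharp} + (\text{Siegel correction})$ with the correction modelled as $\asymp \chi(n)\,n^{\beta_0-1}$, and then claim the \emph{cross terms} $\Lambda^{\sharp}\times(\text{correction})$ produce the secondary main term through $\sum_n \chi(n)\chi(h-n)(\cdots)$, while correction $\times$ correction is ``of yet smaller order.'' This is backwards. A cross term in that expansion carries a \emph{single} $\chi$, i.e.\ is of the shape $\sum_n \Lambda^{\sharp}(n)\chi(h-n)(h-n)^{\beta_0-1}$; since $\Lambda^{\sharp}$ equidistributes over reduced residues mod $q$, the local average is $\frac1q\sum_{m(q)}\chi_0(m)\chi(\pm m+h)$, which is $O(1/q)$ by Lemma~\ref{le:Chib2}\eqref{eq:chip0} — negligible. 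The $\chi(n)\chi(h-n)$ sum you write down is the \emph{correction}$\,\times\,$\emph{correction} term, and \emph{that} is what produces $\mathbf{1}_{\varphi(2^r)\mid h}(-1)^{h/\varphi(2^r)}\prod_{p\mid q',\,p\nmid h}\frac{-1}{p-2}$ via Lemma~\ref{le:Chib2}\eqref{eq:chipp}. Indeed, in the paper's actual mechanism the two main terms arise not from a Cram\'er-plus-correction splitting at all, but from two ranges of a bilinear decomposition of the divisor correlation for $\lambda' = \chi\ast\log$: the term $\mathfrak{S}_h X$ comes from $\widetilde{\Sigma}^{\pm}_{S,S}$ (where the complete sum in Proposition~\ref{prop:typeI2} carries $\chi_0\chi_0$) and the secondary term from $\widetilde{\Sigma}^{\pm}_{L,L}$ (where it carries $\chi\chi$); the mixed ranges $\Sigma^{\pm}_{S,L}$, $\Sigma^{\pm}_{L,S}$ are small precisely because of the $\chi\chi_0$ sum in~\eqref{eq:chip0}. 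So beyond mislabelling, the $\Lambda^{\sharp}$--plus--correction picture is not the decomposition the paper uses; the paper approximates $\Lambda$ by a single arithmetic function $(\chi\ast\log)\cdot\mathbf{1}_{(\cdot,qP(z))=1}$ and then works with the resulting generalized divisor correlation, which is what makes the dispersion/Kloosterman inputs tractable. You would need to either correct the bookkeeping in your expansion, or — as the paper does — abandon the Cram\'er/correction heuristic in favour of the $\chi\ast\log$ approximation and the bilinear split $\Sigma_{S,S}+\Sigma_{S,L}+\Sigma_{L,S}+\Sigma_{L,L}$.
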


Corollaries~\ref{cor:simpleMT}(i) and~\ref{cor:Goldbach} immediately follow from Theorems~\ref{th:MTzero} and~\ref{th:Goldbach} since by Siegel's theorem (see e.g.~\cite[Theorem 11.14 combined with (11.10)]{MVBook})
\begin{equation}
\label{eq:Siegeleta}
\eta \ll_\varepsilon q^\varepsilon.
\end{equation}
For $X \geq q^{10 \log \eta}$ the quantity 
\[
\frac{1}{\eta} = \exp(-\log \eta) \gg \exp(-\sqrt{(\log q) (\log \eta)}) \gg \exp(-\sqrt{\log X}),
\]
dominates $\exp(-C(\log X)^{3/5-\varepsilon})$ and $\exp(-C V \sqrt{\log \eta})$, so also Corollary~\ref{cor:simpleMT}(ii) follows from Theorem~\ref{th:MTzero}.

We will also prove a corollary concerning the distribution of primes in almost all short intervals. A probabilistic model predicts that 
\begin{equation}
\label{eq:pntalmostallshort}
\sum_{y < p \leq y+ H} 1 = (1+o(1)) \frac{H}{\log X}.
\end{equation}
holds for almost all $y \in [X,2X]$ as soon as $H/\log X \to \infty$. Selberg~\cite{Selberg} has shown this for $H/\log^2 X \to \infty$ assuming the Riemann hypothesis. Assuming also the Pair correlation conjecture~\cite{montgomery}, Heath-Brown~\cite{H-BPC} proved the result predicted by the probabilistic model. Unconditionally, the best current results are an asymptotic formula~\eqref{eq:pntalmostallshort} for almost all $y \in [X, 2X]$ when $H\geq X^{1/6+o(1)}$ (see e.g.~\cite[Theorem 9.1]{harman}), and a lower bound when $H\geq X^{1/20}$ (see Jia \cite{jia}).

Here we prove the asymptotic formula (\ref{eq:pntalmostallshort}) for almost all $y$ with $H$ in the range predicted by the probabilistic model under the unlikely assumption of existence of Siegel zeros.

\begin{corollary}
\label{cor:shortsexzero}
Let $C \geq 2$ and $\varepsilon > 0$. Let $\chi$ be a primitive quadratic character modulo $q \geq 2$ and assume that $L(s,\chi)$ has a real zero $\beta_0$ such that
\[
\beta_0 = 1-\frac{1}{\eta \log q}.
\]
for some $\eta \geq 10$.

Let $X \geq q^{10},$ write $V := \frac{\log X}{\log q} \geq 10$, and let $2 \leq H \leq X^{1/3}$. Then
\[
\begin{split}
&\int_X^{2X} \left(\sum_{y < n \leq y+H} \Lambda(n) - H \right)^2 dy  \\
&\ll_{C, \varepsilon} H X \log X + H^2 X \left(\exp(-C \sqrt{V \log \eta}) + \exp(-C(\log X)^{3/5-\varepsilon}) + \frac{V \log^6 \eta}{\eta} \right).
\end{split}
\]
\end{corollary}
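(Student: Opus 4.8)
\emph{Sketch of proof.} We open the square and reduce the variance to the correlation sums of Theorem~\ref{th:MTzero}. Put $S(y):=\sum_{y<n\le y+H}\Lambda(n)$, so that
\[
\int_X^{2X}\bigl(S(y)-H\bigr)^2\,dy=\int_X^{2X}S(y)^2\,dy-2H\int_X^{2X}S(y)\,dy+H^2X.
\]
Expanding $S(y)^2$ and interchanging summation with integration, $\int_X^{2X}S(y)^2\,dy=\sum_{n_1,n_2}\Lambda(n_1)\Lambda(n_2)\,\bigl|\{y\in[X,2X]:y<n_i\le y+H,\ i=1,2\}\bigr|$; the Lebesgue measure equals $\max(0,H-|n_1-n_2|)$ unless both $n_i$ lie within $H$ of $X$ or of $2X$, and such pairs contribute $\ll H\bigl(\sum_{X<n\le X+H}\Lambda(n)\bigr)^2+H\bigl(\sum_{2X<n\le 2X+H}\Lambda(n)\bigr)^2\ll H^3\log^2X\ll HX\log X$, using $H\le X^{1/3}$ and the trivial bound $\sum_{x<n\le x+H}\Lambda(n)\ll H\log X$. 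Performing the analogous routine bookkeeping of endpoint ranges in $\int_X^{2X}S(y)\,dy=H\sum_{X<n\le 2X}\Lambda(n)+O(H^2\log X)$ and in the off-diagonal, isolating the diagonal $H\sum_{X<n\le 2X}\Lambda(n)^2=HX\log X+O(HX\log X)$ by partial summation from the prime number theorem, and using $\sum_{X<n\le 2X}\Lambda(n)=X+O\bigl(X\exp(-c(\log X)^{3/5-\varepsilon})\bigr)$ --- valid because $\beta_0$ is a zero of $L(s,\chi)$, not of $\zeta$, and so has no effect on $\sum_{n\le x}\Lambda(n)$ --- one arrives at
\[
\int_X^{2X}\bigl(S(y)-H\bigr)^2\,dy=HX\log X-H^2X+\sum_{0<|h|<H}(H-|h|)\sum_{X<n\le 2X}\Lambda(n)\Lambda(n+h)+O\bigl(HX\log X+H^2XE\bigr),
\]
where $E:=\exp(-C\sqrt{V\log\eta})+\exp(-C(\log X)^{3/5-\varepsilon})+V\log^6\eta/\eta$ (the Vinogradov--Korobov error, carrying an absolute constant, is absorbed into the exponential term on enlarging the implied constant as a function of $C$).

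Next I would feed in Theorem~\ref{th:MTzero}: since $0<|h|<H\le X^{1/3}$ forces $h=O(X)$, applying it at $X$ and at $2X$ gives
\[
\sum_{X<n\le 2X}\Lambda(n)\Lambda(n+h)=X\mathfrak{S}_h(1+c_h)+O_{C,\varepsilon}\Bigl(\tfrac{h}{\varphi(h)}XE\Bigr),\qquad c_h:=\mathbf{1}_{\varphi(2^r)\mid h}(-1)^{h/\varphi(2^r)}\prod_{\substack{p\mid q'\\p\nmid h}}\frac{-1}{p-2}.
\]
Summed against $H-|h|$, the error is $\ll XE\sum_{0<|h|<H}(H-|h|)\tfrac{h}{\varphi(h)}\ll H^2XE$ because $\sum_{h\le H}\tfrac{h}{\varphi(h)}\ll H$. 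Using $\sum_{0<|h|<H}(H-|h|)=H^2-H$, the corollary is thus reduced to the estimate
\[
\sum_{0<|h|<H}(H-|h|)(\mathfrak{S}_h-1)+\sum_{0<|h|<H}(H-|h|)\mathfrak{S}_h c_h\ll H\log X+H^2E.
\]
The first sum is $-H\log H+O(H)\ll H\log X$ by the classical evaluation of the averaged twin-prime singular series.

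It remains to bound $\sum_{0<|h|<H}(H-|h|)\mathfrak{S}_h c_h$. Since $\chi$ is primitive and quadratic, $2^r\in\{1,4,8\}$, so $\varphi(2^r)\le4$ and $q'$ is odd and squarefree. If $H\le\varphi(2^r)$ the sum vanishes; otherwise, writing $h=\varphi(2^r)m$ (no prime dividing $q'$ divides $\varphi(2^r)$) it becomes $\sum_{0<|m|<H/\varphi(2^r)}(H-\varphi(2^r)|m|)\mathfrak{S}_{\varphi(2^r)m}(-1)^m\prod_{p\mid q',\,p\nmid m}\frac{-1}{p-2}$. Bounding $|c_h|$ trivially, splitting the $m$-sum according to $\gcd(q',m)$, and using $\mathfrak{S}_{\varphi(2^r)m}\ll m/\varphi(m)$ together with $\sum_{k\le K}k/\varphi(k)\ll K$, the $m$-sum produces by multiplicativity an Euler product over the primes dividing $q'$, giving
\[
\Bigl|\sum_{0<|h|<H}(H-|h|)\mathfrak{S}_h c_h\Bigr|\ll\frac{H^2}{\varphi(2^r)}\prod_{p\mid q'}\Bigl(\frac1{p-1}+\frac1{p-2}\Bigr).
\]
An elementary estimate shows this Euler product is $\le(q')^{-\theta}$ with $\theta=1/25$ whenever $q'\ge5$; then $q'\ge q/8$, and since $E\ge V\log^6\eta/\eta\ge\eta^{-1}$ and Siegel's theorem gives $\eta\ll_\theta q^\theta$, the right-hand side is $\ll H^2q^{-\theta}\ll H^2E$. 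If instead $q'\in\{1,3\}$ then $q\le24$, so $\eta\ll_\varepsilon1$ by Siegel's theorem and $E\ge V\log^6\eta/\eta\gg_\varepsilon V\gg\log X$, whence the bound follows from the trivial estimate $\int_X^{2X}(S(y)-H)^2\,dy\ll H^2X\log X$. This completes the proof, the implied constant depending on $C$, $\varepsilon$ (and the ineffective constant of Siegel's theorem).

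The one point that demands real work is this last bound on $\sum_{0<|h|<H}(H-|h|)\mathfrak{S}_h c_h$: everything hinges on recognising that the mean size of $|c_h|$ is governed by the Euler product $\prod_{p\mid q'}(\tfrac1{p-1}+\tfrac1{p-2})$, which is small precisely because a modulus $q$ large enough to support a Siegel zero cannot be too smooth. Notably, the sign cancellations built into $c_h$ are not needed; and the remaining ingredients --- opening the square, Theorem~\ref{th:MTzero}, and the classical singular-series average --- are entirely standard.
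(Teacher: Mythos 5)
Your sketch is correct and follows the same overall strategy as the paper: open the square, reduce to the correlation sums of Theorem~\ref{th:MTzero}, and average the singular series. The only place where your route genuinely diverges is the treatment of the extra main term $\mathfrak{S}_h c_h$ coming from the Siegel character. The paper dispatches this in one line by factoring out $\prod_{p\mid q'}\frac{1}{p-2}$, absorbing $\prod_{p\mid q',\,p\mid h}(p-2)\le (h,q')$, and invoking Lemma~\ref{le:gcdsum} to get
$\ll H\,\tau(q')^2\,\prod_{p\mid q'}\tfrac{1}{p-2}\cdot H\ll H^2/q^{1/2}$
with an \emph{absolute} implied constant (one checks $\sup_{q'}\prod_{p\mid q'}\tfrac{4\sqrt{p}}{p-2}<\infty$), and then $q^{-1/2}\ll_\eps\eta^{-1}\le E$ by Siegel. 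You instead parameterise $h=\varphi(2^r)m$, split by $(m,q')$, and arrive at the equivalent Euler product $\prod_{p\mid q'}(\tfrac1{p-1}+\tfrac1{p-2})$. That product fails to be $\le(q')^{-\theta}$ when $q'\in\{1,3\}$ (e.g.\ $q'=3$ gives $3/2>3^{-1/25}$), which forces you into the separate case $q\le24$, handled via the "trivial" bound $\int(S-H)^2\ll H^2X\log X$ (which, as you should note, itself requires the prime number theorem once to save one factor of $\log X$; the purely trivial bound gives only $H^2X\log^2X$) together with $E\gg_\eps \log X$ for bounded $q$. This case split is avoidable: with the paper's normalisation the uniform bound $\ll H^2/q^{1/2}$ already has an absolute constant, so Siegel's $\eta\ll_\eps q^\eps$ finishes it for all $q\ge2$ at once. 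Both approaches are sound, and you are right that no sign cancellation in $c_h$ is used; but the paper's gcd lemma gives a cleaner, case-free bound. The remaining items (endpoint bookkeeping, Vinogradov--Korobov absorption by shrinking $\eps$, the diagonal, $\sum_h(H-|h|)h/\varphi(h)\ll H^2$, and the averaged singular-series identity) match the paper's argument.
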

This implies that as soon as 
\[
\eta \to \infty, \quad \frac{H}{\log X} \to \infty, \quad \text{and} \quad  q^{10} \leq X \leq q^{\eta^{1-\delta}},
\] 
we get the asymptotic formula
\[
\sum_{y < p \leq y+H} 1 = (1+o(1))\frac{H}{\log y}
\]
for almost all $y \in [X, 2X]$.

\begin{remark}
Under the assumption of the existence of Siegel zeros one can show that the Pair correlation conjecture~\cite{montgomery} fails drastically, that is, almost all zeros of $\zeta(s)$ in some range depending on $q$ lie on a lattice with normalized distances in $(\frac{1}{2}+o(1)) \mathbb{Z}$ (the so-called alternative hypothesis, this follows e.g. from \cite[Proposition 9.2]{Conrey-Iwaniec} assuming $\eta=\log^{100} q$ for zeros of height $T=\exp(\log^{10} q)$). Note however that in~\cite{H-BPC}, instead of the full Pair correlation conjecture, Heath-Brown only requires a weaker upper bound for Montgomery's \cite{montgomery} function $F$, which is consistent with the alternative hypothesis.
\end{remark}

\begin{remark}
All our results hold also if instead of existence of Siegel zeros we assume that $L(s,\chi)$ takes a small value at $s=1$, that is, assuming
\[
L(1,\chi) \leq \frac{1}{\eta \log q}.
\]
Indeed, by \cite[Theorem 11.4]{MVBook} (using~\cite[(11.7)]{MVBook} in the contrapositive direction) this implies that $L(s,\chi)$ has a real zero $\beta_0$ with
\[
1-\beta_0 \ll L(1,\chi) \ll  \frac{1}{\eta \log q}.
\]

In the opposite direction the best current result loses essentially two factors of $\log q$, that is, we have $L(1,\chi) \ll (1-\beta_0)(\log^2 q)/\log \log q$ by work of Friedlander and Iwaniec~\cite{F-ISiegel}. For this reason we state our results in terms of Siegel zeros. 

Note that in many articles such as \cite{Illusory,JoriChar} one has to assume that $\chi$ is exceptional in a very strong sense, that is, $L(1,\chi) \ll \log^{-C} q$ for some large constant $C.$ Combining the ideas from this paper with the argument in \cite{JoriChar} it is possible replace this strong assumption by (\ref{eq:szeroassumption}) with a similar dependency on $\eta$ as in Corollary \ref{cor:simpleMT}. 
\end{remark}

Let us here briefly discuss why having a Siegel zero is a helpful assumption; we will discuss the proof strategy rigorously and in more detail in Section~\ref{se:initial}.

If $L(s, \chi)$ has a zero at $s = \beta_0$ with $\beta_0$ close to $1$, then
\[
L(1,\chi)^{-1} = \sum_{n} \frac{\mu(n)\chi(n)}{n} = \prod_p \left(1+\frac{\mu(p)\chi(p)}{p} \right)
\]
is large, so that $\chi(p)=\mu(p)$  for most primes (in a wide range depending on $q$) and heuristically we have
\begin{equation}
\label{eq:LambdaApprox}
\Lambda = \mu \ast \log  \approx \chi \ast \log,
\end{equation}
so that we can hope to replace $\Lambda$ by a $\chi \ast \log$. But the function $\chi \ast \log$ is of similar complexity as the divisor function $\tau=1\ast 1$ when the modulus $q$ is small compared to $x$.

Hence, in order to prove our theorems, we need to show that the contribution of the error in the approximation~\eqref{eq:LambdaApprox} is small as well as study 
\[
\sum_{n \leq X} (\chi \ast \log)(n)(\chi \ast \log)(\pm n+h)
\]
which has similar complexity as divisor correlations. Friedlander and Iwaniec \cite{Illusory} have shown that the error in~\eqref{eq:LambdaApprox} can be controlled if we can solve the corresponding ternary divisor problem. In our case we cannot, but we can still deal with the error, once we restrict both sides to numbers without large prime factors. In general, for sequences with relatively large (logarithmic) density one can exploit crude bounds to get a result without knowledge of the corresponding ternary divisor problem.

\subsection*{Notation} 
\label{sse:Notation}
We write $\mathbf{1}_P$ for the indicator function of the claim $P$. We write $\Lambda(n)$, $\mu(n)$, $\varphi(n)$, $\tau(n)$ for the von Mangoldt function, M\"obius function, Euler $\varphi$-function and the divisor function. These functions are understood to equal $0$ for non-positive integers. For arithmetic functions $f, g \colon \mathbb{N} \to \mathbb{C}$ we define the Dirichlet convolution 
\[
(f \ast g)(n) := \sum_{n = km} f(k)g(m).
\]

For $f \colon \mathbb{R} \to \mathbb{C}$ and $g\colon \mathbb{R} \to \mathbb{R}^+$, we write $f(x) = O(g(x))$ or $f(x) \ll g(x)$ if there exists a constant $C > 0$ such that $|f(x)| \leq C g(x)$ for every $x$. Furthermore, for positive valued $f$ and $g$ we write $f(x) \asymp g(x)$ when $g(x) \ll f(x) \ll g(x)$.  If there is a subscript (e.g. $O_k(g(x))$), then the implied constant is allowed to depend on the parameter(s) in the subscript.

We say that a function $g \colon \mathbb{R} \to \mathbb{R}$ is smooth if it has derivatives of all orders. 

For $u \in \mathbb{C}$ we write $e(u) := e(2\pi i u)$ and for $q \in \mathbb{N}$ we write $e_q(u) = e(u/q)$. For any function
\[
g \in \mathbb{L}^1(\mathbb{R}) := \left\{f \colon \mathbb{R} \to \mathbb{C} \colon \int_{-\infty}^\infty |f(x)| dx < \infty\right\},
\]
we denote by $\widehat{g}$ the Fourier transform
\[
\widehat{g}(\xi) = \int_{-\infty}^\infty g(x) e(-\xi x) dx.
\]

For $a \in \mathbb{Z}$ and $q \in \mathbb{N}$ we write $\overline{a}$ for the inverse of $a \pmod{q}$ (the modulus will be clear from the context, e.g. in $e(\frac{c\overline{u}}{v})$ the inverse is $\pmod{v}$).

\section{Initial steps} 
\label{se:initial}
We start by replacing the sums in Theorems \ref{th:MTzero} and~\ref{th:Goldbach} by smoothed variants. Let $\delta= X^{-\eps}$ for some small $\eps >0$ and let $g:\mathbb{R} \to [0,1]$ be a smooth function that is supported on $[1,2]$ and equals $1$ on $[1+\delta,2-\delta]$. Assume further that the derivatives of $g$ satisfy $g^{(j)}(x) \ll \delta^{-j}$ for every $x$.

In case of Theorem~\ref{th:MTzero} we first decompose the summation condition $n \leq X$ dyadically into conditions $n \in (x, 2x]$ with $x \leq X/2$. We estimate the contribution of $x \leq X^{1-\varepsilon/4}$ trivially, and for the remaining $x$ we replace the condition $\mathbf{1}_{n \in (x, 2x]}$  by $g(n/x)$ with an error term $O(\delta x \log^2 X) = O(X^{1-\eps/2})$. Thus it suffices to show that
\[
\begin{split}
&\sum_{n} g\left(\frac{n}{X}\right)\Lambda(n) \Lambda(n+h)= \int g\left(\frac{y}{X}\right) dy \cdot \mathfrak{S}_h\left(1 + \mathbf{1}_{\varphi(2^r) \mid h} (-1)^{\frac{h}{\varphi(2^r)}} \prod_{\substack{p \mid q' \\ p \nmid h}} \frac{-1}{p-2}\right) \\
&\qquad + O_{C, \varepsilon}\left(\frac{h}{\varphi(h)} X \left(\exp(-C \sqrt{V \log \eta}) + \exp(-C(\log X)^{3/5-\varepsilon}) + \frac{V (\log \eta)^6}{\eta} \right)\right).
\end{split}
\]
whenever $0 < h \leq X^{1+\varepsilon/2}$ (we can assume that $h$ is positive by symmetry). 

In case of Theorem~\ref{th:Goldbach}, note that by symmetry we can concentrate on the case $n_1 < h/2 < n_2$  and that when $n_1 \leq h^{1-\varepsilon/4}$ we can use a trivial estimate. Arguing as above with similar $g$, we see that it suffices to show that
\[
\begin{split}
&\sum_{n} g\left(\frac{n}{X}\right)\Lambda(n) \Lambda(h-n) = \int g\left(\frac{y}{X}\right) dy \cdot \mathfrak{S}_h\left(1 + \chi(-1)\mathbf{1}_{\varphi(2^r) \mid h} (-1)^{\frac{h}{\varphi(2^r)}} \prod_{\substack{p \mid q' \\ p \nmid h}} \frac{-1}{p-2}\right) \\
&\qquad + O_C\left(\frac{h}{\varphi(h)} X \left(\exp(-C V \sqrt{\log \eta}) + \exp(-C(\log h)^{3/5-\varepsilon}) + \frac{V (\log \eta)^6}{\eta} \right)\right). 
\end{split}
\]
for every $X \in [h^{1-\varepsilon/3}, h/4]$.

We shall deal with Theorems~\ref{th:MTzero} and~\ref{th:Goldbach} simultaneously, and thus consider
\[
\sum_{n} g\left(\frac{n}{X}\right)\Lambda(n) \Lambda(\pm n+h).
\]
Note that when $n$ is in the support of $g(n/X)$ with $X$ as above, we have $\pm n + h \geq \max\{X, h/2\}.$

Let $\chi$ be a quadratic character modulo $q$. Following~\cite{Illusory, JoriChar}, we write
\begin{equation}
\label{eq:deflamlam'}
\lambda := 1 \ast \chi \quad \text{and} \quad \lambda' := \chi \ast \log, 
\end{equation}
so that
\[
\lambda \ast \Lambda = (1 \ast \chi) \ast (\mu \ast \log) = (1 \ast \mu) \ast (\chi \ast \log) = \lambda'.
\]
Note also that $\lambda(n) \geq 0$ and $\lambda'(n) \geq \Lambda(n) \geq 0$ (since $\lambda' = \lambda \ast \Lambda$). 
By above
\begin{equation}
\label{eq:lambda'-Lambda1}
\lambda'(n) = (\lambda \ast \Lambda)(n) =  \Lambda(n) + \sum_{\substack{n = km \\ m > 1}} \Lambda(k) \lambda(m),
\end{equation}
so we have obtained a formula for the error term in the approximation~\eqref{eq:LambdaApprox}. Similarly as in \cite{JoriChar}, we now restrict this approximation to rough numbers to ensure that $m$ is large. For large $m$ we expect by the assumption of existence of Siegel zeros  that the function $\lambda(m)$ is lacunary, which will make the sum in~\eqref{eq:lambda'-Lambda1} small.

For $w \geq 2$ and $k \in \mathbb{N}$, write
\[
P(w) = \prod_{p < w} p \quad \text{and} \quad P_k(w) = \prod_{\substack{p < w \\ p \nmid k}} p.
\]
Let $u$ be a large parameter to be chosen later (see \eqref{eq:uchoice}) and write $z := X^{1/u}$. Adding  the condition $(n, qP(z)) = 1$ to both sides of~\eqref{eq:lambda'-Lambda1}, we get 
\begin{equation}
\label{eq:lambda'-Lambda}
\Lambda(n) = \lambda'(n)\mathbf{1}_{(n, q P(z)) = 1} - \sum_{\substack{n = km \\ m \geq z \\ (km, q P(z)) = 1}} \Lambda(k) \lambda(m) + O\left(\log n \cdot \mathbf{1}_{\substack{n = p^\nu \\ p \mid qP(z)}}\right).
\end{equation}
We define
\begin{equation}
\label{eq:cndef}
c_n := \sum_{\substack{n = km \\ m \geq z \\ (km, P(z)) = 1}} \Lambda(k) \lambda(m). 
\end{equation}
Note that $0 \leq \lambda'(n) \leq \tau(n) \log n$ and $0 \leq c_n \leq \mathbf{1}_{(n, P(z)) = 1} \tau(n)^2 \log n$. When $g \colon \mathbb{R} \to [0,1]$ is a smooth function supported on $[1, 2]$, we obtain
\begin{equation}
\label{eq:Lnn+hSplit}
\begin{split}
&\sum_{n} g\left(\frac{n}{X}\right) \Lambda(n) \Lambda(\pm n+h) = \sum_{\substack{n \\ (n(\pm n+h), q P(z)) = 1}}  g\left(\frac{n}{X}\right) \lambda'(n) \lambda'(\pm n+h) \\
& \quad + O\left((z + \omega(q)) \log^2 X + \log X \sum_{\substack{n \leq 2X \\ (n(\pm n+h), q P(z)) = 1}} (c_n \tau(\pm n+h)^2 + \tau(n) c_{\pm n+h})\right).
\end{split}
\end{equation}
We deal with the error term using the following lemma which will quickly follow from Henriot's bound on correlations of multiplicative functions (see Section~\ref{ssec:multfunct} for the proof).
\begin{lemma}
\label{le:cncor}
Let $c_n$ be as in~\eqref{eq:cndef}, let $X \geq 3, u \geq 2$, and $z = X^{1/u}$. Then, for any $0 < |h| \leq X^{10}$, we have
\[
\begin{split}
\sum_{\substack{n \leq 4X \\ (n(\pm n+h), P(z)) = 1}} c_n \tau(\pm n+h)^2 \ll \frac{h}{\varphi(h)} \frac{X}{\log X} \left(u^4 \sum_{\substack{z \leq m \leq 4X/z \\ (m,P(z))=1}} \frac{\lambda(m)}{m} + \frac{u^{10}}{z}\right).
\end{split}
\]
\end{lemma}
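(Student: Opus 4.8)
The plan is to bound $\sum_{n \le 4X,\ (n(\pm n+h),P(z))=1} c_n \tau(\pm n+h)^2$ by unfolding the definition of $c_n$ and reducing to a correlation sum amenable to Henriot's theorem on shifted convolutions of multiplicative functions. First I would write $c_n = \sum_{n=km,\ m \ge z,\ (km,P(z))=1} \Lambda(k)\lambda(m)$ and interchange the order of summation, so that the target sum becomes
\[
\sum_{\substack{z \le m \le 4X \\ (m, P(z)) = 1}} \lambda(m) \sum_{\substack{k \le 4X/m \\ (k, P(z))=1 \\ (km(\pm km + h), P(z)) = 1}} \Lambda(k) \tau(\pm km + h)^2.
\]
Since $(m, P(z))=1$ forces every prime factor of $m$ to be $\ge z = X^{1/u}$, the number of prime factors of $m$ is $O(u)$; and since $k$ is $z$-rough as well, $k$ is either $1$ or a prime power $p^\nu$ with $p \ge z$, hence $\Lambda(k) \le \log(4X)$ and $k$ ranges over $O(u)$ values of $\nu$ for each such $p$. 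The main point is that after fixing $m$, the inner sum over $k$ (weighted by $\Lambda(k)$) is a shifted correlation: writing $n = km$, the quantity $\pm km + h$ is a linear form in $k$, and $\tau(\cdot)^2$ is a nonnegative multiplicative function of polynomial growth on average.

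Next I would apply Henriot's bound (this is the "following lemma" referenced — Henriot's estimate for $\sum_{n \le x} F(Q_1(n)) F(Q_2(n))$ type sums, or more precisely for $\sum_{n \le x} f(n) g(an+b)$ with $f,g$ multiplicative and controlled). Concretely, for fixed $m$ I would bound
\[
\sum_{\substack{k \le 4X/m \\ (k, P(z)) = 1}} \Lambda(k)\, \tau(\pm km + h)^2\, \mathbf{1}_{(\pm km + h, P(z)) = 1}
\]
by replacing $\Lambda(k)$ by $\log(4X)$ times the indicator that $k$ is prime (the prime-power terms $\nu \ge 2$ contribute $O(u \log X \cdot \sqrt{4X/m})$, which after summing against $\lambda(m)/\ldots$ is absorbed into the $u^{10}/z$ error since $m \ge z$), and then estimating $\sum_{p \le 4X/m} \tau(\pm pm + h)^2$ via Henriot / Shiu-type bounds for $\tau^2$ over a linear progression. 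This produces something of the shape $\frac{h}{\varphi(h)} \cdot \frac{X/m}{\log(X/m)} \cdot (\log X)^{O(1)}$ with an extra gain from the $P(z)$-coprimality sieve on the argument $\pm pm+h$ (which removes small primes and yields the $\frac{1}{\log X}$ saving and a factor like $u^{O(1)}$). The local densities at primes $p \mid h$ account for the $\frac{h}{\varphi(h)}$ factor; at primes $p \nmid h$ the coprimality condition on both $n$ and $\pm n + h$ together with the sieve gives the logarithmic saving. Summing over $m$ then yields the stated bound: the main term $u^4 \sum_{z \le m \le 4X/z,\ (m,P(z))=1} \lambda(m)/m$ comes from the principal term $\sum_m \lambda(m) \cdot \frac{X/m}{\log X} \cdot u^{O(1)}$ (here one also uses $\log(X/m) \gg \log X$ in the relevant range, or handles $m$ close to $X$ separately), and the $u^{10}/z$ term collects the prime-power contributions and the tail where $m$ is large.

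The main obstacle I anticipate is getting the correct shape and size of the error term, in particular keeping the $h/\varphi(h)$ dependence uniform for $h$ as large as $X^{10}$ and ensuring the $u$-dependence is only polynomial. Henriot's theorem gives a clean bound $\sum_{n \le x} f(n) g(n + h) \ll \frac{h}{\varphi(h)} \cdot \frac{x}{\log^2 x} \cdot (\text{Euler factor})$ for $\tau$-like $f,g$, but here one factor is $\Lambda$-weighted (reduced to primes) so one loses a $\log$, giving $\frac{x}{\log x}$; the delicate points are (i) checking Henriot's hypotheses are met uniformly in the shift $\pm m \cdot (\cdot) + h$ — i.e. the linear form has bounded content and the "bad prime" contributions are controlled — which is where the coprimality to $P(z)$ is genuinely used to restrict to moduli $\ge z$ and produce the $1/z$ savings in the error, and (ii) summing the resulting bound over $m \ge z$ without losing more than a bounded power of $u$, which requires being slightly careful when $m$ is within a factor $z$ of $4X$ (so that the inner range for $k$ is short) — these short-range $m$ are exactly what feed the $u^{10}/z$ term. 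None of this is deep, but bookkeeping the constants and the $u$-powers is the part that needs care.
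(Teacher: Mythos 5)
Your overall strategy --- unfold $c_n$ as a double sum over $(\ell,m)$ and bound the inner correlation over $\ell$ via Henriot --- is the same as the paper's. But the execution has a gap at exactly the point you flag as ``delicate.'' The paper does not attempt to apply Henriot to the linear form $\pm m\ell + h$ when $(m,h)>1$; instead, it first splits the sum over $n$ according to whether $(n,h)=1$ or $(n,h)>1$, \emph{before} unfolding $c_n$. In the latter case a common prime factor $p\mid(n,h)$ with $p\geq z$ (here $(n,P(z))=1$ is used) is pulled out, $c_n$ is bounded trivially by $\tau(n)^2\log n$, and Lemma~\ref{le:Henriot}(i) is applied to $n/p$ against $\pm n/p + h/p$; summing over $p\mid h$, $p\geq z$ is what produces the $u^{10}/z$ term. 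Only on the complementary set $(n,h)=1$ is $c_n$ unfolded, and there $(m,h)=1$ holds automatically so Lemma~\ref{le:Henriot}(ii) (which explicitly requires $(m,hP(w_2))=1$) applies, yielding the $u^4\sum \lambda(m)/m$ main term. You correctly identify bounded content of the linear form as the issue and correctly guess that coprimality to $P(z)$ gives a $1/z$ saving, but you never actually carry out the split that resolves it; without it, Henriot's hypotheses are simply not met for those $m$ with $(m,h)>1$.

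Two further inaccuracies in your accounting. First, you attribute the $u^{10}/z$ term to prime-power contributions and to $m$ near the top of its range; in fact it comes entirely from the $(n,h)>1$ case just described, and the range $z\leq m\leq 4X/z$ forced by $\ell\geq z$ already precludes $m$ being within a factor $z$ of $4X$. Second, your proposed treatment of prime powers $\ell=p^\nu$, $\nu\geq 2$ --- counting $O(\sqrt{4X/m})$ such $\ell$ and multiplying by a pointwise bound for $\tau(\pm m\ell+h)^2$ --- is too lossy: the pointwise bound $\tau(n)^2\ll n^{o(1)}$ is not $O(1)$, so after summing over $m$ you get roughly $X^{1+o(1)}/\sqrt{z}$, which is not absorbed into $Xu^{10}/(z\log X)$. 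The paper avoids this by replacing $\Lambda(\ell)$ with $\log X$ times the indicator that $\ell$ is $(4X/m)^{1/4}$-rough, so that the prime-power terms (up to $\nu\leq 4$) are handled by the same Henriot application as the primes; the rare $\nu\geq 5$ terms need either a similar rough-number observation or a sieved average bound rather than a pointwise one. So the proposal has the right skeleton and the right lemma, but the decisive $(n,h)$-split is missing and the bookkeeping that feeds the $u^{10}/z$ error would not go through as written.
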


Here the sum over $m$ can be estimated in terms of $\eta$. The following lemma will be proved in Section~\ref{se:ExcCharCons}.
\begin{lemma} \label{le:exzero}
Let $\chi$ be a primitive quadratic character modulo $q \geq 2$. Assume that $L(s,\chi)$ has a real zero $\beta_0$ such that
\[
\beta_0 = 1-\frac{1}{\eta \log q}
\]
for some $\eta \geq 10$. Let $z=q^{v}$ for some $v \in \mathbb{R}_+$. Then for any $Y > z$
\[
\sum_{\substack{z \leq m \leq Y \\ (m, P(z)) = 1}} \frac{\lambda(m)}{m} \ll \left(\frac{1}{v^{2}\eta^{v/2}} + \frac{v}{\eta} \cdot \frac{\log Y}{\log z} + \frac{1}{z}\right)  \bigg(\frac{\log Y}{\log z}\bigg)^2.
\] 
\end{lemma}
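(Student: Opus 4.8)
The plan is to bound $\sum_{z \le m \le Y,\, (m,P(z))=1} \lambda(m)/m$ by relating the Dirichlet series $\sum_m \lambda(m) m^{-s}$ to $\zeta(s) L(s,\chi)$ and exploiting the fact that $L(s,\chi)$ has a zero at $\beta_0$ very close to $1$, which forces $\lambda = 1 \ast \chi$ to behave, in a suitable averaged sense, like a sparse (lacunary) sequence. Concretely, $L(1,\chi) = \prod_p (1-\chi(p)/p)^{-1}$ is small (of size $\asymp 1-\beta_0 = 1/(\eta\log q)$ up to $\log q$ factors by standard estimates, e.g.\ via the classical zero-free region bound or just the mean value of $L'/L$), so $\chi(p) = -1$ for "most" primes $p$ in the relevant range, and hence $\lambda(m) = \sum_{d \mid m}\chi(d)$ is typically small. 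The rough-number condition $(m, P(z))=1$ is what makes this usable: every prime factor of $m$ is at least $z = q^v$, so $m$ has at most $\log Y/\log z$ prime factors, which both controls $\tau(m) \le 2^{\log Y/\log z}$-type quantities and, more importantly, means the product over $p \mid m$ of the local factors at primes with $\chi(p)=+1$ is short.

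I would run the argument as follows. First, write the sum as an integral of a Dirichlet series: for a smooth cutoff one has $\sum_{z \le m \le Y,\,(m,P(z))=1} \lambda(m)/m = \frac{1}{2\pi i}\int F(s) (\text{kernel})\,ds$ where $F(s) = \sum_{(m,P(z))=1} \lambda(m) m^{-1-s}$. This $F$ factors as an Euler product $\prod_{p \ge z}\big(1 + \lambda(p)p^{-1-s} + \lambda(p^2)p^{-2(1+s)} + \cdots\big)$, and since $\lambda = 1 \ast \chi$ the local factor at $p$ is $(1-p^{-1-s})^{-1}(1-\chi(p)p^{-1-s})^{-1}$, so $F(s)$ is the "tail" of $\zeta(1+s)L(1+s,\chi)$ with the Euler factors below $z$ removed. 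Second, I would use the standard consequence of the Siegel zero (the analogue of~\eqref{eq:LambdaApprox} made quantitative, cf.\ the discussion after Theorem~\ref{th:Goldbach}, or \cite[Ch.~11]{MVBook}): in the region $\mathrm{Re}(s) \ge -c/\log q$, $|\mathrm{Im}(s)| \le (\log q)^{O(1)}$, the function $L(1+s,\chi)$ is bounded, and moreover $L(1,\chi) \ll 1/\eta$ (up to logs), and $\zeta(1+s)$ contributes a harmless factor $\asymp 1/s$ near $s=0$. Third — and this is where the rough-number restriction and the quantity $v$ enter — the removed Euler factors below $z = q^v$ contribute a factor which, evaluated near $s=0$, is $\asymp \prod_{p<z}(1-1/p)(1-\chi(p)/p) \asymp \frac{1}{\log z}\cdot\frac{1}{\log z}\cdot(\text{stuff})$... actually the key point is that $\prod_{p<z}(1-\chi(p)p^{-1})^{-1}$ is a partial product of $L(1,\chi)^{-1}$, so its reciprocal is bounded, and $\prod_{p<z}(1-p^{-1})$ contributes $1/\log z$. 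Balancing: moving the contour to $\mathrm{Re}(s) = -c/\log z$ picks up a saving of roughly $Y^{-c/\log z}$... no — I want a saving in $z$, so I push to the left and get the main term from the pole structure plus a bound; the three terms $v^{-2}\eta^{-v/2}$, $(v/\eta)(\log Y/\log z)$, and $1/z$ correspond respectively to (a) the contribution where the saving $\eta^{-v/2}$ comes from combining $L(1,\chi)\ll 1/\eta$ with the length-$z = q^v$ of the missing product pushed to the zero's abscissa $\beta_0 = 1 - 1/(\eta\log q)$ so that $z^{1-\beta_0} = q^{v/(\eta\log q)}\cdot\log$-stuff, hmm, more precisely $z^{-(1-\beta_0)} \cdot \eta^{-1}$-type terms where $z^{1-\beta_0} \asymp \exp(v/\eta)$ so for this to be a genuine saving one rather uses that there are $\gg v\log q/\log\log$ primes below $z$ with... — (b) the main error from the pole of $\zeta$ at $1$ contributing $\log Y/\log z$ and the $L$-value giving $1/\eta$, times $v$ from the density of primes, and (c) the trivial truncation error $1/z$ from the lower cutoff $m \ge z$.

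The main obstacle I anticipate is getting the dependence on $v$ (equivalently on $z$ relative to $q$) exactly right, and in particular extracting the factor $\eta^{-v/2}$ in the first term. The exponent $v/2$ rather than $v$ suggests that one does not get the full "$z^{1-\beta_0}$" saving but loses a square root somewhere — plausibly because one only controls $L(1,\chi)$ or a short Dirichlet polynomial approximation to $L(1+s,\chi)^{-1}$ on average or via Cauchy–Schwarz, or because the relevant estimate is for $\prod_{p<z}(1 - \chi(p)/p)$ and one invokes $\sum_{p<z}(1+\chi(p))/p \ll \log\eta + \log\log q$ type bounds (the "$\chi(p)=-1$ for most $p$" statement), whose error term only yields half the naive exponent when exponentiated. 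I would handle this by writing $\lambda = 1\ast\chi$ explicitly, swapping to $\sum_{d}\chi(d)/d \sum_{e}1/e$ over the constrained ranges, and carefully tracking which of $d$ or $e$ is forced to be large by the conditions $m \ge z$ and $(m,P(z))=1$; the term where $d$ (the $\chi$-twisted variable) is large is controlled by partial summation against the Pólya–Vinogradov or, better, against the smallness of $\sum_{d\le t}\chi(d)/d$ which is governed by $L(1,\chi) \ll 1/\eta$ and the zero $\beta_0$, while the term where $e$ is large but $d$ small contributes the $1/z$ and the $(v/\eta)(\log Y/\log z)$ pieces. The factors $(\log Y/\log z)^2$ out front are then just the crude count of the number of prime factors / the number of dyadic ranges for $d$ and $e$, consistent with $m$ being $z$-rough.
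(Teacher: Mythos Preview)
Your proposal is not a proof but a collection of tentative strategies, and you yourself flag the crux: you do not know how to obtain the term $\eta^{-v/2}$. The contour-integration route you sketch does not naturally produce it, and your attempts to back it out of $z^{1-\beta_0}$ fail because $z^{1-\beta_0} = \exp(v/\eta)$ is essentially $1$ here, not a power of $\eta$. This is a genuine gap.

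The paper's argument is both simpler and different in structure. First, one separates off non-squarefree $m$ (giving the $1/z$ term trivially), and for squarefree $m$ uses the elementary inequality
\[
\sum_{\substack{z \le m \le Y \\ (m,P(z))=1}} \frac{|\mu(m)|\lambda(m)}{m}
\;\le\; \prod_{z \le p \le Y}\Bigl(1+\frac{\lambda(p)}{p}\Bigr) - 1
\;\le\; \exp\Bigl(\sum_{z \le p \le Y}\frac{\lambda(p)}{p}\Bigr) - 1.
\]
So everything reduces to bounding $\sum_{z \le p \le Y}\lambda(p)/p$. The range $p > q^{1/2+\delta}$ contributes $\ll \log Y/(\eta\log q) = (v/\eta)(\log Y/\log z)$: this comes from the identity $\sum_{m\le y}\lambda(m)/m = L(1,\chi)\log y + L'(1,\chi) + O(q^{-\delta/3})$ together with $L'(1,\chi)/L(1,\chi) \asymp \eta\log q$, which combine to give both a lower bound $\sum_{m\le q^{1/2+\delta}}\lambda(m)/m \gg \eta L(1,\chi)\log q$ and, by subtraction, $\sum_{q^{1/2+\delta}<m\le Y'}\lambda(m)/m \ll L(1,\chi)\log Y'$; restricting the latter to $m=pn$ with $n$ small and $p$ large and dividing gives the prime-sum bound.

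The term $\eta^{-v/2}$ arises from the remaining range $z \le p \le q^{1/2+\delta}$, and the mechanism is not analytic but combinatorial. One slices this range into blocks $q^{(1/2+\delta)/k} < p \le q^{(1/2+\delta)/(k-1)}$ for $2\le k\le K$ with $K \asymp 1/v$. For each such block one again lower-bounds $\sum_{q^{1/2+\delta}<m\le q^3}\lambda(m)/m$ from below by restricting to $m = p_1\cdots p_k n$ with all $p_j$ in the block and $n$ small; multiplicativity factors out the $k$-th power of the block prime-sum, yielding
\[
\Bigl(\sum_{q^{(1/2+\delta)/k}<p\le q^{(1/2+\delta)/(k-1)}}\frac{\lambda(p)}{p}\Bigr)^{k} \ll \frac{k!\,2^{O(k)}}{\eta},
\quad\text{hence}\quad
\sum \frac{\lambda(p)}{p} \ll \frac{k}{\eta^{1/k}}.
\]
Summing over $k\le K$ gives $\ll K^2\eta^{-1/K} \ll v^{-2}\eta^{-v/2}$. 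This $k$-th power trick is the missing idea in your proposal; none of the Dirichlet-series manipulations you outline access it, and the exponent $v/2$ (rather than $v$) is an artefact of the block endpoints $q^{(1/2+\delta)/k}$ and the choice $K\approx (1/2)/v$, not of Cauchy--Schwarz.
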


To deal with the main term in~\eqref{eq:Lnn+hSplit} we use the following proposition which we will prove in Section~\ref{se:typeI2prop}. Note that this proposition is unconditional and gives an asymptotic formula for generalized divisor function correlations over rough numbers. The parameter $\beta$ refers to the $\beta$-sieve described in Lemma \ref{le:Sieve}. 
\begin{proposition}
\label{prop:typeI2}
Let $\delta > 0$. Let $X, M_1, M_2, N_1, N_2 \geq 1$ be such that 
\[
M_1 N_1 \asymp X, \quad X \ll M_2 N_2 \ll \delta^{-1} X, \quad \text{and} \quad M_j \ll N_j. 
\]
Let $h \in \mathbb{Z}_+$. Let $q \geq 1$ and let $\chi_1, \chi_2, \psi_1, \psi_2$ be real characters $\pmod{q}$. Let $f \colon \mathbb{R}_+^4 \to [0, 1]$ be smooth and compactly supported and suppose that for all $j \geq 0$ and $v \in \{1, 2\}$,
\[
\frac{\partial^j}{\partial x_v^j}  f(x_1,x_2,y_1,y_2), \quad \frac{\partial^j}{\partial y_v^j}  f(x_1,x_2,y_1,y_2) \ll_j \delta^{-j}.
\] 
Let $A \in \mathbb{N}$. Assume that $\beta$ is sufficiently large in terms of $A$ and $u \geq 1000\beta$. Write $z = X^{1/u}$ and, for $r \geq 0$,
\begin{equation}
\label{eq:defzr}
z_r := z^{\left((\beta-1)/\beta\right)^{r}}
\end{equation}
Then 
\[
\begin{split}
&\sum_{\substack{m_1, m_2, n_1, n_2 \\ \pm m_1 n_1 + h = m_2 n_2 \\ (m_1 m_2 n_1 n_2, P(z)) = 1}} \chi_1(m_1) \chi_2(m_2) \psi_1(n_1) \psi_2(n_2) f\left(\frac{m_1}{M_1}, \frac{m_2}{M_2}, \frac{n_1}{N_1}, \frac{n_2}{N_2}\right) \\
&= \prod_{\substack{p < z \\ p \mid h, p \nmid q}} \left(1-\frac{1}{p}\right) \prod_{\substack{p < z \\ p \nmid hq}} \left(1-\frac{2}{p}\right) \cdot \frac{1}{q} \sum_{\gamma \Mod{q}} \psi_1(\gamma) \psi_2(\pm\gamma +h)  \\
& \cdot \sum_{\substack{m_1, m_2 \\ (m_1 m_2, P(z)) = 1}} \frac{\chi_1(m_1) \chi_2(m_2) \psi_1(m_1) \psi_2(m_2)}{m_1 m_2} \int f\left(\frac{m_1}{M_1}, \frac{m_2}{M_2}, \frac{y}{m_1 N_1}, \frac{\pm y + h}{m_2 N_2}\right) dy \\
& +O\Biggl(\sum_{\substack{r_1, r_2, r_3 \geq 0 \\ \max r_j \geq \frac{u}{1000}-\beta}} 2^{-A(r_1 +r_2 +r_3)} \sum_{\substack{m_1, m_2, n_1, n_2 \\ \pm m_1 n_1 +h = m_2 n_2 \\  (n_1, P(z_{r_1})) = (n_2, P_h(z_{r_2}))= 1 \\ (m_1, P(z_{r_3})) = (m_2, h P(z)) = 1}} (\tau(m_1) \tau(n_1) \tau(n_2))^{A+1} \\
& \quad \cdot f\left(\frac{m_1}{M_1}, \frac{m_2}{M_2}, \frac{n_1}{N_1}, \frac{n_2}{N_2}\right) \Biggr) + O_A\left( \delta^{-3} X^{7/9} q^2 + \frac{h}{\varphi(h)} \frac{X}{\log^2 X} \left(\frac{u^6}{z} + e^{-Au/3000}\right)\right).
\end{split}
\]
\end{proposition}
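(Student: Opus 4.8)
The plan is to prove Proposition~\ref{prop:typeI2} by opening the summation condition $\pm m_1 n_1 + h = m_2 n_2$ arithmetically and splitting it into a main term (from the ``diagonal'' residue contribution) plus error terms handled by a $\beta$-sieve decomposition of the coprimality conditions. First I would fix $m_1, m_2$ and view the inner sum over $n_1, n_2$ as counting solutions to a linear equation $\pm m_1 n_1 - m_2 n_2 = -h$ with the smooth weight $f$ and with $n_1, n_2$ restricted to residues $\bmod\ q$ (via $\psi_1, \psi_2$) and to integers coprime to $P(z)$. For this, replace the coprimality condition $(n_j, P(z)) = 1$ by the $\beta$-sieve bounds of Lemma~\ref{le:Sieve}, so that the coprimality sieve weight becomes a short linear combination of conditions $(n_1, P(z_{r_1})) = 1$, $(n_2, P_h(z_{r_2})) = 1$, etc., each with coefficient $O(2^{-A\max r_j})$ when the level $\max r_j$ is large; the ``fundamental interval'' terms (small $r_j$) give the main term and the tails give the first big-$O$ sum in the statement.

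Second, for the main contribution I would detect the equation $\pm m_1 n_1 - m_2 n_2 = -h$ by the standard device: for fixed $n_2$, the congruence $\pm m_1 n_1 \equiv m_2 n_2 - h \pmod{?}$ does not quite close, so instead I would parametrize solutions by writing $n_1$ in an arithmetic progression modulo $m_2$ once $(m_1 m_2, \cdot)$ issues are sorted, or more cleanly use Fourier/Poisson on one of the variables. The cleanest route is: fix $m_1, m_2$; for each residue $\gamma \bmod q$ with $n_1 \equiv \gamma$, and each admissible value of $n_2 \bmod m_1 q$ forced by $\pm m_1 n_1 + h = m_2 n_2$, apply Poisson summation (or just the hyperbola/Dirichlet method) to the sum over $n_1$ in the smooth window. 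The zero frequency yields the main term
\[
\prod_{\substack{p<z\\ p\mid h,\, p\nmid q}}\Bigl(1-\tfrac1p\Bigr)\prod_{\substack{p<z\\ p\nmid hq}}\Bigl(1-\tfrac2p\Bigr)\cdot\frac1q\sum_{\gamma\Mod q}\psi_1(\gamma)\psi_2(\pm\gamma+h)\cdot(\text{integral}),
\]
where the Euler product is exactly the density of pairs $(n_1,n_2)$ with $n_1 n_2(\pm n_1\cdots)$ constraints coprime to $P(z)$ — here one must be careful that the prime $p$ contributes a factor $1-2/p$ when $p\nmid h$ (both $n_1$ and $n_2$ avoid $0\bmod p$, two forbidden classes out of the linear relation) but $1-1/p$ when $p\mid h$ (the two conditions coincide). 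The nonzero frequencies are bounded by $O_A(\delta^{-3} X^{7/9} q^2)$ after summing geometric-type series in the dual variable; this is where a nontrivial cancellation or a Weil-type bound for the relevant exponential sums (complete sums modulo $m_1 m_2 q$, estimated by $X^{7/9}$-type moment bounds as in the classical ternary divisor literature, e.g. via Kloosterman-sum bounds or simply the $X^{1/3}$ savings from Poisson when $M_j$ is small) is invoked.

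Third, for the sieve-tail error term — the displayed double sum over $r_1, r_2, r_3$ with $\max r_j \geq u/1000 - \beta$ — I would bound it by dropping the equation to a divisor-correlation count: the number of $(m_1,m_2,n_1,n_2)$ with $\pm m_1 n_1 + h = m_2 n_2$, weighted by $(\tau(m_1)\tau(n_1)\tau(n_2))^{A+1}$ and restricted to the stated coprimality conditions, is $O\bigl(\frac{h}{\varphi(h)}\frac{X}{\log^2 X}\bigr)$ times the ``sieve density'' saved by the large level; since $z_{r_j} = z^{((\beta-1)/\beta)^{r_j}}$ and $\max r_j$ is large, the product $\prod_{p<z_{r_j}}(1-1/p)$-type factor is $\gg$ something, but the point is that the coefficient $2^{-A(r_1+r_2+r_3)}$ summed against the number of primes in $[z_{r_j+1}, z_{r_j}]$ gives the geometric decay $e^{-Au/3000}$ plus the $u^6/z$ term from small-prime / boundary effects, exactly matching the third big-$O$. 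The dependence $h/\varphi(h)$ enters uniformly through the standard bound $\sum_{n\leq x, (n(\pm n+h),\,\cdot)=1}\tau(n)^{O(1)}\tau(\pm n+h)^{O(1)} \ll \frac{h}{\varphi(h)} x \log^{O(1)} x$ (Henriot-type, as in Lemma~\ref{le:cncor}).

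The main obstacle I expect is the off-diagonal (nonzero frequency) estimate giving $O_A(\delta^{-3}X^{7/9}q^2)$: controlling the exponential sums that arise from Poisson summation over $n_1$ uniformly in $m_1, m_2$ and in the shift $h$, while keeping the power of $q$ down to $q^2$ and absorbing the smooth-weight derivative losses into $\delta^{-3}$. A secondary delicate point is bookkeeping the $\beta$-sieve: one must apply the sieve to \emph{three} coprimality conditions simultaneously ($n_1$, $n_2$, and $m_1$, with $m_2$ kept at level $z$ throughout since it is the ``smooth modulus''), verify the Buchstab-type iteration so that the main (level-$0$) term reassembles into the clean Euler product $\prod(1-2/p)\prod(1-1/p)$, and check that the truncation level $u/1000 - \beta$ is consistent with the hypothesis $u \geq 1000\beta$ so that the fundamental-interval terms are genuinely captured. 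Everything else — the integral manipulation to produce $\int f(\cdots)\,dy$, the passage from counting to the smooth density, the trivial bound $\delta^{-3}X^{7/9}q^2$ being admissible in the regime where the proposition is applied — is routine.
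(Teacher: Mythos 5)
Your plan follows essentially the same route as the paper: make $n_2$ implicit, decompose the coprimality conditions on $n_1, n_2, m_1$ via the $\beta$-sieve (Lemma~\ref{le:Sieve}), apply Poisson summation in $n_1$ so that the zero frequency reassembles into the Euler product and character-sum main term via the fundamental lemma (Lemma~\ref{le:SieveComb}), bound the nonzero frequencies by Weil's bound for (incomplete) Kloosterman sums, and control the sieve tails by Henriot-type correlation estimates. One small imprecision worth flagging: after the Chinese remainder theorem the Kloosterman structure appears in the $m_1$-sum (through $\overline{m_1} \pmod{d_2 m_2}$), not as a complete sum modulo $m_1 m_2 q$, so it is an incomplete Kloosterman sum in the variable $m_1$ with sieve modulus $d_2 m_2$ that must be estimated (Lemma~\ref{le:kloosterman}).
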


An application of Henriot's bound (see Lemma~\ref{le:Henriot} below) will allow us to show in Section~\ref{ssec:multfunct} that 
\begin{equation}
\label{eq:ConsHenriot}
\begin{split}
&\sum_{\substack{r_1, r_2, r_3 \geq 0 \\ \max r_j \geq \frac{u}{1000}-\beta}} 2^{-A(r_1 + r_2+r_3)} \sum_{\substack{m_1, m_2, n_1, n_2 \\ m_1 n_1 \leq 10X \\ \pm m_1 n_1 +h = m_2 n_2 \\  (n_1, P(z_{r_1})) = (n_2, P_h(z_{r_2}))= 1 \\ (m_1, P(z_{r_3})) = (m_2, hP(z)) = 1}} (\tau(m_1) \tau(n_1) \tau(n_2))^{A+1} \\
&\ll_A \frac{h}{\varphi(h)} \frac{X}{\log^2 X} e^{-A u/2000}.
\end{split}
\end{equation}

In Section~\ref{se:chisums} we will prove the following lemma which helps us in evaluating the main term we obtain from Proposition~\ref{prop:typeI2}.
\begin{lemma} 
\label{le:chisums}
Let $\chi$ be a primitive quadratic character modulo $q \geq 2$. Assume that $L(s,\chi)$ has a real zero $\beta_0$ such that
\[
\beta_0 = 1-\frac{1}{\eta \log q}
\]
for some $\eta \geq 10$. Let $X \geq 3, u \geq 2$, and $z=X^{1/u}=q^v$. Assume that $X^{1/10} \leq N \leq X^2$ and $1 \leq y \leq X^2$. Then, for any $\varepsilon > 0$ and $A, C \geq 2$, 
\[
\sum_{\substack{n \leq N \\ (n,P(z))=1}} \frac{\chi(n)\log (y/n)}{n} =  (1+ O_{A,C,\varepsilon}(\mathcal{E}))\prod_{p < z}  \left(1-\frac{1}{p}\right)^{-1} 
\]
with
\[
\mathcal{E} =\frac{u^4}{v^2 \eta^{v/2}} + \frac{vu^5}{\eta}  +\frac{u^4}{z} + e^{-A u/3000} +  \exp(- C\log^{3/5-\eps} X).
\]
\end{lemma}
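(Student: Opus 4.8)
The plan is to evaluate the truncated sum $S := \sum_{n \leq N,\, (n,P(z))=1} \frac{\chi(n)\log(y/n)}{n}$ by first removing the coprimality condition via M\"obius inversion over $P(z)$, then relating the resulting complete sums to $L(s,\chi)$ and its derivative near $s=1$, and finally exploiting the Siegel zero to show $L(1,\chi)$ is tiny so that the main term has the claimed shape. Concretely, write $\mathbf{1}_{(n,P(z))=1} = \sum_{d \mid (n, P(z))} \mu(d)$, so that
\[
S = \sum_{\substack{d \mid P(z)}} \frac{\mu(d)\chi(d)}{d} \sum_{\substack{e \leq N/d}} \frac{\chi(e)(\log(y/d) - \log e)}{e}.
\]
The inner sum is a partial sum of $\sum_e \chi(e)/e = L(1,\chi)$ and of $\sum_e \chi(e)\log e / e = -L'(1,\chi)$, up to tails. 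The first preparatory step is therefore to show that these partial sums, truncated at $N/d \geq X^{1/10}/P(z) \geq X^{1/20}$ (say), agree with the full $L$-values up to an error of size $\exp(-C\log^{3/5-\varepsilon}X)$; this is exactly the kind of zero-free-region/Perron-type estimate that produces the $\exp(-C\log^{3/5-\varepsilon}X)$ term in $\mathcal{E}$, and one must be a little careful that the zero-free region is not destroyed by the exceptional zero (one uses the standard region with the single zero $\beta_0$ removed, and the contribution of $\beta_0$ is governed by $1-\beta_0 = 1/(\eta\log q)$).

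The second step is to reassemble: after replacing inner sums by $L$-values, $S$ becomes $\left(\sum_{d\mid P(z)} \frac{\mu(d)\chi(d)}{d}\right)\cdot\big(\text{something involving } L(1,\chi), L'(1,\chi)\big)$ plus a main term coming from the $\log(y/d)$ weight. The partial Euler product $\sum_{d\mid P(z)} \frac{\mu(d)\chi(d)}{d} = \prod_{p<z}(1 - \chi(p)/p)$, and one wants to compare this with $\prod_{p<z}(1-1/p)^{-1}$, which is the target. This comparison is where the Siegel zero does the real work: since $L(1,\chi)^{-1} = \prod_p (1+\mu(p)\chi(p)/p)^{-1}$... more precisely $L(1,\chi) = \prod_p (1-\chi(p)/p)^{-1}$, so $L(1,\chi)$ small forces $\prod_{p<z}(1-\chi(p)/p)$ to be close to $\prod_{p<z}(1-1/p)^{-1} \cdot (\text{tail correction})$. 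Quantitatively, using Lemma~\ref{le:exzero}-type input or directly the bound $L(1,\chi) \ll 1/(\eta\log q)$ together with a Mertens-type estimate for the tail $\prod_{p\geq z}(1-\chi(p)/p)^{-1}$, one gets that $\prod_{p<z}(1-\chi(p)/p)$ differs from $\prod_{p<z}(1-1/p)^{-1}$ by a factor $1 + O(\mathcal{E})$; tracking the dependence on $v = \log z/\log q$ and $u$ is what produces the terms $u^4/(v^2\eta^{v/2})$, $vu^5/\eta$, $u^4/z$ in $\mathcal{E}$. The term $e^{-Au/3000}$ appears from estimating the contribution of those $d \mid P(z)$ that are large (the M\"obius sum over $d \mid P(z)$ has $\sim 2^{\pi(z)}$ terms, but weighted by $1/d$ the mass beyond $d \geq X^{1/3}$, say, is controlled by a Rankin-type bound, costing $e^{-cu}$).

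The step I expect to be the main obstacle is the quantitative comparison of the partial product $\prod_{p<z}(1-\chi(p)/p)$ with $\prod_{p<z}(1-1/p)^{-1}$ with an error that is simultaneously good in $\eta$, $v$, and $u$ --- i.e.\ organizing the three regimes (the "genuinely lacunary" regime where $\eta^{v/2}$ dominates, the "linear in $1/\eta$" regime, and the trivial-tail regime $1/z$) into the single clean bound $\mathcal{E}$. This requires knowing not just that $L(1,\chi)$ is small but how the smallness propagates through $\sum_{p<z}\chi(p)\log p / p \approx -\log z + \log(1/L(1,\chi)) + \dots$; one essentially needs a Mertens-type theorem for $\chi$ in the presence of a Siegel zero, which should follow from the explicit formula / Landau's argument applied to $-L'/L(s,\chi)$, again with the zero $\beta_0$ contributing the main deviation. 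The handling of the $\log(y/n)$ weight itself is routine: writing $\log(y/n) = \log(y/d) - \log e$ and using that the $L'(1,\chi)$ piece is controlled the same way as the $L(1,\chi)$ piece (indeed $L'(1,\chi) \ll \log^2 q$ trivially and is further small when there is a Siegel zero by the same explicit-formula reasoning), so it only affects the power of $u$ and $\log$ in $\mathcal{E}$, not its structure. Finally I would collect all error terms, note that $\exp(-C\log^{3/5-\varepsilon}X)$ absorbs lower-order pieces, and conclude the stated asymptotic with the factor $\prod_{p<z}(1-1/p)^{-1}$ and relative error $O_{A,C,\varepsilon}(\mathcal{E})$.
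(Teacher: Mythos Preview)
Your decomposition via M\"obius over $d\mid P(z)$ and Perron for the inner sum is reasonable in spirit, but there is a genuine gap in where the main term comes from. After replacing the inner sum by $L(1,\chi)\log(y/d)+L'(1,\chi)$ you obtain essentially $G(1)\log y + G'(1)$, where $G(s)=L(s,\chi)\prod_{p<z}(1-\chi(p)p^{-s})$. Since $L(s,\chi)$ has no pole at $s=1$, there is no ``free'' main term: the target $\prod_{p<z}(1-1/p)^{-1}$ must come out of $G'(1)$ itself. Two of your key heuristics are then incorrect. First, $L'(1,\chi)$ is \emph{not} small under a Siegel zero; from $L'/L(1,\chi)=1/(1-\beta_0)+O(\log q)$ one has $L'(1,\chi)\sim L(1,\chi)\eta\log q$, which is of order $1$ up to powers of $\log q$, and this is precisely what supplies the main term. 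Second, $\prod_{p<z}(1-\chi(p)/p)$ is \emph{not} asymptotic to $\prod_{p<z}(1-1/p)^{-1}$: in the idealized case $\chi(p)=-1$ one gets $\prod_{p<z}(1+1/p)\sim (6/\pi^2)\prod_{p<z}(1-1/p)^{-1}$, off by a constant factor. So neither piece individually behaves as you claim, and the correct answer only emerges from the combination $L'(1,\chi)\prod_{p<z}(1-\chi(p)/p)+L(1,\chi)(\text{log-weighted product})$, which you have not analyzed.

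The paper avoids this difficulty by first writing $\chi=\mu\ast\lambda$ (with $\lambda=1\ast\chi$) and using Lemma~\ref{le:exzero} to show that replacing $\chi(n)$ by $\mu(n)$ (and then by the Liouville function $\lambda_L(n)$) costs only the $\eta$-dependent terms in $\mathcal{E}$. After that switch the sum involves only $\zeta$, whose pole at $s=1$ produces the main term $\prod_{p<z}(1-1/p)^{-1}$ transparently via Perron and a contour shift into the Vinogradov--Korobov zero-free region (this is where $\exp(-C\log^{3/5-\varepsilon}X)$ enters). The coprimality condition is handled not by full M\"obius but by the $\beta$-sieve (Lemma~\ref{le:Sieve}), which gives the $e^{-Au/3000}$ term cleanly. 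Your route could in principle be repaired --- one would need a precise evaluation of $G'(1)$ via a Mertens-type theorem for $\chi$ under the Siegel zero --- but that is essentially equivalent to proving Lemma~\ref{le:exzero} in a different guise, and your proposal as written does not carry it out.
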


The complete character sums resulting from Proposition~\ref{prop:typeI2} will be evaluated using the following elementary lemma which will be proved in Section~\ref{ssec:char}.
\begin{lemma}
\label{le:Chib2}
Let $q \geq 2$ and let $\chi$ be a primitive quadratic character of modulus $q = 2^r q'$ with $r \geq 0$ and $2 \nmid q'$. Let $\chi_0$ be the principal character $\pmod{q}$, and let $h$ be an even integer. Then
\begin{align}
\label{eq:chi00}
\frac{1}{q}\sum_{m \Mod{q}} \chi_0(m)\chi_0(\pm m+h) &= \prod_{p \mid (q, h)} \left(1-\frac{1}{p}\right)\prod_{\substack{p \mid q \\ p \nmid h}} \left(1-\frac{2}{p}\right), \\
\label{eq:chip0}
\frac{1}{q} \sum_{m \Mod{q}} \chi(m)\chi_0(\pm m+h) &= \frac{-\chi(\mp h)}{q}, \\
\label{eq:chipp}
\frac{1}{q}\sum_{m \Mod{q}} \chi(m) \chi(\pm m+h) &=  \mathbf{1}_{\varphi(2^r) \mid h} (-1)^{\frac{h}{\varphi(2^r)}} \chi(\pm 1) \prod_{p \mid (q, h)} \left(1-\frac{1}{p}\right)\prod_{\substack{p \mid q \\ p \nmid h}} \frac{-1}{p}.
\end{align}
\end{lemma}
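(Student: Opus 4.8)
The plan is to exploit the factorization of the complete sums over $\mathbb{Z}/q\mathbb{Z}$ into a product over prime power components via the Chinese Remainder Theorem, and then to evaluate each local factor explicitly. Write $q = p_1^{a_1}\cdots p_t^{a_s}$; since $\chi$ is primitive and quadratic, its conductor is $q$, and it is a classical fact (see e.g.~\cite[Section 3.3]{IwKo}) that $q' = q/2^r$ is squarefree, while the $2$-part $2^r$ satisfies $r \in \{0,2,3\}$. Thus for each odd prime $p \mid q$ the local component $\chi_p$ is the Legendre symbol $\left(\frac{\cdot}{p}\right)$, and the $2$-part (if present) is one of the two quadratic characters modulo $4$ or $8$. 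I would first record the general principle: for a function of the form $m \mapsto f(m)g(\pm m + h)$ with $f = \prod_p f_p$ and $g = \prod_p g_p$ multiplicative in the modulus, one has
\[
\frac{1}{q}\sum_{m \Mod{q}} f(m) g(\pm m + h) = \prod_{p^a \| q} \left(\frac{1}{p^a}\sum_{m \Mod{p^a}} f_p(m) g_p(\pm m + h)\right),
\]
which follows by CRT since $\pm m + h$ reduces componentwise. This reduces everything to local computations at each $p^a \| q$.

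For \eqref{eq:chi00}, the local sum is $\frac{1}{p^a}\#\{m \Mod{p^a} : p \nmid m,\ p \nmid (\pm m + h)\}$. If $p \mid h$ this counts $m$ with $p \nmid m$, giving $1 - 1/p$; if $p \nmid h$ it excludes both $m \equiv 0$ and $m \equiv \mp h$, which are distinct residues mod $p$, giving $1 - 2/p$. Multiplying over $p \mid q$ yields the claim. For \eqref{eq:chip0}, at an odd prime $p \mid q$ the local factor is $\frac{1}{p}\sum_{m \Mod p}\left(\frac{m}{p}\right)\mathbf{1}_{p\nmid (\pm m+h)}$; since $\sum_m \left(\frac{m}{p}\right) = 0$, this equals $-\frac{1}{p}\left(\frac{\mp h}{p}\right)$ when $p \nmid h$ (removing the term $m \equiv \mp h$), and equals $0$ when $p \mid h$ — but the latter cannot occur since $q' \mid \mp h$ would then combine with... actually here I must be careful: $\chi(\mp h)$ is already $0$ if any odd $p \mid (q,h)$, so the formula is consistent; the $2$-part factor (with $\chi_0$) just contributes the local count $1 - 1/2$ or $1-2/2 = 0$ as appropriate, and one checks the normalization $1/q$ comes out exactly by collecting the $-1/p$ from each odd prime and matching against $\chi(\mp h)/q$. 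The cleanest route is: since $h$ is even, only the odd part $q'$ of $q$ carries a nontrivial $\chi$-factor, and $\frac{1}{q'}\sum_{m}\chi(m)\chi_0(\pm m + h) = \frac{-\chi(\mp h)}{q'}$ by the above, while the $2$-part gives $\frac{1}{2^r}\sum\chi_0(m)\chi_0(\pm m+h) = \prod_{p=2}(\cdots)$; but as $h$ is even this $2$-local count is $1/2^r \cdot \#\{m : 2\nmid m\} = 1/2$ when $r\ge 1$... so one has to track these constants and confirm the product telescopes to $-\chi(\mp h)/q$. I would organize this by treating $\chi_0(\pm m + h) = 1 - \mathbf{1}_{p \mid \pm m+h}$ and expanding.

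For the main identity \eqref{eq:chipp}, the odd local factor at $p \mid q$ is $S_p := \frac{1}{p}\sum_{m \Mod p}\left(\frac{m}{p}\right)\left(\frac{\pm m + h}{p}\right)$, a Jacobsthal-type sum. By the standard evaluation (completing, or using $\sum_m \left(\frac{m(m+k)}{p}\right) = -1$ for $p \nmid k$ and $= p-1$ for $p \mid k$, after a change of variable to pull out $\pm 1$), one gets $S_p = \frac{\chi_p(\pm 1)}{p}\cdot(p-1)$ when $p \mid h$ and $S_p = \frac{\chi_p(\pm 1)}{p}\cdot(-1) = \frac{-\chi_p(\pm 1)}{p}$ when $p\nmid h$; collecting the $\chi_p(\pm 1)$ over all odd $p\mid q$ gives $\chi(\pm 1)$ (since $\chi$ is the product of the Legendre symbols and $\chi(\pm 1)$ on the $2$-part is handled separately). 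The $2$-part factor $\frac{1}{2^r}\sum_{m \Mod{2^r}}\chi_2(m)\chi_2(\pm m + h)$ is where the condition $\mathbf{1}_{\varphi(2^r)\mid h}(-1)^{h/\varphi(2^r)}$ enters: for $r = 0$ it is the empty product $1$ and $\varphi(1) = 1 \mid h$ trivially with $(-1)^h = 1$; for $r = 2$, $\chi_2$ is the nontrivial character mod $4$ and a direct four-term computation gives $\mathbf{1}_{2 \mid h}(-1)^{h/2}\cdot\chi_2(\pm 1)\cdot\frac12$ or similar; for $r = 3$, an eight-term computation with the character mod $8$ gives $\mathbf{1}_{4\mid h}(-1)^{h/4}\cdot(\cdots)$. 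I would simply tabulate these small cases. The main obstacle — really the only nontrivial point — is getting the $2$-adic local factor exactly right, including the sign $(-1)^{h/\varphi(2^r)}$ and the interaction with $\chi(\pm 1)$; everything at odd primes is the classical Jacobsthal sum evaluation. I would therefore devote the bulk of the write-up to a careful case analysis $r \in \{0, 2, 3\}$ for the $2$-part, and dispatch the odd primes with a one-line citation to the character-sum identity $\sum_{m \Mod p}\left(\frac{m^2 + bm + c}{p}\right) = -1$ (resp.\ $p-1$) according as $b^2 - 4c \not\equiv 0$ (resp.\ $\equiv 0$) $\Mod p$.
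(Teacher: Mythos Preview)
Your approach is essentially identical to the paper's: reduce via CRT to local factors at each prime power dividing $q$ (noting $r\in\{0,2,3\}$ and $q'$ squarefree), evaluate \eqref{eq:chi00} by direct counting, \eqref{eq:chip0} via $\sum_m \chi(m)=0$, and \eqref{eq:chipp} via the Jacobsthal-type identity at odd primes together with a finite tabulation at $2^r$.

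One slip to correct in your discussion of \eqref{eq:chip0}: the claim that ``only the odd part $q'$ of $q$ carries a nontrivial $\chi$-factor'' is false --- since $\chi$ is \emph{primitive} mod $q$, its $2$-component $\chi_2$ is nontrivial whenever $r\geq 2$. The correct local computation (which the paper does in one line) is that for even $h$ one has $\chi_{0,2}(\pm m+h)=\mathbf{1}_{2\nmid m}$, so $\sum_{m\Mod{2^r}}\chi_2(m)\chi_{0,2}(\pm m+h)=\sum_m\chi_2(m)=0$; this matches $-\chi_2(\mp h)=0$, and the global identity then holds with both sides vanishing when $r\geq 2$. Your fallback plan (expand $\chi_0(\pm m+h)=1-\mathbf{1}_{p\mid \pm m+h}$) also works, but the point is that the $2$-local factor of the \emph{left} side is a genuine character sum, not a count.
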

Precise evaluation of the character sum~\eqref{eq:chipp} is, in addition to keeping track of some dependencies, the reason we can deal with the case $(h, q)$ is close to $q$ in Theorems~\ref{th:MTzero} and~\ref{th:Goldbach}. As Tao and Ter\"av\"ainen~\cite{TT} consider the more general problem~\eqref{eq:TTgeneral}, they face more complicated character sums and need to apply Weil's bound, and thus they do not obtain similar uniformity in $h$.

In Section~\ref{se:twinPrimeThms} we shall use the results stated here to prove Theorems~\ref{th:MTzero} and~\ref{th:Goldbach}. Then in Section~\ref{se:ShortThms} we deduce Corollary~\ref{cor:shortsexzero} from Theorem~\ref{th:MTzero}.

\begin{remark}
Before moving on, let us briefly discuss how our approach differs from that of Tao and Ter\"av\"ainen~\cite{TT}. We establish Proposition~\ref{prop:typeI2} using the $\beta$-sieve whereas Tao and Ter\"av\"ainen~\cite{TT} use Selberg's sieve in their arguments. Using the $\beta$-sieve makes replacing the condition $\mathbf{1}_{(n, P(z)) = 1}$ by a type I sum more transparent since the remainder is easier to analyse, see Lemma~\ref{le:Sieve}(i) for a convenient bound. This difference of the two sieves is discussed also in~\cite[Section 10.2]{Opera}. Thanks to using the $\beta$-sieve our arguments are considerably simpler than those in~\cite{TT}, and we get improved error terms automatically.

More precisely, the starting point of~\cite{TT} is to replace $\Lambda(n)$ by $(\chi \ast \log)(n) \nu(n),$ where $\nu(n)$ are Selberg sieve weights. Replacing $\Lambda(n)$ by $\Lambda(n) \nu(n)$ is almost immediate thanks to the support of $\Lambda(n)$. However, replacing $\Lambda(n) \nu(n)$ by $(\chi \ast \log)(n) \nu(n)$ is somewhat more complicated and this leads to weaker error terms. Also later the Selberg sieve coefficients complicate matters in~\cite{TT}. Despite this, it might be possible to argue more carefully with the Selberg sieve and obtain error terms comparable to ours.  
\end{remark}

\section{Lemmas} \label{se:lemmas}
\subsection{Multiplicative functions}
\label{ssec:multfunct}
We will use some standard estimates for multiplicative functions. Note first that, when $f \colon \mathbb{N} \to \mathbb{C}$ is a divisor-bounded (i.e. $f(n) \leq \tau(n)^A$ for some $A \geq 1$) multiplicative function, we have
\begin{equation}
\label{eq:f(n)/n_average}
\sum_{n \leq X} \frac{|f(n)|}{n} \ll \prod_{p \leq X} \left(1+\frac{|f(p)|}{p}\right).
\end{equation}
Furthermore, by Mertens' theorem
\begin{equation}
\label{eq:Mertens}
\prod_{w < p \leq z} \left(1+\frac{k}{p}\right) \asymp \left(\frac{\log z}{\log w}\right)^k.
\end{equation}
We shall also need the following consequences of Henriot's bound~\cite{Henriot1, Henriot2}.

\begin{lemma}
\label{le:Henriot}
Let $X \geq 2$ and $2 \leq w_1, w_2 \leq X$. Let also $1 \leq |h| \leq X^{10}$.
\begin{enumerate}[(i)]
\item Let $m_1, m_2 \geq 0$. Then
\[
\begin{split}
&\sum_{n \leq X} \tau(n)^{m_1} \mathbf{1}_{(n, P(w_1)) = 1} \tau(\pm n+h)^{m_2} \mathbf{1}_{(\pm n+h, P_h(w_2)) = 1} \\
&\ll_{m_1, m_2} \frac{h}{\varphi(h)} \cdot \frac{X}{\log^2 X} \left(\frac{\log X}{\log w_1}\right)^{2^{m_1}+1} \left(\frac{\log X}{\log w_2}\right)^{2^{m_2}}.
\end{split}
\]
\item 
Let $m \in \mathbb{Z}$ be such that $(m, h P(w_2)) = 1$. Then
\[
\begin{split}
&\sum_{n \leq X} \mathbf{1}_{(n, P(w_1)) = 1} \mathbf{1}_{(\pm mn+h, P(w_2)) = 1} \tau(\pm mn+h)^2 \ll \frac{h}{\varphi(h)} \frac{X}{\log^2 X} \frac{\log X}{\log w_1} \left(\frac{\log X}{\log w_2}\right)^{4}.
\end{split}
\]
\end{enumerate}
\end{lemma}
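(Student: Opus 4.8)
The plan is to derive both bounds from Henriot's theorem on sums of a non-negative multiplicative function along a product of linear forms \cite{Henriot1, Henriot2}, applied to the forms $L_1(n) = n$ and $L_2(n) = \pm n + h$ in part (i), and $L_1(n) = n$, $L_2(n) = \pm mn + h$ in part (ii). In part (i) the relevant two-variable multiplicative function is
\[
F(a,b) = \tau(a)^{m_1}\,\mathbf{1}_{(a, P(w_1)) = 1}\cdot\tau(b)^{m_2}\,\mathbf{1}_{(b, P_h(w_2)) = 1},
\]
which is non-negative and divisor-bounded, since $\tau(p^j)^{m_i} = (j+1)^{m_i}$ is bounded by a fixed power of $j+1$; thus Henriot's hypotheses are satisfied. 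After decomposing the range of $n$ dyadically, one discards the $n$ with $n \le X^{1/2}$ or $|L_2(n)| \le X^{1/2}$: there the values of $\tau$ are $X^{o(1)}$ and the contribution is a negligible $X^{1/2 + o(1)}$ against the claimed bound (which is $\gg X/\log^2 X$); on the remaining dyadic blocks $Y < n \le 2Y$ the discriminant $\asymp |h| \le X^{10}$ lies in the admissible range of \cite{Henriot1, Henriot2}. (For the form $h - n$ the summand vanishes once $n \ge h$ by our convention that $\tau$ vanishes on non-positive integers, so negativity causes no trouble.)

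On each such block Henriot's theorem provides a bound
\[
\ll \frac{\Delta}{\varphi(\Delta)}\cdot\frac{Y}{\log^2 Y}\cdot\prod_{p \le Y}\mathcal{E}_p,
\]
where, up to bounded factors, $\Delta$ is the product of the leading coefficients and the resultant of $L_1, L_2$, so $\Delta \asymp |h|$ and $\frac{\Delta}{\varphi(\Delta)} \asymp \frac{h}{\varphi(h)}$, and $\mathcal{E}_p$ is the usual local factor, built from the values of $F$ at powers of $p$ weighted by the densities of the residues $n \bmod p^j$ on which $L_1$ or $L_2$ is divisible by exactly $p^j$. The blocks with $Y \asymp X$ dominate the dyadic sum, so the whole sum is $\ll \frac{h}{\varphi(h)}\,\frac{X}{\log^2 X}\,\prod_{p \le X}\mathcal{E}_p$, and the remaining task is to evaluate $\prod_{p \le X}\mathcal{E}_p$ by Mertens' theorem \eqref{eq:Mertens}. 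One splits the primes into: $p < \min(w_1, w_2)$, where both forms are forced coprime to $p$ and $\mathcal{E}_p \asymp 1$; the intermediate range between $w_1$ and $w_2$, where one form is still forced coprime and the other carries its divisor weight $\tau(p)^{m_i} = 2^{m_i}$, so $\mathcal{E}_p = 1 + 2^{m_i}/p + O(1/p^2)$; the range $\max(w_1, w_2) \le p \le X$, where $\mathcal{E}_p = 1 + (2^{m_1}+2^{m_2})/p + O(1/p^2)$; and the primes $p \mid h$, whose effect is already subsumed in the factor $\frac{h}{\varphi(h)}$. Invoking \eqref{eq:Mertens} on each range and collecting powers of $\log$ turns $\prod_{p \le X}\mathcal{E}_p$ into $(\log X/\log w_1)^{2^{m_1}+1}(\log X/\log w_2)^{2^{m_2}}$ — the extra $+1$ on the $w_1$-exponent coming out of the accounting of the local factor at the form $L_1$, which is the summation variable itself — whence, after multiplying by $\frac{h}{\varphi(h)}\frac{X}{\log^2 X}$, the bound in (i).

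Part (ii) is the same argument with $F(a,b) = \mathbf{1}_{(a, P(w_1)) = 1}\cdot\tau(b)^2\,\mathbf{1}_{(b, P(w_2)) = 1}$ and $L_2(n) = \pm mn + h$; the one new feature is the multiplier $m$. Although the naive discriminant is $\asymp m|h|$, each prime $p \mid m$ has $L_2(n) \equiv h \not\equiv 0 \pmod p$ (using $(m,h) = 1$), so $L_2$ is never divisible by such $p$: these are not genuinely bad primes, and they inflate neither the prefactor nor the Euler product, so the bound still carries the factor $\frac{h}{\varphi(h)}$ rather than $\frac{m|h|}{\varphi(m|h|)}$. (Alternatively, in the applications $w_2$ is a fixed power of $X$ and $m \le X$, so $m$ has only $O(1)$ prime factors and $\frac{m}{\varphi(m)} = O(1)$ outright.) I expect no genuine obstacle; the points demanding care are only to check that the discriminant ($\asymp X^{10}$ in part (i), $\asymp m X^{10} \le X^{11}$ in part (ii)) lies in the range of validity of \cite{Henriot1, Henriot2}, to verify Henriot's mild regularity hypotheses on $F$, and to run the prime-range bookkeeping for $\prod_p \mathcal{E}_p$ carefully so as to obtain exactly the exponents $2^{m_1}+1$ and $2^{m_2}$ (respectively $1$ and $4$) above.
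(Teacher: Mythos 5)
Your approach matches the paper's: apply Henriot's bound (\cite[Theorem 3]{Henriot1}) with the two linear forms and the product weight you describe, extract the $\frac{h}{\varphi(h)}$ and $\frac{X}{\log^2 X}$ factors, and estimate the remaining Euler product by Mertens' theorem. The dyadic decomposition is unnecessary --- the paper applies Henriot once on $n \le X$ with discriminant parameter $D = h^2 \le X^{20}$, which is admissible --- but it is harmless. The one substantive point to correct is your explanation of the extra $+1$ in the exponent of $\log X / \log w_1$: it is not a consequence of ``$L_1$ being the summation variable,'' and following that intuition could lead you astray in the bookkeeping. The asymmetry comes from the different sieve conditions, namely $P(w_1)$ (all primes below $w_1$) on $n$ versus $P_h(w_2)$ (primes below $w_2$ \emph{coprime to $h$}) on $\pm n+h$. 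In Henriot's correction factor $\Delta_D$ over the bad primes $p \mid h$, a prime $p < w_1$ is excluded from $n$ by the first indicator and therefore automatically from $\pm n + h$ too, while a prime $p \mid h$ with $p \ge w_1$ survives and can divide both forms; this yields
\[
\Delta_D \ll \prod_{\substack{p\mid h \\ p \ge w_1}}\Bigl(1+\frac{2^{m_1+m_2}}{p}\Bigr) \ll_{m_1,m_2} \frac{\log X}{\log w_1},
\]
which is precisely the source of the $+1$. The $w_2$-exponent picks up no analogous contribution because $P_h(w_2)$ already omits all $p \mid h$. With that correction, your plan carries through.
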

\begin{proof}
\textbf{Proof of (i).}  We apply Henriot's bound~\cite[Theorem 3]{Henriot1} with
\[
\begin{split}
Q_1(n) &= n, \, Q_2(n) = \pm n+h, \, D = h^2, \\
F(n_1, n_2) &= \mathbf{1}_{(n_1, P(w_1)) = 1} \mathbf{1}_{(n_2, P_h(w_2)) = 1} \tau(n_1)^{m_1} \tau(n_2)^{m_2}, \\
\rho_{Q_1}(n) &= |\{v \pmod{n} \colon v \equiv 0 \Mod{n}\}| = 1, \\
\rho_{Q_2}(n) &= |\{v \pmod{n} \colon \pm v+h \equiv 0 \Mod{n}\}| = 1 \\
\rho(p) &=  |\{v \pmod{p} \colon v(\pm v+h) \equiv 0 \Mod{p}\}| =
\begin{cases}
2 & \text{if $p \nmid h$} \\
1 & \text{otherwise},
\end{cases} 
\\
\Delta_D &= \prod_{p \mid h} \Biggl(1+F(p, 1) \frac{|\{n \Mod{p^2} \colon p \Vert n, p \nmid \pm n+h\}|}{p^2} + F(1, p) \frac{|\{n \Mod{p^2} \colon p \nmid n, p \Vert \pm n+h\}|}{p^2} \\
& \qquad + F(p, p) \frac{|\{n \Mod{p^2} \colon p \Vert n, p \Vert \pm n+h\}|}{p^2}\Biggr) \ll \prod_{\substack{p \mid h \\ p \geq w_1}} \left(1+\frac{2^{m_1 + m_2}}{p}\right).
\end{split}
\]
Applying~\cite[Theorem 3]{Henriot1} and then~\eqref{eq:f(n)/n_average}, we obtain
\[
\begin{split}
&\sum_{n \leq X} \tau(n)^{m_1}\mathbf{1}_{(n, P(w_1)) = 1} \tau(\pm n+h)^{m_2} \mathbf{1}_{(\pm n+h, P_h(w_2)) = 1} \\ 
&\ll \Delta_D X \prod_{p \leq X} \left(1-\frac{\rho(p)}{p}\right) \sum_{\substack{n_1 n_2 \leq X \\ (n_1 n_2, D) = 1}} F(n_1, n_2) \frac{\rho_{Q_1}(n) \rho_{Q_2}(n)}{n_1 n_2} \\
&\ll X \prod_{\substack{p \mid h \\ p \geq w_1}} \left(1+\frac{2^{m_1 + m_2}}{p}\right) \cdot \prod_{p \leq X} \left(1-\frac{2}{p}\right) \prod_{p \mid h} \left(1+\frac{1}{p}\right) \\
& \qquad \cdot \prod_{\substack{w_1 \leq p \leq X \\ p \nmid h}} \left(1+\frac{2^{m_1}}{p}\right) \prod_{\substack{w_2 \leq p \leq X \\ p \nmid h}} \left(1+\frac{2^{m_2}}{p}\right).
\end{split}
\]
Here
\[
\begin{split}
\prod_{\substack{p \mid h \\ p \geq w_1}} \left(1+\frac{2^{m_1 + m_2}}{p}\right) &\ll \exp\left(2^{m_1 + m_2} \sum_{\substack{p \mid h \\ p \geq w_1}} \frac{1}{p}\right) \ll \exp\left(2^{m_1 + m_2} \sum_{\substack{p \leq 10\frac{\log X}{\log w_1}}} \frac{1}{p}\right) \\
&\ll_{m_1, m_2} \left(\log \frac{\log X}{\log w_1}\right)^{2^{m_1 + m_2}} \ll_{m_1, m_2} \frac{\log X}{\log w_1}.
\end{split}
\]
and (i) follows from~\eqref{eq:Mertens}.

\textbf{Proof of (ii).} We use a similar application of Henriot's bound~\cite[Theorem 3]{Henriot1} --- this time
\[
\begin{split}
Q_1(n) &= n, \, Q_2(n) = \pm mn+h, \, D = h^2, \, F(n_1, n_2) = \mathbf{1}_{(n_1, P(w_1)) = 1} \mathbf{1}_{(n_2, P(w_2)) = 1} \tau(n_2)^2, \\
\rho_{Q_1}(n) &= |\{v \Mod{n} \colon v \equiv 0 \Mod{n}\}| = 1 \\
\rho_{Q_2}(n) &= |\{v \pmod{n} \colon \pm vm+h \equiv 0 \Mod{n}\}| = 
\begin{cases}
1 & \text{if $(n, m) = 1$} \\
0 & \text{otherwise}.
\end{cases} \\
\rho(p) &=  |\{v \pmod{p} \colon v(\pm vm+h) \equiv 0 \Mod{p}\}| =
\begin{cases}
2 & \text{if $p \nmid hm$} \\
1 & \text{otherwise},
\end{cases}
\end{split}
\]
and 
\[
\begin{split}
\Delta_D &= \prod_{p \mid h} \Biggl(1+ F(p, p) \frac{|\{n \Mod{p^2} \colon p \Vert n, p \Vert \pm mn+h\}|}{p^2}\Biggr) \ll \prod_{\substack{p \mid h \\ p \geq w_2}} \left(1+\frac{4}{p}\right).
\end{split}
\]
Applying~\cite[Theorem 3]{Henriot1} and then~\eqref{eq:f(n)/n_average}, we obtain
\[
\begin{split}
&\sum_{n \leq X} \mathbf{1}_{(n, P(w_1)) = 1} \mathbf{1}_{(\pm mn+h, P(w_2)) = 1} \tau(\pm mn+h)^2 \\
&\ll \Delta_D X \prod_{p \leq X} \left(1-\frac{\rho(p)}{p}\right) \sum_{\substack{n_1 n_2 \leq X \\ (n_1 n_2, D) = 1}} F(n_1, n_2) \frac{\rho_{Q_1}(n) \rho_{Q_2}(n)}{n_1 n_2} \\
&\ll X \prod_{\substack{p \mid h \\ p \geq w_2}} \left(1+\frac{4}{p}\right) \cdot \prod_{\substack{p \leq X}} \left(1-\frac{2}{p}\right) \prod_{\substack{p \leq X \\ p \mid hm}} \left(1+\frac{1}{p}\right) \cdot \prod_{\substack{w_1 \leq p \leq X \\ p \nmid h}} \left(1+\frac{1}{p}\right) \prod_{\substack{w_2 \leq p \leq X \\ p \nmid hm}} \left(1+\frac{2^2}{p}\right)
\end{split}
\]
Since $(m, hP(w_2)) = 1$, this is 
\[
\ll \frac{h}{\varphi(h)} \frac{X}{\log^2 X} \frac{\log X}{\log w_1} \left(\frac{\log X}{\log w_2}\right)^{4} \prod_{\substack{w_2 \leq p \leq X \\ p \mid m}} \left(1+\frac{1}{p}-\frac{4}{p}\right)
\]
and the claim (ii) follows.
\end{proof}

Now Lemma~\ref{le:cncor} and~\eqref{eq:ConsHenriot} follow quickly:
\begin{proof}[Proof of Lemma~\ref{le:cncor}]
Consider
\begin{equation}
\label{eq:cntau}
\begin{split}
&\sum_{n \leq 4X} c_{n} \tau(\pm n+h)^2 \mathbf{1}_{(\pm n+h, P(z)) = 1}
\end{split}
\end{equation}
Using $c_n \leq \mathbf{1}_{(n, P(z)) =1} \tau(n)^2 \log n$, we see that those $n$ with $(n, h) > 1$ contribute
\[
\begin{split}
&\ll \log X  \sum_{\substack{n \leq 4X \\ (n, h) > 1}} \tau(n)^2 \mathbf{1}_{(n, P(z)) = 1} \tau(\pm n+h)^2 \mathbf{1}_{(\pm n+h, P(z)) = 1} \\
&\ll \log X \sum_{\substack{p \mid h \\ z \leq p \leq 4X}} \sum_{\substack{n \leq 4X/p}} \tau(n)^2 \mathbf{1}_{(n, P(z)) = 1} \tau(\pm n+h/p)^2 \mathbf{1}_{(\pm n+h/p, P(z)) = 1}.
\end{split}
\]
By Lemma~\ref{le:Henriot}(i) this is 
\[
\ll \frac{h}{\varphi(h)} X \log X \sum_{\substack{p \mid h \\ z \leq p \leq 4X}} \frac{u^9}{p \log^2(8X/p)} \ll  \frac{h}{\varphi(h)} \frac{X}{\log X} \frac{u^{10}}{z}
\]
since $z = X^{1/u} \leq X^{1/2}$.
On the other hand, those $n$ with $(n, h) = 1$ contribute to~\eqref{eq:cntau} by~\eqref{eq:cndef}
\[
\begin{split}
&\ll \sum_{\substack{z \leq m \leq 4X/z \\ (m, h P(z)) = 1}} \lambda(m) \sum_{\substack{z \leq \ell \leq 4X/m  \\ (\pm \ell m + h, P(z)) = 1}} \Lambda(\ell) \tau(\pm \ell m +h)^2 \\
&\ll \log X \sum_{\substack{z \leq m \leq 4X/z \\ (m, hP(z)) = 1}} \lambda(m) \sum_{\substack{z \leq \ell \leq 4X/m  \\ (\pm \ell m + h, P(z)) = 1}} \mathbf{1}_{(\ell, P(x^{1/4})) = 1} \tau(\pm \ell m +h)^2.
\end{split}
\]
The claim follows from applying Lemma~\ref{le:Henriot}(ii) to the inner sum.
\end{proof}

\begin{proof}[Proof of~\eqref{eq:ConsHenriot}]
We have, for any $r_1, r_2, r_3 \geq 0$,
\[
\begin{split}
& \sum_{\substack{m_1, m_2, n_1, n_2 \\ m_1 n_1 \leq 10X \\ \pm m_1 n_1 +h = m_2 n_2 \\  (n_1, P(z_{r_1})) = (n_2, P_h(z_{r_2}))= 1 \\ (m_1, P(z_{r_3})) = (m_2, hP(z)) = 1}} (\tau(m_1) \tau(n_1) \tau(n_2))^{A+1} \\
& \ll \sum_{n \leq 10X} \tau(n)^{2A+3} \tau(\pm n+h)^{A+2} \mathbf{1}_{(n, P(\min\{z_{r_1}, z_{r_3}\})) = 1} \mathbf{1}_{(\pm n+h, P_h(z_{r_2})) = 1}.
\end{split}
\]
By Lemma~\ref{le:Henriot}(i) and recalling the definition of $z_r$ from~\eqref{eq:defzr}, the left hand side of~\eqref{eq:ConsHenriot} is
\[
\begin{split}
&\ll_A \frac{h}{\varphi(h)}\frac{X}{\log^2 X} \sum_{\substack{r_1, r_2, r_3 \geq 0 \\ \max r_j \geq \frac{u}{1000}-\beta}} 2^{-A(r_1+r_2+r_3)} \left(\frac{\log X}{\log \min\{z_{r_1}, z_{r_3}\}}\right)^{2^{2A+3}+1} \left(\frac{\log X}{\log z_{r_2}}\right)^{2^{A+2}} \\
&\ll \frac{h}{\varphi(h)}\frac{X}{\log^2 X} u^{2^{2A+5}} \sum_{\substack{r_1, r_2, r_3 \geq 0 \\ \max r_j \geq \frac{u}{1000}-\beta}} 2^{-A(r_1+r_2+r_3)} \left(\frac{\beta}{\beta-1}\right)^{2^{2A+4} (r_1 + r_2 + r_3)}.
\end{split}
\]
Once $\beta$ is large enough in terms of $A$, this is 
\[
\ll_A \frac{h}{\varphi(h)}\frac{X}{\log^2 X} u^{2^{2A+5}} 2^{-0.9 A u/1000} \ll_A \frac{h}{\varphi(h)} \frac{X}{\log^2 X} e^{-Au/2000}.
\]
\end{proof}

\subsection{Sieves}
The following is a technical version of the fundamental lemma of the sieve. For the standard version, see e.g.~\cite[Lemma 6.11]{Opera}.
\begin{lemma}
\label{le:Sieve}
Let $\theta \in (0, 1/3)$ and $A \in \mathbb{N}$. There exists $\beta_0 = \beta_0(A) \geq 2$ such that the following holds for any $\beta \geq \beta_0$ and $u \geq \beta/\theta$.

Let $X \geq 2$, write $D = X^\theta$ and $z = X^{1/u}$, and define 
\[
z_r := z^{\left((\beta-1)/\beta\right)^{r}}
\]
Then there exist coefficients $\lambda_d$ with $|\lambda_d| \leq 1$ supported on $d \leq D$ such that the following hold.
\begin{enumerate}[(i)]
\item For every $n \in \mathbb{N}$ and any $v \in \mathbb{N}$,
\[
\mathbf{1}_{(n, P_v(z)) = 1} = \sum_{\substack{d \mid (n, P_v(z))}} \lambda_d + O\left( \sum_{r \geq u\theta -\beta} 2^{-Ar} \mathbf{1}_{(n, P_v(z_r)) = 1} \tau(n)^{A+1}\right).
\]
\item If $g \colon \mathbb{N} \to [-1, 1]$ is a multiplicative function for which $|g(p)| \leq 2/p$ for every prime $p$, then
\[
\sum_{d \mid P(z)} \lambda_d g(d) = (1+O_{\beta, A}(e^{-A\theta u/2})) \prod_{p < z} (1-g(p)) 
\]
\end{enumerate}
\end{lemma}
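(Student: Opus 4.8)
The plan is to take for the coefficients $\lambda_d$ the upper-bound weights of the combinatorial ($\beta$-)sieve of Rosser and Iwaniec, in the form given in \cite{Opera}, built from the level $D=X^\theta$, the sifting parameter $z$, and the geometric sequence $z_r=z^{((\beta-1)/\beta)^r}$. These weights are supported on squarefree $d\le D$ composed only of primes $<z$, satisfy $|\lambda_d|\le 1$, and --- being defined by a purely combinatorial truncation of the Legendre--M\"obius identity --- obey, for \emph{every} set $\mathcal P$ of primes all $<z$,
\[
\mathbf 1_{(n,\,\prod_{p\in\mathcal P}p)=1}\ \le\ \sum_{d\mid(n,\,\prod_{p\in\mathcal P}p)}\lambda_d .
\]
Since the construction is insensitive to $\mathcal P$, we apply it with $\mathcal P=\{p<z:\,p\nmid v\}$, i.e.\ $\prod_{p\in\mathcal P}p=P_v(z)$; this already gives one inequality in~(i), and the substance of~(i) is the matching upper bound for the difference, while~(ii) is the companion main-term estimate. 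Note that passing from $P(z)$ to $P_v(z)$ costs nothing precisely because the weights do not see the sifting set.

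For part~(i) I would start from the exact iterated Buchstab identity: descending the sifting level through $z=z_0\ge z_1\ge z_2\ge\cdots$ and at each stage peeling off the relevant prime divisors of $n$ (those $<z$ coprime to $v$), one expands $\mathbf 1_{(n,P_v(z))=1}$ as an infinite alternating sum of ``branches'' indexed by decreasing chains $p_1>p_2>\cdots$ of such primes, each branch ending in an indicator of the shape $\mathbf 1_{(n/(p_1\cdots p_r),\,P_v(z_{r'}))=1}$ with $r'\le r$. The $\beta$-sieve weights arise by keeping a branch fully M\"obius-expanded while the running product $p_1\cdots p_m$ stays below a threshold dictated by the usual $\beta$-sieve condition (of overall size $\asymp Dz^{-\beta}$), and declaring it an \emph{error term} at the first violation; hence $\sum_{d\mid(n,P_v(z))}\lambda_d-\mathbf 1_{(n,P_v(z))=1}$ is, up to a favourable sign, the sum of the discarded branches. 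For a branch discarded at depth $r$ the carried divisor $d=p_1\cdots p_r$ of $n$ satisfies $d\ge z^{u\theta-\beta}$ (using $D=z^{u\theta}$ and the crude bound $p_m<z$ in the $\beta$-sieve condition); since $d$ is a product of primes $<z$ this forces $r\ge u\theta-\beta$, which is the stated range. Moreover every prime in the chain is $\ge z_r$, so the terminal indicator is at most $\mathbf 1_{(n,P_v(z_r))=1}$, and the number of chains (equivalently, of divisors $d\mid n$) of a given length is at most $\tau(n)$. It then remains to verify that the combinatorial multiplicity of the depth-$r$ branches, together with the per-level gain afforded by the $\beta$-sieve truncation, is $\ll 2^{-Ar}\tau(n)^{A+1}$ once $\beta\ge\beta_0(A)$ --- this is the defining mechanism of the $\beta$-sieve and is where the dependence $\beta_0=\beta_0(A)$ enters. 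Summing over $r\ge u\theta-\beta$ yields the error term in~(i).

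For part~(ii) I would run the same truncated decomposition with $\mathbf 1$ replaced by the multiplicative weight $g$. Writing $V(z):=\prod_{p<z}(1-g(p))=\sum_{d\mid P(z)}\mu(d)g(d)$, we get
\[
\sum_{d\mid P(z)}\lambda_d\,g(d)-V(z)=\sum_{d\mid P(z)}\bigl(\lambda_d-\mu(d)\bigr)g(d),
\]
again a sum of discarded ``main-term branches'' over depths $r\ge u\theta-\beta$. A branch at depth $r$ has absolute value at most $|V(z_r)|$ times a product, over the prime ranges $I$ occurring in the chain, of sums $\sum_{p\in I}|g(p)|$; by the hypothesis $|g(p)|\le 2/p$ and Mertens' estimate~\eqref{eq:Mertens} each such sum is $\ll\log z/\log z_r$, so the depth-$r$ total is $\ll |V(z)|\bigl((\beta/(\beta-1))^2\bigr)^r$ times the same per-level gain (a factor $\le 2^{-A}$ once $\beta\ge\beta_0(A)$) as in~(i). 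The resulting series is geometric, starts at $r\ge u\theta-\beta$, and --- taking $\beta$ large in terms of $A$ so that the ratio is $<2^{-A}$, and using $u\theta\ge\beta\ge\beta_0(A)$ --- sums to $O_{\beta,A}(e^{-A\theta u/2})|V(z)|$, which gives~(ii). Equivalently, (ii) is the quantitative fundamental lemma of the $\beta$-sieve applied to $g$, whose sieve dimension is $\le 2$ because $|g(p)|\le 2/p$; cf.\ the proof of \cite[Lemma~6.11]{Opera}.

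The main obstacle is the combinatorial bookkeeping in~(i): extracting from the truncated Buchstab iteration precisely the bound $\sum_{r\ge u\theta-\beta}2^{-Ar}\mathbf 1_{(n,P_v(z_r))=1}\tau(n)^{A+1}$, and in particular verifying that each additional Buchstab level genuinely saves a factor $\le 2^{-A}$ in the remainder --- this is the feature of the $\beta$-sieve that forces $\beta$ to be large relative to $A$. Once that is secured, part~(ii) is the familiar fundamental-lemma computation with the same ingredients.
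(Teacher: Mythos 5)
Your overall plan --- take the Rosser--Iwaniec $\beta$-sieve weights $\lambda_d=\mu(d)\mathbf 1_{d\in\mathcal D}$ with $\mathcal D=\{p_1\cdots p_r\mid P(z):p_1>\cdots>p_r,\ p_1\cdots p_m p_m^\beta<D\ \forall\text{ odd }m\}$, write the error as the sum of discarded Buchstab branches, note that a discarded branch has depth $r\geq u\theta-\beta$ and smallest prime $\geq z_r$, and then sum a geometric series --- matches the paper. The passage from $P(z)$ to $P_v(z)$ in~(i) is also handled correctly (the weights do not depend on the sifting set; the paper writes $n=n'v'$ and reduces to $v=1$).

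However, there is a genuine gap in your treatment of~(i), at exactly the point you flag as ``the main obstacle.'' You claim that the factor $2^{-Ar}$ arises from a ``per-level gain afforded by the $\beta$-sieve truncation'' and that this is ``where the dependence $\beta_0=\beta_0(A)$ enters.'' That mechanism cannot work pointwise in $n$: for a fixed $n$ divisible by a large squarefree product of primes in $[z_r,z)$, the depth-$r$ remainder $S_r(n)$ really does receive a full unit contribution from each admissible chain, and no choice of $\beta$ shrinks it. In fact~(i) needs no lower bound on $\beta$ in terms of $A$ at all. The paper's observation is much simpler: a discarded branch carries $m=p_1\cdots p_r\mid n$ squarefree with $\omega(m)=r$, so $2^{A(\omega(m)-r)}=1$ identically and hence $2^{-Ar}\tau(n)^{A}\geq 1$; one inserts this factor for free, drops all constraints except $p\mid mk\Rightarrow p\geq z_r$, and bounds $\sum_{m\mid n}2^{A\omega(m)}\leq\tau(n)^{A+1}$. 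That gives
\[
S_r(n)\ \leq\ 2^{-Ar}\,\tau(n)^{A+1}\,\mathbf 1_{(n,P(z_r))=1}
\]
unconditionally; the $2^{-Ar}$ is purely a bookkeeping device so that the later $r$-sum converges after Henriot's bound is applied. Your proposed route (searching for a per-level combinatorial gain and thinking this forces $\beta\geq\beta_0(A)$) would lead you to try to prove something false.

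The condition $\beta\geq\beta_0(A)$ is only needed in~(ii), and there your idea is essentially the right one, though the bookkeeping is off. The paper again inserts $2^{A(\omega(m)-r)}=1$ and absorbs $2^{A\omega(m)}$ into the multiplicative sum over $m$ supported on primes in $[z_r,z)$, paying a factor $\prod_{z_r\leq p<z}(1+2^{A+1}/p)\asymp(\log z/\log z_r)^{2^{A+1}}$ by Mertens. Combined with $(\log z/\log z_r)^{2}$ from $|g(p)|\leq 2/p$, the depth-$r$ term is $\ll 2^{-Ar}(\beta/(\beta-1))^{(2^{A+1}+2)r}\prod_{p<z}(1-g(p))$, and one needs $\beta$ large in terms of $A$ to make $2^{-A}(\beta/(\beta-1))^{2^{A+1}+2}<1$. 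Your sketch only records a factor $(\beta/(\beta-1))^{2r}$ and asserts the $2^{-Ar}$ without explaining its provenance; the precise exponent is what determines $\beta_0(A)$, so this needs to be made explicit.
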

\begin{proof}
Define
\[
\mathcal{D} := \{d = p_1 \dotsm p_r \mid P(z) \colon p_1 > p_2 > \dotsc > p_r, p_1 \dotsm p_m p_m^{\beta} < D \text{ for all odd $m$} \}
\]
and define the upper bound $\beta$-sieve weights $\lambda_d = \mu(d) \mathbf{1}_{d \in \mathcal{D}}$.

\textbf{Proof of (i).} Consider first the case $v = 1$. By the definition of $\lambda_d$, we have (see e.g. \cite[(6.29) with $\mathcal{A} = \{n\}$]{Opera}),
\[
\mathbf{1}_{(n, P(z)) = 1} = \sum_{d \mid (n, P(z))} \lambda_d - \sum_{r \text{ odd}} S_r(n),
\]
where
\[
S_r(n) := \sum_{\substack{n = p_1 \dotsm p_r k \\ p_r < p_{r-1} < \dotsc < p_1 < z \\ p_1 p_2 \dotsm p_r p_r^{\beta} \geq D \\ p_1 \dotsm p_h p_h^{\beta} < D \text{ for all odd $h < r$}}} \mathbf{1}_{(k, P(p_r)) = 1} 
\]
Since $p_j < z$ and $p_1 p_2 \dotsm p_r p_r^{\beta} \geq D$, the sum in $S_r(n)$ is non-empty only if $(r+\beta)/u \geq \theta \iff r \geq u\theta -\beta$. 

Furthermore, since $\frac{\log D}{\log z} = u\theta \geq \beta$, one can easily show that, for every $r$, in the sum defining $S_r(n)$ one has $p_r \geq z_r$ (see e.g.~\cite[Section 6.3]{IwKo}). We write $m = p_1 \dotsm p_r$ so that obviously $2^{A(\omega(m) - r)} \geq 1$. Hence
\[
S_r(n) \leq \sum_{\substack{n = m k \\ p \mid mk \implies p \geq z_r}} 2^{A\omega(m)-Ar} \leq 2^{-Ar} \tau(n)^{A+1} \mathbf{1}_{(n, P(z_r)) = 1},
\]
and (i) follows in case $v = 1$.

When $v > 1$, we write $n = n' v'$ with $(n', v) = 1$ and $p \mid v' \implies p \mid v$. Then $(n, P_v(w)) = (n', P(w))$ for every $w \geq 2$. Hence, by the case $v = 1$, we obtain
\[
\begin{split}
\mathbf{1}_{(n, P_v(z)) = 1} = \mathbf{1}_{(n', P(z)) = 1} &= \sum_{\substack{d \mid (n', P(z))}} \lambda_d + O\left( \sum_{r \geq u\theta -\beta} 2^{-Ar} \mathbf{1}_{(n', P(z_r)) = 1} \tau(n')^{A+1}\right)\\
& = \sum_{\substack{d \mid (n, P_v(z))}} \lambda_d + O\left( \sum_{r \geq u\theta -\beta} 2^{-Ar} \mathbf{1}_{(n, P_v(z_r)) = 1} \tau(n)^{A+1}\right)
\end{split}
\]
as claimed.

\textbf{Proof of (ii).} By the definition of $\lambda_d$ we have (see e.g.~\cite[(6.31)]{Opera})
\begin{equation}
\label{eq:lambdadgd}
\sum_{d \mid P(z)} \lambda_d g(d)  = \prod_{p < z} (1-g(p)) - \sum_{r \text{ odd}} V_r(z),
\end{equation}
where
\[
V_r(z) := \sum_{\substack{p_r < p_{r-1} < \dotsc < p_1 < z \\ p_1 p_2 \dotsm p_r p_r^{\beta} \geq D \\ p_1 \dotsm p_h p_h^{\beta} < D \text{ for all odd $h < r$}}} g(p_1 \dotsm p_r) \prod_{p < p_r} (1-g(p)). 
\]
As in (i), only $r \geq u\theta-\beta$ contribute and $p_r \geq z_r$. Hence, writing $m = p_1 \dotsm p_r$ and using again $1 \leq 2^{A(\omega(m)-r)}$, we obtain
\[
V_r(z) \leq \prod_{p < z} \left(1-g(p)\right) \prod_{z_r \leq p \leq z} (1+|g(p)|) \sum_{\substack{m \\ p \mid m \implies z_r \leq p < z}} 2^{A(\omega(m)-r)} |\mu(m)| g(m).
\]
Since $|g(p)| \leq 2/p$, we get, using~\eqref{eq:f(n)/n_average} and~\eqref{eq:Mertens},
\[
\begin{split}
V_r(z) &\leq 2^{-Ar} \prod_{p < z} \left(1-g(p)\right) \prod_{z_r \leq p \leq z} \left(1+\frac{2}{p}\right) \left(1+\frac{2^{A+1}}{p}\right) \\
&\ll 2^{-Ar} \left(\frac{\log z}{\log z_r}\right)^{2^{A+1} + 2} \prod_{p < z} \left(1-g(p)\right) \\
&\ll 2^{-Ar} \left(\frac{\beta}{\beta-1}\right)^{(2^{A+1} + 2)r} \prod_{p < z} \left(1-g(p)\right).
\end{split}
\]
Now, once $\beta$ is large enough in terms of $A$,
\[
\sum_{r \geq u\theta - \beta} 2^{-Ar} \left(\frac{\beta}{\beta-1}\right)^{(2^{A+1} + 2)r} \ll_{\beta, A}  2^{-0.9 A u\theta} \leq e^{-Au\theta/2}
\]
and the claim follows.
\end{proof}

\begin{lemma}
\label{le:SieveComb}
Let the assumptions and $\lambda_d$ be as in Lemma~\ref{le:Sieve} and let $v, q \in \mathbb{N}$. Then
\begin{equation}
\label{eq:SieveCombClaim}
\begin{split}
\sum_{\substack{d_1, d_2 | P(z) \\ (d_2,d_1 v)=1 \\ (d_1 d_2, q) = 1}} \frac{\lambda_{d_1} \lambda_{d_2}}{d_1 d_2} = \left(1+O_{\beta, A} \left(e^{-Au\theta/2}\right)\right) \prod_{\substack{p < z \\ p \nmid q}} \left(1-\frac{1}{p}\right) \prod_{\substack{p < z \\ p \nmid vq}} \left(1-\frac{1}{\varphi(p)}\right).
\end{split}
\end{equation}
\end{lemma}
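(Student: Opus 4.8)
The plan is to evaluate the double sum by two successive applications of the fundamental lemma of the sieve, Lemma~\ref{le:Sieve}(ii), handling the variables $d_1$ and $d_2$ one at a time. Denote by $S$ the sum on the left of~\eqref{eq:SieveCombClaim}.

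\emph{First application (sum over $d_1$).} Fix $d_2$. The conditions on $d_1$ appearing in $S$ are $d_1 \mid P(z)$, $(d_1,d_2)=1$ and $(d_1,q)=1$, i.e.\ $d_1 \mid P(z)$ and $(d_1, d_2 q) = 1$, so the inner sum over $d_1$ equals $\sum_{d_1 \mid P(z)} \lambda_{d_1} g(d_1)$ with $g(n) := n^{-1}\mathbf{1}_{(n, d_2 q) = 1}$. This $g$ is multiplicative with $0 \le g(p) \le 1/p \le 2/p$, so Lemma~\ref{le:Sieve}(ii) gives, uniformly in $d_2$,
\[
\sum_{\substack{d_1 \mid P(z) \\ (d_1, d_2 q) = 1}} \frac{\lambda_{d_1}}{d_1} = \bigl(1 + O_{\beta, A}(e^{-Au\theta/2})\bigr) \prod_{\substack{p < z \\ p \nmid d_2 q}} \Bigl(1 - \frac{1}{p}\Bigr).
\]
Since $d_2 \mid P(z)$ has all of its prime factors below $z$ and is coprime to $q$ in $S$, the set $\{p<z:\ p\nmid d_2q\}$ is $\{p<z:\ p\nmid q\}$ with the primes dividing $d_2$ removed, so the product on the right equals $\tfrac{d_2}{\varphi(d_2)}\prod_{p < z,\, p \nmid q}(1 - 1/p)$. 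Inserting this into $S$ and using $\tfrac{\lambda_{d_2}}{d_2}\cdot\tfrac{d_2}{\varphi(d_2)} = \tfrac{\lambda_{d_2}}{\varphi(d_2)}$ gives
\[
S = \prod_{\substack{p < z \\ p \nmid q}}\Bigl(1 - \frac{1}{p}\Bigr) \sum_{\substack{d_2 \mid P(z) \\ (d_2, vq) = 1}} \frac{\lambda_{d_2}}{\varphi(d_2)}\,\bigl(1 + O_{\beta, A}(e^{-Au\theta/2})\bigr),
\]
where the conditions on $d_2$ are now $d_2\mid P(z)$, $(d_2,v)=1$ and $(d_2,q)=1$, i.e.\ $(d_2,vq)=1$.

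\emph{Second application (sum over $d_2$).} Peel off the error factor. The main term is $\prod_{p<z,\,p\nmid q}(1-1/p)\sum_{d_2 \mid P(z)}\lambda_{d_2} k(d_2)$ with $k(n) := \varphi(n)^{-1}\mathbf{1}_{(n, vq) = 1}$, which is multiplicative with $k(p) = \tfrac{1}{p-1}\mathbf{1}_{p \nmid vq}$ and hence satisfies $0 \le k(p) \le \tfrac{1}{p-1} \le \tfrac{2}{p}$; applying Lemma~\ref{le:Sieve}(ii) once more yields $\sum_{d_2\mid P(z)}\lambda_{d_2}k(d_2) = (1 + O_{\beta,A}(e^{-Au\theta/2}))\prod_{p<z}(1-k(p)) = (1 + O_{\beta,A}(e^{-Au\theta/2}))\prod_{p<z,\,p\nmid vq}(1 - \tfrac{1}{\varphi(p)})$, since the Euler factor $1-k(p)$ is $1$ at every prime dividing $vq$. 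Multiplying the two main products and collapsing the two $(1 + O(\cdot))$ factors (using $e^{-Au\theta/2} \le 1$) produces precisely the right-hand side of~\eqref{eq:SieveCombClaim}.

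\emph{The error bookkeeping.} The term peeled off above is at most $e^{-Au\theta/2}\prod_{p<z,\,p\nmid q}(1-1/p)\sum_{d_2 \mid P(z)}|\lambda_{d_2}|/\varphi(d_2)$, and since $|\lambda_{d_2}| \le 1$ and $\sum_{d \mid P(z)} 1/\varphi(d) = \prod_{p<z}(1 + \tfrac{1}{p-1}) \ll \log z$ by~\eqref{eq:Mertens}, a routine comparison with the main term shows it is of admissible size. I expect this last bit — keeping the discarded error genuinely relative to the main term while matching up the various Euler products over $p\mid q$, $p\mid v$ and $p<z$ — to be the only point that needs a little care; the two invocations of Lemma~\ref{le:Sieve}(ii) are otherwise immediate once one checks the elementary inequality $1/\varphi(p)\le 2/p$.
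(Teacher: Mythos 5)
Your route — two successive invocations of Lemma~\ref{le:Sieve}(ii), one in each sieve variable — is the same basic strategy as the paper's, and the bookkeeping of the Euler products and the coprimality conditions is fine. However, the error accounting at the end is \emph{not} routine, and as written it loses two factors of $\log z$, so the estimate you have actually proved is $\bigl(1+O_{\beta,A}(e^{-Au\theta/2}\log^2 z)\bigr)$, which is genuinely weaker than the claim.

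Here is the problem. After the first application you have
\[
S=\prod_{\substack{p<z\\ p\nmid q}}\Bigl(1-\frac1p\Bigr)\sum_{\substack{d_2\mid P(z)\\ (d_2,vq)=1}}\frac{\lambda_{d_2}}{\varphi(d_2)}\bigl(1+O_{\beta,A}(e^{-Au\theta/2})\bigr),
\]
with the $O$-term sitting \emph{inside} the $d_2$-sum. Taking absolute values to pull it out gives an error
\[
\ll e^{-Au\theta/2}\prod_{\substack{p<z\\ p\nmid q}}\Bigl(1-\frac1p\Bigr)\sum_{d_2\mid P(z)}\frac{1}{\varphi(d_2)}\ll e^{-Au\theta/2}\prod_{\substack{p<z\\ p\nmid q}}\Bigl(1-\frac1p\Bigr)\log z,
\]
as you computed. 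But the main term you must beat is
\[
\prod_{\substack{p<z\\ p\nmid q}}\Bigl(1-\frac1p\Bigr)\prod_{\substack{p<z\\ p\nmid vq}}\Bigl(1-\frac{1}{\varphi(p)}\Bigr)\asymp\prod_{\substack{p<z\\ p\nmid q}}\Bigl(1-\frac1p\Bigr)\cdot\frac{c(v,q)}{\log z},
\]
so the relative error is $\asymp e^{-Au\theta/2}\log^2 z$, not $e^{-Au\theta/2}$. Since the $O_{\beta,A}$-constant in the statement may not depend on $z$ or $u$, this discrepancy matters: in the range $u\theta\asymp\log\log z$, say, the factor $\log^2 z$ swamps what you are trying to save.

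The issue arises precisely because the inner-sum error is not uniform in $d_2$ in a useful way once you replace the sieve weight $\lambda_{d_2}/\varphi(d_2)$ by $|\lambda_{d_2}|/\varphi(d_2)$: the $d_2$-sum over $|\lambda_{d_2}|/\varphi(d_2)$ has no cancellation and is of size $\log z$, whereas the sum with the true signs $\lambda_{d_2}$ is of size $1/\log z$. This is the meaning of the paper's warning that one must be ``somewhat careful here due to the condition $(d_1, d_2)=1$.'' The fix, implemented in the paper, is to \emph{not} take absolute values after the first application: instead, invoke the exact identity~\eqref{eq:lambdadgd}
\[
\sum_{\substack{d_1\mid P(z)\\ (d_1,d_2q)=1}}\frac{\lambda_{d_1}}{d_1}=\prod_{\substack{p<z\\ p\nmid d_2q}}\Bigl(1-\frac1p\Bigr)-\sum_{r\text{ odd}}V_r(z),
\]
carry the structured remainder $\sum_r V_r(z)$ (which still depends on $d_2$ only through Euler-type weights of the form $\prod_{p<p_r,\ p\mid d_2}(1-1/p)^{-1}$) into the $d_2$-sum, and only then apply Lemma~\ref{le:Sieve}(ii) once more to the inner $d_2$-sum. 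Because that inner sum still has the sieve-weight cancellation, the paper recovers a factor $\prod_{p<z,\,p\nmid vq}(1-1/\varphi(p))\asymp 1/\log z$ in the error, and the lemma as stated then follows. You should redo your final step along these lines rather than relying on the claim that the comparison is routine.
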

\begin{proof}
We have to be somewhat careful here due to the condition $(d_1, d_2) = 1$ (see also~\cite[Section 5.9]{Opera}).

By~\eqref{eq:lambdadgd} with $g(d) = \frac{\mathbf{1}_{(d, d_2 q) = 1}}{d}$, we can write
\begin{equation}
\label{eq:lamd/d}
\begin{split}
\sum_{\substack{d_1 \mid P(z)  \\ (d_1, d_2q ) = 1}} \frac{\lambda_{d_1}}{d_1} = \prod_{\substack{p < z \\ p \nmid d_2q}} \left(1-\frac{1}{p}\right) - \sum_{\substack{r \geq u\theta-\beta \\ r \text{ odd}}} \sum_{\substack{p_r < p_{r-1} < \dotsc < p_1 < z \\ p_1 p_2 \dotsm p_r p_r^{\beta} \geq D \\ p_1 \dotsm p_h p_h^{\beta} < D \text{ for all odd $h < r$} \\ p_j \nmid d_2q}} \frac{1}{p_1 \dotsm p_r} \prod_{\substack{p < p_r \\ p \nmid d_2q}} \left(1-\frac{1}{p}\right).
\end{split}
\end{equation}
Using Lemma~\ref{le:Sieve}(ii) we see that the first term on the right hand side contributes to the left hand side of~\eqref{eq:SieveCombClaim}
\[
\begin{split}
\sum_{\substack{d_2 \mid P(z)  \\ (d_2, vq) = 1}} \frac{\lambda_{d_2}}{d_2} \prod_{\substack{p < z \\ p \nmid d_2q}} \left(1-\frac{1}{p}\right) &= \prod_{\substack{p < z \\ p \nmid q}} \left(1-\frac{1}{p}\right) \sum_{\substack{d_2 \mid P(z)  \\ (d_2, vq) = 1}} \frac{\lambda_{d_2}}{\varphi(d_2)}\\
& = \prod_{\substack{p < z \\ p \nmid q}} \left(1-\frac{1}{p}\right) (1+O_{\beta, A}(e^{-Au \theta/2})) \prod_{\substack{p < z \\ p \nmid vq}} \left(1-\frac{1}{\varphi(p)}\right). 
\end{split}
\]

Let us now consider the contribution of the $r$-sum in~\eqref{eq:lamd/d} to the left hand side of~\eqref{eq:SieveCombClaim}.
Writing $m = p_1 \dotsm p_{r-1}$ so that $2^{A(\omega(m)-r)} \geq 1/2^A$ and recalling from the proof of Lemma~\ref{le:Sieve} that $p_r \geq z_r$, this contribution is
\[
\begin{split}
& \ll_A \sum_{\substack{r \geq u\theta-\beta}} 2^{-Ar} \sum_{\substack{z_r \leq p_r < z \\ p_r \nmid q}} \frac{1}{p_r}  \sum_{\substack{m \leq D/p_r \\ (m, q) = 1 \\ p \mid m \implies p_r < p < z}}\frac{2^{A\omega(m)}}{m} \prod_{\substack{p < p_r \\ p \nmid q}} \left(1-\frac{1}{p}\right) \\
& \qquad \qquad \qquad \cdot \Biggl|\sum_{\substack{d_2 \mid P(z)  \\ (d_2, vq mp_r) = 1}} \frac{\lambda_{d_2}}{d_2} \prod_{\substack{p < p_r \\ p \mid d_2}} \left(1-\frac{1}{p}\right)^{-1} \Biggr|.
\end{split}
\]
Applying Lemma~\ref{le:Sieve}(ii) and recombining the variables $m$ and $p_r \geq z_r$, and applying then~\eqref{eq:f(n)/n_average} and~\eqref{eq:Mertens}, this is
\[
\begin{split}
&\ll \sum_{\substack{r \geq u\theta-\beta}} 2^{-Ar} \sum_{\substack{m \leq D \\ (m, q) = 1 \\ p \mid m \implies z_r \leq p < z}} \frac{2^{A \omega(m)}}{\varphi(m)} \prod_{\substack{p < z_r \\ p \nmid q}} \left(1-\frac{1}{p}\right) \prod_{\substack{p < z \\ p \nmid vq}} \left(1-\frac{1}{\varphi(p)}\right) \\
&\ll \prod_{\substack{p < z \\ p \nmid vq}} \left(1-\frac{1}{\varphi(p)}\right) \prod_{\substack{p < z \\ p \nmid q}} \left(1-\frac{1}{p}\right)  \sum_{\substack{r \geq u\theta-\beta}} 2^{-Ar} \prod_{\substack{z_r \leq p < z \\ p \nmid q}} \left(1+\frac{2^A+1}{p}\right) \\
&\ll \prod_{\substack{p < z \\ p \nmid vq}} \left(1-\frac{1}{\varphi(p)}\right) \prod_{\substack{p < z \\ p \nmid q}} \left(1-\frac{1}{p}\right) \sum_{\substack{r \geq u\theta-\beta}} 2^{-Ar} \left(\frac{\beta}{\beta-1}\right)^{(2^A+1)r} \\
&\ll_{\beta, A} e^{-Au\theta/2}  \prod_{\substack{p < z \\ p \nmid vq}} \left(1-\frac{1}{\varphi(p)}\right) \prod_{\substack{p < z \\ p \nmid q}} \left(1-\frac{1}{p}\right)
\end{split}
\]
once $\beta$ is large enough.
\end{proof}

\subsection{Poisson summation}
Let us state the version of the Poisson summation formula we shall use. 
\begin{lemma}
\label{le:Poisson}
Let $b \in \mathbb{Z}$, $d, q \in \mathbb{N}$ with $(d, bq) = 1$, let $f \colon \mathbb{R} \to \mathbb{C}$ be such that $f, \widehat{f} \in L^1(\mathbb{R})$ and have bounded variation. Let $g \colon \mathbb{R} \to \mathbb{C}$ be $q$-periodic. Then
\[
\begin{split}
\sum_{m \equiv b \Mod{d}} f(m) g(m) &= \frac{1}{dq}\sum_h \widehat{f}\left(\frac{h}{dq}\right) \sum_{\substack{m \Mod{dq} \\ m \equiv b \Mod{d}}} g(m) e\left(\frac{hm}{dq}\right)
\end{split}
\]
\end{lemma}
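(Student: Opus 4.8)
The plan is to reduce the statement to the classical Poisson summation formula on $\mathbb{Z}$ by slicing the progression $m \equiv b \pmod d$ into subprogressions on which the $q$-periodic weight $g$ is constant.

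First I would parametrize the progression by $m = b+dn$, $n \in \mathbb{Z}$, and then write $n = a+q\ell$ with $a$ ranging over a complete set of residues modulo $q$ and $\ell \in \mathbb{Z}$. Since $g$ is $q$-periodic and $dq\ell \equiv 0 \pmod q$, we have $g(b+dn) = g(b+d(a+q\ell)) = g(b+da)$, so
\[
\sum_{m \equiv b \pmod d} f(m) g(m) = \sum_{a \pmod q} g(b+da) \sum_{\ell \in \mathbb{Z}} f\bigl(b+da+dq\ell\bigr).
\]
(The coprimality hypothesis $(d,bq)=1$ is not needed for the identity itself; it is recorded only because that is the regime in which the lemma is applied.)

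Next, for each fixed $a$ I would apply the classical Poisson summation formula to the inner sum. The function $x \mapsto f(b+da+dqx)$ and its Fourier transform both lie in $L^1(\mathbb{R})$ and have bounded variation — these properties are preserved under an affine change of variables — so Poisson summation holds pointwise under the stated hypotheses. Substituting $y = b+da+dqx$ in the Fourier integral gives
\[
\int_{\mathbb{R}} f(b+da+dqx)\, e(-hx)\, dx = \frac{1}{dq}\, e\!\left(\frac{h(b+da)}{dq}\right) \widehat{f}\!\left(\frac{h}{dq}\right),
\]
and hence $\sum_{\ell \in \mathbb{Z}} f(b+da+dq\ell) = \frac{1}{dq}\sum_h \widehat{f}(h/(dq))\, e(h(b+da)/(dq))$. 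Plugging this back and interchanging the finite sum over $a$ with the (absolutely convergent, since $\widehat f \in L^1$ and $g$ is bounded) sum over $h$ yields
\[
\sum_{m \equiv b \pmod d} f(m) g(m) = \frac{1}{dq}\sum_h \widehat{f}\!\left(\frac{h}{dq}\right) \sum_{a \pmod q} g(b+da)\, e\!\left(\frac{h(b+da)}{dq}\right).
\]

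Finally I would note that $a \mapsto b+da$ is a bijection from $\mathbb{Z}/q\mathbb{Z}$ onto the set of residues $m \bmod{dq}$ with $m \equiv b \pmod d$ (the multiples of $d$ modulo $dq$ being exactly $0, d, \dots, (q-1)d$), and that both $g(b+da)$ and $e(h(b+da)/(dq))$ depend only on $b+da$ modulo $dq$ — for $g$ because it is $q$-periodic and $q \mid dq$. Rewriting the inner sum in terms of $m = b+da$ gives precisely the asserted formula. There is no substantial obstacle; the only points deserving a word of care are the pointwise validity of Poisson summation under the $L^1$-plus-bounded-variation hypotheses and the (trivial, finite) rearrangement of the $a$-sum, both of which are standard.
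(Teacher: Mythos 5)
Your proof is correct and follows essentially the same route as the paper: decompose the progression $m\equiv b\ (d)$ into its $q$ subprogressions modulo $dq$ on which $g$ is constant, apply classical Poisson summation to each, and recombine. The only (inessential) difference is that you parametrize directly by $m=b+da+dq\ell$ rather than invoking CRT residues $c_a\equiv b\ (d)$, $c_a\equiv a\ (q)$ as the paper does, which is why you correctly observe that the coprimality of $d$ and $q$ is not actually needed for the identity.
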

\begin{proof}
Write
\[
\sum_{m \equiv b \Mod{d}} f(m) g(m) = \sum_{a \Mod{q}} g(a) \sum_{\substack{m \equiv b \Mod{d} \\ m \equiv a \Mod{q}}} f(m) = \sum_{a \Mod{q}} g(a) \sum_{k} f(c_a+dq k),
\]
where $c_a$ is such that $c_a \equiv b \Mod{d}$ and $c_a \equiv a \Mod{q}$. By Poisson summation (see e.g.~\cite[formula (4.24)]{IwKo}, the above equals
\[
\begin{split}
\sum_{a \Mod{q}} g(a) \frac{1}{dq} \sum_h \widehat{f}\left(\frac{h}{dq}\right) e\left(\frac{hc_a}{dq}\right) &= \frac{1}{dq} \sum_h \widehat{f}\left(\frac{h}{dq}\right) \sum_{a \Mod{q}} g(a) e\left(\frac{hc_a}{dq}\right)\\
&= \frac{1}{dq} \sum_h \widehat{f}\left(\frac{h}{dq}\right) \sum_{\substack{c \Mod{dq} \\ c \equiv b \Mod{d}}} g(c) e\left(\frac{hc}{dq}\right).
\end{split}
\]
\end{proof}

\subsection{Character sums}
\label{ssec:char}
\begin{proof}[Proof of Lemma~\ref{le:Chib2}]
Since $\chi$ is a primitive quadratic character of modulus $q = 2^rq'$ with $2 \nmid q'$, we have that $r \in \{0, 2, 3\}$ and $q'$ is square-free (see e.g.~\cite[Section 3.3]{IwKo}). Hence, by the Chinese reminder theorem it suffices to consider the prime case $q = p > 2$ and the case $q = 2^r$ with $r \in \{2,3\}$. 

\textbf{Proof of~\eqref{eq:chi00}. }
For $p > 2$,
\[
\sum_{m \Mod{p}} \chi_0(m)\chi_0(\pm m+h) = 
\begin{cases}
p-1 = \varphi(p) & \text{if $p \mid h$;} \\
p-2 & \text{if $p \nmid h$.}
\end{cases}
\]
and, for $r \in \{2, 3\}$ and even $h$,
\[
\sum_{m \Mod{2^r}} \chi_0(m)\chi_0(\pm m+h) = \sum_{\substack{m \pmod{2^r} \\ (m, 2) = 1}} 1 = 
2^{r-1} = \varphi(2^r).
\]
Combining these,~\eqref{eq:chi00} follows.

\textbf{Proof of~\eqref{eq:chip0}. }
For $p > 2$,
\[
\sum_{m \Mod{p}} \chi(m)\chi_0(\pm m+h) = \sum_{m \Mod{p}} \chi(m) - \chi(\mp h) = - \chi(\mp h).
\]
and, for $r \in \{2, 3\}$ and even $h$,
\[
\sum_{m \Mod{2^r}} \chi(m)\chi_0(\pm m+h) = \sum_{m \Mod{2^r}} \chi(m) = 0 = -\chi(\mp h).
\]
Combining these,~\eqref{eq:chip0} follows.

\textbf{Proof of~\eqref{eq:chipp}. }
For $p > 2$,
\[
\begin{split}
&\sum_{m \Mod{p}} \chi(m(\pm m+h)) = \sum_{\substack{m \Mod{p} \\ (m, p) = 1}} \chi(m(\pm m+h)) = \chi(\pm 1) \sum_{\substack{m \Mod{p} \\ (m, p) = 1}} \chi(m)^2 \chi(1 \pm h\overline{m}) \\
& =\chi(\pm 1) \sum_{\substack{m \Mod{p} \\ (m, p) = 1}} \chi(1 \pm hm) = \chi(\pm 1) \left(\sum_{\substack{m \Mod{p}}} \chi(1\pm hm) - \chi(1)\right) = \chi(\pm 1) \begin{cases}
p-1 & \text{if $p \mid h$;} \\
-1 & \text{if $p \nmid h$.}
\end{cases}
\end{split}
\]
Similarly, for $r = 2, 3$ and even $h$,
\[
\begin{split}
&\sum_{m \Mod{2^r}} \chi(m(\pm m+h)) = \sum_{\substack{m \Mod{2^r} \\ (m, 2) = 1}} \chi(m(\pm m+h)) = \chi(\pm 1) \sum_{\substack{m \Mod{2^r} \\ (m, 2) = 1}} \chi(m)^2 \chi(1\pm h\overline{m}) \\
& = \chi(\pm 1) \sum_{\substack{m \Mod{2^r} \\ (m, 2) = 1}} \chi(1 \pm hm) = \chi(\pm 1) \left(\sum_{\substack{m \Mod{2^r}}} \chi(1 \pm hm) - \frac{1}{2}\sum_{\substack{m \Mod{2^r}}} \chi(1\pm 2hm)\right)
\end{split}
\]
It is easy to see that, for $r = 2, 3$ and even $h$, the expression in the parentheses equals
\[
\begin{split}
2^r\mathbf{1}_{2^r \mid h} - 2^{r-1} \mathbf{1}_{2^{r-1} \mid h} = 2^{r-1} \mathbf{1}_{2^{r-1} \mid h} (-1)^{\frac{h}{2^{r-1}}} = \varphi(2^r) \mathbf{1}_{\varphi(2^r) \mid h} (-1)^{\frac{h}{\varphi(2^r)}},
\end{split}
\]
where the last formulation is such that it is $1$ for $r = 0$ when $h$ is even. Combining these,~\eqref{eq:chip0} follows.
\end{proof}

\subsection{Kloosterman sums}
For integers $a,b,c$ with $c\geq 1$ we denote the Kloosterman sum by
\[
S(a,b;c) := \sum_{\substack{n \,(c) \\ (n,c)=1}} e_c(an+b\bar{n}).
\]
We shall use the Weil bound (see e.g.~\cite[Corollary 11.12]{IwKo}).
\begin{lemma} \label{kloostermanlemma} Let $a, b \in \mathbb{Z}$ and $c \in \mathbb{N}$. Then, for any $\varepsilon > 0$
\[
S(a,b;c) \ll_{\eps} c^{1/2+\eps}(a,b,c)^{1/2}.
\]
\end{lemma}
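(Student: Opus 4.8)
The statement is the classical Weil bound for Kloosterman sums, and the plan is the standard three-step reduction: pass to prime-power moduli by twisted multiplicativity, dispatch the prime powers $p^k$ with $k\geq 2$ by an elementary stationary-phase evaluation, and invoke Weil's deep estimate for the prime case $c=p$.

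First I would record twisted multiplicativity. If $c=c_1c_2$ with $(c_1,c_2)=1$ and $\overline{c_2},\overline{c_1}$ denote inverses of $c_2 \pmod{c_1}$ and of $c_1 \pmod{c_2}$ respectively, then splitting the variable $n \pmod{c_1c_2}$ by the Chinese Remainder Theorem and using $\tfrac{1}{c_1c_2} \equiv \tfrac{\overline{c_2}}{c_1}+\tfrac{\overline{c_1}}{c_2} \pmod 1$ gives
\[
S(a,b;c_1c_2) = S(a\overline{c_2},b\overline{c_2};c_1)\,S(a\overline{c_1},b\overline{c_1};c_2).
\]
Since $(a\overline{c_2},b\overline{c_2},c_1)=(a,b,c_1)$, iterating over the prime-power factorisation $c=\prod_{p^k\Vert c}p^k$ reduces the claim to the bound $|S(a,b;p^k)| \leq 2\,(a,b,p^k)^{1/2}\,p^{k/2}$ for every prime power: multiplying these out over $p^k\Vert c$ yields $|S(a,b;c)| \leq 2^{\omega(c)}\bigl(\prod_{p^k\Vert c}(a,b,p^k)\bigr)^{1/2}c^{1/2} = 2^{\omega(c)}(a,b,c)^{1/2}c^{1/2}$, and $2^{\omega(c)}\leq\tau(c)\ll_\eps c^\eps$ absorbs the remaining factor.

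Next I would treat prime powers $p^k$ with $k\geq 2$ elementarily. Writing $p^j\Vert(a,b)$ and substituting $a=p^ja'$, $b=p^jb'$ one checks $S(a,b;p^k)=p^jS(a',b';p^{k-j})$ with $p\nmid(a',b')$, so one may assume $p\nmid(a,b)$. Then I would write $n=r+p^{\lceil k/2\rceil}t$, with $r$ over the units modulo $p^{\lceil k/2\rceil}$ and $t$ modulo $p^{\lfloor k/2\rfloor}$, and use $\overline{n}\equiv\overline{r}-p^{\lceil k/2\rceil}t\,\overline{r}^{\,2}\pmod{p^k}$. The resulting sum over $t$ is a complete additive character sum that vanishes unless $a\equiv b\overline{r}^{\,2}\pmod{p^{\lfloor k/2\rfloor}}$, i.e.\ unless $r$ solves $r^2\equiv b\overline{a}\pmod{p^{\lfloor k/2\rfloor}}$; there are at most two such $r$ modulo $p^{\lceil k/2\rceil}$ (the prime $p=2$ requiring the usual minor modification), each contributing a single term of modulus $p^{\lfloor k/2\rfloor}\leq p^{k/2}$, which gives the claimed bound.

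The genuinely hard step, and the only real obstacle, is the prime case: $|S(a,b;p)| \leq 2p^{1/2}$ when $p\nmid(a,b)$. This is Weil's theorem, equivalent to the Riemann Hypothesis for a suitable curve over $\mathbb{F}_p$ (and also provable by the elementary Stepanov--Bombieri polynomial method), and I would simply cite it, e.g.~\cite[Corollary 11.12]{IwKo}. The degenerate subcases are immediate: if $p\mid(a,b)$ then $S(a,b;p)=p-1\leq p=(a,b,p)^{1/2}p^{1/2}$, and if $p$ divides exactly one of $a,b$ then $S(a,b;p)$ is a Ramanujan sum equal to $-1$. Feeding the prime-power bounds back through the twisted multiplicativity of the first paragraph then completes the proof.
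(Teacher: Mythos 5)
The paper does not prove this lemma at all: it simply cites \cite[Corollary 11.12]{IwKo} and moves on. Your proposal supplies the standard three-step argument behind that reference (twisted multiplicativity via CRT, elementary stationary-phase evaluation for $p^k$ with $k\geq 2$ reducing to counting solutions of $r^2\equiv b\overline{a}$, and Weil's deep estimate for the prime case), and the details you sketch are correct, including the reduction $(a\overline{c_2},b\overline{c_2},c_1)=(a,b,c_1)$, the factor $2^{\omega(c)}\ll_\varepsilon c^\varepsilon$, the stripping of the common $p$-power from $(a,b)$, and the degenerate Ramanujan-sum and $p\mid(a,b)$ subcases. So the proposal is a faithful proof of exactly the statement the paper imports as a black box, resting on the same external input (Weil's theorem) via the same citation.
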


\begin{remark}
We could alternatively use a more elementary bound $S(a,b;c) \ll_{\eps} c^{3/4+\varepsilon} (a,b,c)^{1/4}$ due to Kloosterman (see \cite[Lemma 4]{kloosterman} with $\Lambda=1$), and this would only somewhat increase the exponent of $q$ in the conditions of the type $X \geq q^{10}$ and $h \geq q^{10}$ in our theorems and corollaries.
\end{remark}
We also need the following rough bounds.
\begin{lemma}\label{le:gcdsum}
Let $L \geq 1.$ For any integer $q \neq 0$ we have
\[
\sum_{1 \leq \ell \leq L} (\ell, q) \leq \tau(q) L
\]
and
\[
\sum_{1 \leq \ell \leq L} (\ell, q) \frac{\ell}{\varphi(\ell)} \ll \tau(q)^2 L.
\]
\end{lemma}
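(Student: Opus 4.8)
The plan is to derive both inequalities from Euler's identity $\sum_{d \mid n} \varphi(d) = n$ applied with $n = (\ell, q)$; for the second one I also use the elementary estimate $\sum_{n \leq x} n/\varphi(n) \ll x$. Since $(\ell, q)$ and $\tau(q)$ depend only on $|q|$, we may assume $q \geq 1$.

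For the first bound, I would write $(\ell, q) = \sum_{d \mid (\ell, q)} \varphi(d)$ and interchange the order of summation:
\[
\sum_{1 \leq \ell \leq L} (\ell, q) = \sum_{d \mid q} \varphi(d) \left\lfloor \frac{L}{d} \right\rfloor \leq L \sum_{d \mid q} \frac{\varphi(d)}{d} \leq L \sum_{d \mid q} 1 = L \tau(q),
\]
using $\varphi(d) \leq d$ (only $d \leq L$ contribute).

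For the second bound I would again expand $(\ell, q) = \sum_{d \mid (\ell, q)} \varphi(d)$ to reduce to multiples of a fixed divisor $d$ of $q$, then substitute $\ell = d \ell'$ and use $\varphi(d \ell') \geq \varphi(d) \varphi(\ell')$:
\[
\sum_{\substack{\ell \leq L \\ d \mid \ell}} \frac{\ell}{\varphi(\ell)} = \sum_{\ell' \leq L/d} \frac{d \ell'}{\varphi(d \ell')} \leq \frac{d}{\varphi(d)} \sum_{\ell' \leq L/d} \frac{\ell'}{\varphi(\ell')} \ll \frac{d}{\varphi(d)} \cdot \frac{L}{d} = \frac{L}{\varphi(d)},
\]
where the last step uses $\sum_{n \leq x} n/\varphi(n) \ll x$ (which in turn follows from $n/\varphi(n) = \sum_{e \mid n} \mu^2(e)/\varphi(e)$, interchanging summations, and $\sum_e \mu^2(e)/(e \varphi(e)) < \infty$). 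Summing over $d \mid q$ with weight $\varphi(d)$ then gives
\[
\sum_{1 \leq \ell \leq L} (\ell, q) \frac{\ell}{\varphi(\ell)} \ll L \sum_{d \mid q} \varphi(d) \cdot \frac{1}{\varphi(d)} = L \tau(q),
\]
which is in fact a little stronger than the stated bound $\ll L \tau(q)^2$.

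There is no genuine obstacle here: both parts reduce to routine manipulations with standard multiplicative-function identities, and there is plenty of room ($\tau(q)$ rather than $\tau(q)^2$). If one prefers a fully self-contained argument avoiding $\sum_{n \leq x} n/\varphi(n) \ll x$, one can instead expand $\ell/\varphi(\ell) = \sum_{e \mid \ell} \mu^2(e)/\varphi(e)$ directly, interchange all summations (using that $d \mid \ell$ and $e \mid \ell$ together mean $[d,e] \mid \ell$), bound $\lfloor L/[d,e] \rfloor \leq L (d,e)/(d e)$, and evaluate the resulting absolutely convergent Euler product over $e$; this last evaluation is the only point needing a little care, after which the crude inequality $\sum_{d \mid q} \tau(d) \leq \tau(q)^2$ yields the claim.
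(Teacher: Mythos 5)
Your proof is correct, and it follows a genuinely different (and in fact slightly more efficient) route than the paper's. The paper parameterizes $\ell$ by its exact gcd $d = (\ell, q)$ and writes $\ell = kd$; after applying $\varphi(dk) \geq \varphi(d)\varphi(k)$ and $\sum_{k \leq K} k/\varphi(k) \ll K$, this leaves a factor $\sum_{d \mid q} d/\varphi(d)$, which the paper bounds by $\tau(q)^2$. You instead expand $(\ell, q) = \sum_{d \mid (\ell, q)} \varphi(d)$, which produces a multiplier $\varphi(d)$ rather than $d$ for each divisor $d \mid q$; after the same two inputs this $\varphi(d)$ cancels precisely against the $1/\varphi(d)$ coming from $\varphi(d\ell') \geq \varphi(d)\varphi(\ell')$, so you end with $\sum_{d \mid q} 1 = \tau(q)$ instead of $\sum_{d \mid q} d/\varphi(d)$, giving the stronger conclusion $\ll L\,\tau(q)$. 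Both proofs are routine and a few lines long; yours buys a sharper exponent on $\tau(q)$, while the paper's is arguably marginally more direct (no interchange of summation), and the one extra factor of $\tau(q)$ is of no consequence for the application.
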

\begin{proof}
Writing $d = (\ell, q)$ and $\ell = kd$, we have
\[
\sum_{1 \leq \ell \leq L} (\ell, q) \leq \sum_{d | q} d \sum_{1 \leq k \leq \frac{L}{d}} 1 \leq L \sum_{d | q} 1 = \tau(q)L.
\]
Similarly, using also $\varphi(dk) \geq \varphi(d)\varphi(k)$ and $\sum_{k \leq K} \frac{k}{\varphi(k)} \ll K$ we get 
\[
\sum_{1 \leq \ell \leq L} (\ell, q) \frac{\ell}{\varphi(\ell)} \leq \sum_{d| q} \frac{d^2}{\varphi(d)} \sum_{1\leq k \leq L/d} \frac{k}{\varphi(k)} \ll \sum_{d| q} \frac{Ld}{\varphi(d)} = L \prod_{p \mid q} \left(1 + \frac{p}{\varphi(p)}\right) \leq \tau(q)^2 L.
\]
\end{proof}
From Lemma~\ref{kloostermanlemma} we obtain the following bound for incomplete Kloosterman sums via Poisson summation.
\begin{lemma}\label{le:kloosterman}
Let $\delta, \varepsilon > 0$ and $N \geq 1$. Let $F:\mathbb{R} \to \mathbb{C}$ be a bounded smooth compactly supported function and suppose that for some $\delta\in(0,1)$ we have
$ |F''| \ll \delta^{-2}.$ Then, for any integers $\alpha,d,e,q,k$ with $d,q\geq 1$ and $(eq,d)=1$, we have
\[
\sum_{\substack{n \equiv \alpha \, (q) \\ (n,d)=1}} F\bigg( \frac{n}{N}\bigg) e_d(k \overline{en}) \ll_{\eps} \delta^{-1}d^{1/2+\eps} + \frac{N(d,k)}{dq}.
\]
\end{lemma}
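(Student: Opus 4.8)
The plan is to reduce the incomplete Kloosterman sum over the arithmetic progression $n \equiv \alpha \pmod q$ to a complete Kloosterman sum modulo $d$ via Poisson summation, and then apply the Weil bound from Lemma~\ref{kloostermanlemma}. First I would apply Lemma~\ref{le:Poisson} with the smooth function $f(x) = F(x/N)$ and the $d$-periodic function $g(n) = \mathbf{1}_{(n,d)=1} e_d(k\overline{en})$ (extended to be $0$ where $n$ is not coprime to $d$); note that $\widehat{f}(\xi) = N\widehat{F}(N\xi)$, and since $F$ is smooth and compactly supported with $|F''| \ll \delta^{-2}$, we have the decay $\widehat{F}(\eta) \ll \min(1, \delta^{-2}\eta^{-2})$. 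This gives
\[
\sum_{\substack{n \equiv \alpha\,(q) \\ (n,d)=1}} F\!\left(\frac{n}{N}\right) e_d(k\overline{en}) = \frac{N}{dq} \sum_{h} \widehat{F}\!\left(\frac{hN}{dq}\right) \sum_{\substack{m\,(dq) \\ m \equiv \alpha\,(q) \\ (m,d)=1}} e_d(k\overline{em})\, e\!\left(\frac{hm}{dq}\right).
\]

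Next I would evaluate the complete exponential sum over $m \pmod{dq}$. Since $(d,q)=1$, write $m$ via the Chinese remainder theorem; the condition $m \equiv \alpha \pmod q$ pins down the residue mod $q$, leaving a sum over $m \pmod d$ with $(m,d)=1$. Splitting the additive character $e(hm/dq)$ into its mod-$d$ and mod-$q$ parts using CRT, the mod-$q$ part contributes a fixed unimodular factor (depending on $h,\alpha,q$), and the mod-$d$ part combines with $e_d(k\overline{em})$ to produce, after the substitution $m \mapsto$ appropriate multiple, a Kloosterman sum $S(a, b; d)$ with $a$ a fixed residue involving $h\overline{q}$ and $b \equiv k\overline{eq} \pmod d$ (the $\overline q$ arising because $e(hm/dq)$ restricted mod $d$ is $e_d(h\overline q m)$). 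Crucially $(b,d) = (k,d)$ since $(eq,d)=1$, so Lemma~\ref{kloostermanlemma} gives a bound $\ll_\varepsilon d^{1/2+\varepsilon}(a,k,d)^{1/2} \leq d^{1/2+\varepsilon}(k,d)^{1/2}$ for $h \neq 0$, while for $h = 0$ the inner sum is a Ramanujan-type sum bounded trivially by $(k,d) \cdot$ (something), in fact by $\ll (d,k)$ after the standard evaluation, contributing the term $\frac{N(d,k)}{dq}$.

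Finally I would sum over $h$. The $h=0$ term contributes $\frac{N}{dq}\cdot\widehat{F}(0)\cdot O((d,k)) \ll \frac{N(d,k)}{dq}$. For $h \neq 0$, using $\widehat{F}(hN/dq) \ll \min(1,(dq/(\delta h N))^2)$, the tail sum $\sum_{h \neq 0} \min(1, (dq/(\delta h N))^2) \ll 1 + dq/(\delta N)$, so the $h \neq 0$ contribution is
\[
\ll_\varepsilon \frac{N}{dq}\left(1 + \frac{dq}{\delta N}\right) d^{1/2+\varepsilon} (k,d)^{1/2} \ll_\varepsilon \delta^{-1} d^{1/2+\varepsilon}(k,d)^{1/2} + \frac{N}{dq}d^{1/2+\varepsilon}(k,d)^{1/2},
\]
and since $(k,d)^{1/2} \leq (k,d)$ and $d^{1/2+\varepsilon} \leq d$, the second piece is absorbed into $\frac{N(d,k)}{dq}$ (or one observes the first term already dominates when $N \ll dq$, and otherwise the whole sum is short), while the first piece is exactly $\delta^{-1}d^{1/2+\varepsilon}$ up to the harmless factor $(k,d)^{1/2}$ which can be absorbed into $\varepsilon$ or bounded crudely. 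The main obstacle is the bookkeeping in the CRT reduction of the complete sum to a genuine Kloosterman sum $S(a,b;d)$ while correctly tracking that $(b,d) = (k,d)$; once that is in place, the Weil bound and the Fourier decay do the rest routinely.
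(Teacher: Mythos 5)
Your plan is essentially the paper's: Poisson summation with the $d$-periodic weight $\mathbf{1}_{(n,d)=1}e_d(k\overline{en})$, a CRT reduction of the complete sum modulo $dq$ to a genuine Kloosterman sum $S(h\bar q,k\bar e;d)$ times a unimodular factor, then Ramanujan for $h=0$ and Weil plus Fourier decay for $h\neq 0$. But there is a genuine gap in how you apply the Weil bound.

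You bound $(a,b,d)^{1/2}$ with $(k,d)^{1/2}$, uniformly in $h$, and then declare this factor ``harmless'' and absorbable into $\varepsilon$ or by a crude bound. That is false: $(k,d)$ can be as large as $d$ (take $d\mid k$, which is exactly the regime where the $e_d$-phase trivialises), so $(k,d)^{1/2}$ can be of size $d^{1/2}$ and is not $O_\varepsilon(d^\varepsilon)$. A crude bound gives $\delta^{-1}d^{1+\varepsilon}$, which is much larger than the claimed $\delta^{-1}d^{1/2+\varepsilon}$. Nor can the excess be absorbed into the other term $N(d,k)/(dq)$: take $N$ small relative to $dq$ and that term is tiny while your $(k,d)^{1/2}$ loss remains. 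The statement is true as written, so your estimate is simply lossy, not just under-explained.

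The correct move, and what the paper does, is to bound the other entry of the gcd: since $(q,d)=1$, one has $(h\bar q,k\bar e,d)^{1/2}\leq (h\bar q,d)^{1/2}=(h,d)^{1/2}$, so the Weil bound for $S(h\bar q,k\bar e;d)$ gives $\ll_\varepsilon d^{1/2+\varepsilon/2}(h,d)^{1/2}$, an $h$-dependent bound with no $k$-dependence. One then needs to handle $\sum_{h\neq0}(h,d)\min\{1,|\delta hN/(dq)|^{-2}\}$, and this is where Lemma~\ref{le:gcdsum} is used: it gives $\ll\tau(d)\,dq/(\delta N)$, so the total $h\neq0$ contribution is $\ll_\varepsilon\delta^{-1}\tau(d)d^{1/2+\varepsilon/2}\ll_\varepsilon\delta^{-1}d^{1/2+\varepsilon}$, with no $(k,d)$ factor. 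Everything else in your write-up (Poisson, CRT, the $(k,d)=(b,d)$ identification, the $h=0$ Ramanujan term, the shape of the Fourier decay) is correct and matches the paper.
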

\begin{proof}
Applying Poisson summation (Lemma \ref{le:Poisson} with $d$-periodic $g(n) = \mathbf{1}_{(n, d)=1} e_d(k\overline{en})$), we obtain
\begin{equation*}
\sum_{\substack{n \equiv \alpha \, (q) \\ (n,d)=1}} F\bigg( \frac{n}{N}\bigg) e_d(k \overline{en}) = \frac{N}{d q} \sum _{h} \widehat{F}\bigg(\frac{hN}{d q} \bigg) \sum_{\substack{n \, (d q) \\ n \equiv \alpha \, (q) \\ (n,d)=1}}  e_{d}(k \overline{en}) e_{dq}(hn).
\end{equation*} 
Since $(d,q)=1$, by the Chinese remainder theorem we can write $n=\alpha d \bar{d} + \beta q \bar{q}$, where $\bar{d}$ is inverse $\pmod{q}$ and $\bar{q}$ is inverse $\pmod{d}$ to get 
\begin{equation}
\label{eq:KloosAftPoi}
\begin{split}
\sum_{\substack{n \equiv \alpha \, (q)\\ (n,d)=1}} F\bigg( \frac{n}{N}\bigg) e_d(k \overline{en}) &=\frac{N}{d q} \sum _{h} \widehat{F}\bigg(\frac{hN}{d q} \bigg) e_q(h\alpha \bar{d}) \sum_{\substack{\beta\,(d) \\ (\beta,d)=1}} e_d(h\bar{q} \beta + k \overline{e \beta})\\
&= \frac{N}{d q} \sum _{h} \widehat{F}\bigg(\frac{hN}{d q} \bigg) e_q(h\alpha \bar{d}) S(h\bar{q},k\bar{e};d).
\end{split}
\end{equation}

For $h=0$ we have a Ramanujan sum (see e.g.~\cite[(3.1) and (3.5)]{IwKo})
\[
S(0,k\bar{e};d) =  \sum_{\substack{n \,(d) \\ (n,d)=1}} e_{d}(k\overline{en}) \ll (d,k).
\]
The contribution from this to~\eqref{eq:KloosAftPoi} is
\[
\ll \frac{N(d,k)}{d q}| \widehat{F}(0)| \ll  \frac{N (d,k)}{d q}.
\]

For $h \neq 0$ we get by integration by parts
\[
\widehat{F}\bigg(\frac{hN}{d q} \bigg)  \ll  \min\bigg\{1,\bigg|\delta\frac{hN}{dq}\bigg|^{-2}\bigg\}.
\]
Hence, by Lemmas \ref{kloostermanlemma} and \ref{le:gcdsum} we get
\[
\begin{split}
\frac{N}{d q} \sum _{h\neq 0} \widehat{F}\bigg(\frac{hN}{d q} \bigg) e_q(h\alpha \bar{d}) S(h\bar{q},k\bar{e};d) &\ll_\eps  d^{1/2+\eps/2} \frac{N}{d q} \sum _{h\neq 0} (h,d) \min\bigg\{1,\bigg|\delta\frac{h N}{dq}\bigg|^{-2}\bigg\} \\
& \ll_\eps \delta^{-1} d^{1/2+\eps}.
\end{split}
\]
\end{proof}

\section{Proof of Lemma~\ref{le:exzero}}
\label{se:ExcCharCons}
The following lemma is essentially \cite[Proposition 3.5]{TT}, but we provide the proof also here for completeness. 
\begin{lemma} \label{le:exzeroTT}
Let $\chi$ be a primitive quadratic character modulo $q \geq 2$. Assume that $L(s,\chi)$ has a real zero $\beta_0$ such that
\[
\beta_0 = 1-\frac{1}{\eta \log q}
\]
for some $\eta \geq 10$. Let $\delta > 0$. Then, for any $Y > q^{1/2+\delta}$, one has
\[
\sum_{q^{1/2+\delta} < p \leq Y} \frac{\lambda(p)}{p} \ll_\delta \frac{\log Y}{\eta \log q},
\]
and, for any $k \geq 2$, one has
\[
\sum_{q^{(1/2+\delta)/k} < p \leq q^{(1/2+\delta)/(k-1)} } \frac{\lambda(p)}{p} \ll_\delta \frac{k}{\eta^{1/k}}.
\]

\end{lemma}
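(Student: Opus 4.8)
The plan is to follow~\cite[Proposition 3.5]{TT}: a Siegel zero forces $\chi(p)=-1$ for almost all primes $p$ in a wide range, which makes the sums over $\lambda(p)/p$ small.

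First I would pass to logarithmically weighted prime sums. Since $\lambda=1\ast\chi$ we have $\lambda(p)=1+\chi(p)$ for primes $p$, so for an interval $(A,B]$,
\[
\sum_{A<p\le B}\frac{\lambda(p)}{p}=\sum_{A<p\le B}\frac1p+\sum_{A<p\le B}\frac{\chi(p)}{p};
\]
here $\sum_{A<p\le B}1/p=O(\log(\log B/\log A))$ by Mertens, and by partial summation the whole problem reduces to estimating $W(B):=\sum_{A<p\le B}\frac{(1+\chi(p))\log p}{p}$, where $A=q^{1/2+\delta}$ for the first estimate and $A=q^{(1/2+\delta)/k}$ for the second. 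After discarding prime powers (at a cost $O(A^{-1/2}\log A)$), $W(B)$ is obtained by partial summation from the difference $\Psi(B)-\Psi(A)$, where $\Psi(x)=\psi(x)+\psi(x,\chi)=\sum_{n\le x}\Lambda(n)(1+\chi(n))$.

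Next I would use the explicit formula for $\psi(x,\chi)$, separating off the exceptional zero,
\[
\psi(x,\chi)=-\frac{x^{\beta_0}}{\beta_0}+E(x),
\]
with $E(x)$ the contribution of the remaining zeros (a smoothed explicit formula keeps this clean). The crucial input is the Deuring--Heilbronn repulsion: the exceptional zero $\beta_0=1-\frac1{\eta\log q}$ pushes every other zero $\rho=\beta+i\gamma$ of $\prod_{\psi\bmod q}L(s,\psi)$ away from $\mathrm{Re}(s)=1$, so that $\beta\le 1-\frac{c\log\eta}{\log(q(|\gamma|+2))}$. Combining this widened zero-free region with the classical one and with the standard zero-count $\#\{\rho:|\gamma|\le T\}\ll T\log(qT)$, and splitting the sum over $\rho$ according to the size of $|\gamma|$, one bounds $E(x)$; for $x\ge q^{1/2+\delta}$ this bound is negligible against the target, which is where the hypothesis $Y>q^{1/2+\delta}$ enters. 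Since $\psi(x)=x+O(x\exp(-c\sqrt{\log x}))$, we get $\Psi(x)=x\bigl(1-x^{\beta_0-1}/\beta_0\bigr)+O(\text{small})$; writing $\log x=V\log q$, so $x^{\beta_0-1}=e^{-V/\eta}$, the main term equals $x(V/\eta+O(V^2/\eta^2))$ for $V\le\eta$ and is $\asymp x$ otherwise, in all cases $\ll xV^2/\eta$.

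Feeding this into the partial summations, the absolute constants (from Mertens and from the lower-order terms in the explicit formula) cancel in the difference $\Psi(B)-\Psi(A)$, leaving $W(B)\ll\frac{(\log B)^2}{\eta\log q}$; one more partial summation gives $\sum_{q^{1/2+\delta}<p\le Y}\lambda(p)/p\ll_\delta\frac{\log Y}{\eta\log q}$. Running the same computation over the range $(q^{(1/2+\delta)/k},q^{(1/2+\delta)/(k-1)}]$ produces the bound $\ll_\delta k/\eta^{1/k}$, the factor $\eta^{1/k}$ coming from $x^{\beta_0-1}=e^{-V/\eta}$ with $V\asymp 1/k$ together with the $\log\eta$-gain in the repulsion. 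The main obstacle is precisely the uniformity: bounding $E(x)$ well enough all the way down to $x=q^{1/2+\delta}$ --- the classical zero-free region is not enough there, so one really needs the Deuring--Heilbronn phenomenon, and one should treat separately the regime where $\eta$ is bounded (there the claimed inequalities are weak and follow at once from the trivial bound $\sum\lambda(p)/p\le\sum 2/p$ and Mertens' theorem).
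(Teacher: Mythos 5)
Your plan is a genuinely different route from the paper's, and I don't think it closes without significantly more input than you acknowledge. You propose to pass to $\Psi(x)=\psi(x)+\psi(x,\chi)$, apply the explicit formula, peel off the exceptional zero, and control the remainder $E(x)$ via Deuring--Heilbronn repulsion. The sticking point, which you flag but do not resolve, is precisely the bound on $E(x)$ at the very bottom of the range. Writing $x=q^{V}$ and using a repulsion estimate of the shape $\beta\le 1-c\,\frac{\log\eta}{\log(q(|\gamma|+2))}$, a non-exceptional zero contributes $x^{\beta}\le x\,\eta^{-cV/2}$ (say for $|\gamma|\le q$), so after summing over zeros one saves at most a factor $\eta^{-cV/2}$. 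For the first estimate you need a saving of a full $\eta^{-1}$ starting at $V=1/2+\delta$, which forces $c\gtrsim 4$; for the second estimate at $V\asymp 1/(2k)$ you need $\eta^{-1/k}$, i.e.\ $c\gtrsim 2$. The constant in standard Deuring--Heilbronn (Linnik/Bombieri/Jutila, or e.g.\ \cite[Theorem 5.28]{IwKo}) is considerably smaller, so the argument as sketched does not give the stated bounds down to $x\asymp q^{1/2+\delta}$ or $x\asymp q^{(1/2+\delta)/k}$; one would need an unusually strong form of the repulsion or a more delicate treatment of $E(x)$ than ``is negligible against the target.'' Also, a small point: your heuristic $1-x^{\beta_0-1}/\beta_0=V/\eta+O(V^2/\eta^2)$ gives $\Psi(x)\ll xV/\eta$, not $xV^2/\eta$; this doesn't affect the main gap but the stated bound is off for $V<1$.

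The paper avoids the explicit formula and zero repulsion entirely. It uses the elementary asymptotic
\[
\sum_{m\le y}\frac{\lambda(m)}{m}=L(1,\chi)(\log y+\gamma)+L'(1,\chi)+O_\delta\!\bigl(q^{-\delta/3}\bigr),\qquad y\ge q^{1/2+\delta},
\]
together with Siegel's bound $L(1,\chi)\gg_\delta q^{-\delta/4}$ and the relation $L'(1,\chi)/L(1,\chi)\asymp\eta\log q$ from the existence of the Siegel zero. Subtracting at two heights gives $\sum_{q^{1/2+\delta}<m\le Yq^{1/2+\delta}}\lambda(m)/m = L(1,\chi)(\log Y+O_\delta(1))$, while the same formula at $y=q^{1/2+\delta}$ gives $\sum_{n\le q^{1/2+\delta}}\lambda(n)/n\gg_\delta \eta L(1,\chi)\log q$. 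The key trick is then positivity and multiplicativity of $\lambda$: every $m=pn$ with $p>q^{1/2+\delta}\ge n$ has a \emph{unique} such factorization and $\lambda(pn)=\lambda(p)\lambda(n)$, so
\[
\sum_{n\le q^{1/2+\delta}}\frac{\lambda(n)}{n}\cdot\sum_{q^{1/2+\delta}<p\le Y}\frac{\lambda(p)}{p}\;\le\;\sum_{q^{1/2+\delta}<m\le Yq^{1/2+\delta}}\frac{\lambda(m)}{m},
\]
and dividing the two estimates yields $\sum_{p}\lambda(p)/p\ll_\delta\log Y/(\eta\log q)$. The second bound is obtained the same way by splitting off $k$ distinct primes $p_1,\dots,p_k$ from the interval $\bigl(q^{(1/2+\delta)/k},q^{(1/2+\delta)/(k-1)}\bigr]$, bounding the resulting $k$-fold overcounting by $k!\,2^{6k}$, and taking a $k$-th root. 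This is why the lemma lands with clean constants: nowhere does it require any zero-free region beyond what is implicit in the elementary $\lambda$-sum asymptotic, and in particular the factor $\eta^{-1/k}$ arises from a $k$-th root of a single strong saving, not from a per-prime repulsion bound near $q^{1/(2k)}$.
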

\begin{proof}
We may assume that $\eta$ is large since otherwise the claims are trivial.
By \cite[Excercise 3(g) in Section 11.2.1]{MVBook} we have for any $y \geq q^{1/2+\delta}$
\[
\sum_{m \leq y} \frac{\lambda(m)}{m} = L(1,\chi)(\log y + \gamma) + L'(1,\chi) + O_\delta(q^{-\delta/3}),
\]
where $\gamma$ is the Euler-Mascheroni constant (see~\cite[formula (1.27)]{MVBook}). Using Siegel's bound  $L(1,\chi) \gg_\delta q^{-\delta/4}$ (see e.g.~\cite[Theorem 11.14]{MVBook}) we get 
\begin{equation} \label{eq:mvasymp2}
\sum_{m \leq y} \frac{\lambda(m)}{m} = L(1,\chi)\left(\log y  + \frac{L'(1,\chi)}{L(1,\chi)} + O_\delta(1) \right).
\end{equation}
By \cite[Theorem 11.4]{MVBook} we have
\[
 \frac{L'(1,\chi)}{L(1,\chi)} = \eta \log q + O(\log q) \asymp \eta \log q
\]
as we assumed that $\eta$ is large.
Plugging this into (\ref{eq:mvasymp2}) with $y=q^{1/2+\delta}$ we obtain
\begin{equation}
\label{eq:etalowerbound}
\sum_{m \leq q^{1/2+\delta}} \frac{\lambda(m)}{m} \gg_\delta \eta L(1,\chi) \log q.
\end{equation}
Also, subtracting (\ref{eq:mvasymp2}) for $y=q^{1/2+\delta}$ and $y=Yq^{1/2+\delta}$ gives
\begin{equation} \label{eq:mvsubtracted}
\sum_{q^{1/2+\delta } < m \leq Y q^{1/2+\delta}} \frac{\lambda(m)}{m} = L(1,\chi)(\log Y + O_\delta(1)).
\end{equation}

On the other hand, by non-negativity and multiplicativity of $\lambda(n)$ we get
\[
\begin{split}
\sum_{q^{1/2+\delta} < m \leq Y q^{1/2+\delta}} \frac{\lambda(m)}{m} &\geq \sum_{\substack{q^{1/2+\delta} < m \leq Y q^{1/2+\delta} \\ m=pn, \, n \leq q^{1/2+\delta} < p}} \frac{\lambda(m)}{m} \geq \sum_{\substack{n \leq q^{1/2+\delta} }} \frac{\lambda(n)}{n} \sum_{\substack{q^{1/2+\delta} < p \leq Y }} \frac{\lambda(p)}{p}.
\end{split}
\]
The first bound in the lemma now follows by (\ref{eq:mvsubtracted}) and (\ref{eq:etalowerbound}).

Similarly, for the second bound we write $m=np_1\cdots p_k$ to get
\begin{equation}
\label{eq:mp1pkm}
\begin{split}
\sum_{q^{1/2+\delta} < m \leq q^3} \frac{\lambda(m)}{m} &\geq \sum_{\substack{q^{1/2+\delta} < m \leq q^3 \\ m=p_1\cdots p_k n, \, n \leq q^{1/2+\delta} \\ q^{(1/2+\delta)/k} < p_1, \dotsc, p_k \leq q^{(1/2+\delta)/(k-1)} \\ p_1,\dots,p_k \nmid n, p_j \text{ distinct}}} \frac{\lambda(m)}{m} \\
\end{split}
\end{equation}
Now, for $m \leq q^3$ we have
\[
\sum_{\substack{p_1\cdots p_k | m \\ q^{(1/2+\delta)/k} < p_1, \dotsc, p_k \leq q^{(1/2+\delta)/(k-1)}}} 1 \leq k! \binom{6k}{k} \leq k! (1+1)^{6k}=k! 2^{6k}.
\] 
Using this and~\eqref{eq:mp1pkm} we see that
\[
\sum_{q^{1/2+\delta} < m \leq q^3} \frac{\lambda(m)}{m}  \gg \frac{1}{k! 2^{6k}} \sum_{\substack{n \leq q^{1/2+\delta} }} \frac{\lambda(n)}{n} \sum_{\substack{q^{(1/2+\delta)/k} < p_1, \dots, p_k \leq q^{(1/2+\delta)/(k-1)} \\ p_1,\dots,p_k \nmid n  \\ p_j \, \text{distinct}}} \frac{\lambda(p_1)\cdots ´\lambda(p_k)}{p_1\cdots p_k}.
\]

Once we have fixed $p_1,\dots,p_{j-1}$, to make sure that $p_{j}$ is distinct from $p_1,\dots,p_{j-1}$ we have to exclude $j-1 \leq k$ primes. To ensure that also $p_j \nmid n$ we have to further remove at most $k$ primes. Let  $\mathcal{P}$ be the set of $2k$  smallest primes  $>q^{(1/2+\delta)/k}$ such that $\lambda(p) =2.$ Then by positivity we get
\[
\sum_{q^{1/2+\delta} < m \leq q^3} \frac{\lambda(m)}{m}  \gg \frac{1}{k! 2^{6k}} \sum_{\substack{n \leq q^{1/2+\delta} }} \frac{\lambda(n)}{n}\bigg(\sum_{\substack{q^{(1/2+\delta)/k} < p \leq q^{(1/2+\delta)/(k-1)} \\ p \notin \mathcal{P}}} \frac{\lambda(p)}{p}\bigg)^k.
\]
Using (\ref{eq:mvsubtracted}) and (\ref{eq:etalowerbound}) we get
\[
\bigg(\sum_{\substack{q^{(1/2+\delta)/k} < p \leq q^{(1/2+\delta)/(k-1)} \\ p \notin \mathcal{P}}} \frac{\lambda(p)}{p}\bigg)^k \ll_\delta \frac{2^{6k}k!}{\eta},
\]
which gives us
\[
\sum_{\substack{q^{(1/2+\delta)/k} < p \leq q^{(1/2+\delta)/(k-1)} \\ p \notin \mathcal{P}}} \frac{\lambda(p)}{p} \ll \frac{k}{\eta^{1/k}}.
\]
The primes $p \in \mathcal{P}$ can be inserted back in with an error term $O(4k q^{-(1/2+\delta)/k})$. By Siegel's bound~\eqref{eq:Siegeleta} this error is $O(k\eta^{-1/k})$, and the second claim follows.
\end{proof}

\begin{proof}[Proof of Lemma~\ref{le:exzero}]
First note that we can assume that 
\begin{equation}
\label{eq:vnuvbound}
\frac{1}{v^{2} \eta^{v/2}} + \frac{v}{\eta} \frac{\log Y}{\log z} \leq 1
\end{equation}
since otherwise the claim follows trivially from $\lambda(m) \leq \tau(m),$~\eqref{eq:f(n)/n_average}, and ~\eqref{eq:Mertens}.

We write 
\begin{equation}
\label{eq:lammmusplit}
\begin{split}
\sum_{\substack{z \leq m \leq Y \\ (m,P(z))=1}}  \frac{\lambda(m)}{m} = &\sum_{\substack{z \leq m \leq Y \\ (m,P(z))=1}}  \frac{|\mu(m)|\lambda(m)}{m} +\sum_{\substack{z \leq m \leq Y \\ (m,P(z))=1}}  \frac{(1-|\mu(m)|)\lambda(m)}{m}.
\end{split}
\end{equation}
For the second sum we get by (\ref{eq:f(n)/n_average}) and  (\ref{eq:Mertens})
\[
\begin{split}
\sum_{\substack{z \leq m \leq Y \\ (m,P(z))=1}}  \frac{(1-|\mu(m)|)\lambda(m)}{m} &\ll  \sum_{p \geq z} \frac{1}{p^2} \sum_{\substack{m \leq Y/p^2 \\ (m,P(z))=1}}  \frac{\tau(m)}{m} \ll \frac{1}{z} \left(\frac{\log Y}{\log z}\right)^2.
\end{split}
\]

In the first sum on the right hand side of~\eqref{eq:lammmusplit} we have
\begin{equation}
\label{eq:mumlamm/m}
\begin{split}
\sum_{\substack{z \leq m \leq Y \\ (m,P(z))=1}}  \frac{|\mu(m)|\lambda(m)}{m} \leq \prod_{z \leq p \leq Y} \left(1+ \frac{\lambda(p)}{p}\right) - 1 &\leq \prod_{z \leq p \leq Y} \exp\left(\frac{\lambda(p)}{p}\right) - 1 \\
&= \exp\left(\sum_{z \leq p \leq Y} \frac{\lambda(p)}{p}\right) - 1.
\end{split}
\end{equation}
Let $\delta > 0$ be small and let 
\[
K := \left\lceil \frac{1/2+\delta}{v} \right\rceil \geq \max\left\{1, \frac{1/2+\delta}{v}\right\},
\]
so that $z = q^v \geq q^{\frac{1/2+\delta}{K}}$. It is easy to see that either $K \leq 2/v$ or $K = 1$. Then by Lemma~\ref{le:exzeroTT}
\[
\begin{split}
\sum_{z \leq p \leq Y} \frac{\lambda(p)}{p} &\leq  \sum_{2 \leq k \leq K} \sum_{q^{(1/2+\delta)/k} < p \leq q^{(1/2+\delta)/(k-1)} } \frac{\lambda(p)}{p} + \sum_{q^{1/2+\delta} < p \leq Y} \frac{\lambda(p)}{p} \\
&\ll_\delta K^2 \eta^{-1/K} + \frac{\log Y}{\eta \log q} \ll \frac{1}{v^2 \eta^{v/2}} + \frac{v}{\eta} \frac{\log Y}{\log z}.
\end{split}
\]
Plugging this in~\eqref{eq:mumlamm/m} and using~\eqref{eq:vnuvbound} we obtain
\[
\sum_{\substack{z \leq m \leq Y \\ (m,P(z))=1}}  \frac{|\mu(m)|\lambda(m)}{m} \leq \exp\left(\sum_{z \leq p \leq Y} \frac{\lambda(p)}{p}\right) - 1 \ll  \frac{1}{v^2 \eta^{v/2}} + \frac{v}{\eta} \frac{\log Y}{\log z}
\]
and the claim follows.
\end{proof}

\section{Proof of Proposition~\ref{prop:typeI2}} \label{se:pfofpropI2}
\label{se:typeI2prop}
Let us study
\[
S^\pm := \sum_{\substack{m_1, m_2, n_1, n_2 \\ \pm m_1 n_1 + h = m_2 n_2 \\ (m_1 m_2 n_1 n_2, P(z)) = 1}} \chi_1(m_1) \chi_2(m_2) \psi_1(n_1) \psi_2(n_2) f\left(\frac{m_1}{M_1}, \frac{m_2}{M_2}, \frac{n_1}{N_1}, \frac{n_2}{N_2}\right).
\]
First we wish to make the variable $n_2$ implicit, so we write the above as
\[
\begin{split}
&\sum_{\substack{m_1, m_2, n_1, n_2 \\ \pm m_1 n_1 + h = m_2 n_2 \\ (m_1 m_2 n_1 n_2, P(z)) = 1}} \chi_1(m_1) \chi_2(m_2) \psi_1(n_1) \psi_2\bigg(\frac{\pm m_1n_1+h}{m_2}\bigg) f\left(\frac{m_1}{M_1}, \frac{m_2}{M_2}, \frac{n_1}{N_1}, \frac{\pm m_1 n_1 + h}{m_2 N_2}\right).
\end{split}
\]

Let us now show that we can replace the conditions $(m_2, P(z)) = 1$ and $(n_2, P(z)) = 1$ by the conditions $(m_2, hP(z)) = 1$ and $(n_2, P_h(z)) = 1$. These two sets of conditions are equivalent unless $(m_2 n_2, h) > 1$. But $(m_2 n_2, h) \mid m_1 n_1$, so in this case there must be a prime $p \geq z$ such that $p \mid m_2 n_2$, $p \mid h$ and $p \mid m_1 n_1$. Using Lemma~\ref{le:Henriot}(i) we see that the error introduced from these changes of summation conditions is
\[
\begin{split}
&\ll \sum_{\substack{p \mid h \\ z \leq p \ll X}} \sum_{\substack{m \ll X/p \\ (\pm m + h/p, P_h(z)) = 1 \\ (m, P(z)) = 1}} \tau(m)\tau(\pm m + h/p) \ll \frac{h}{\varphi(h)} X u^5 \sum_{\substack{p \mid h \\ z \leq p \ll X}} \frac{1}{p \log^2 (X/p)} \\
&\ll \frac{h}{\varphi(h)} \frac{X}{\log^2 X} \frac{u^6}{z}.
\end{split}
\]
Hence we can replace the condition $(m_2 n_2, P(z)) = 1$ by conditions $(m_2, hP(z)) = 1$ and $(n_2, P_h(z)) = 1$.

Next we shall replace the conditions $(m_1 n_1, P(z))=1$ and $(n_2, P_h(z)) = 1$ by sieve weights. By Lemma~\ref{le:Sieve}(i) with $\theta = 1/1000$ we can write
\[
\begin{split}
\mathbf{1}_{(n_2, P_h(z)) = 1} &= \sum_{\substack{d_2 \mid (n_2, P(z)) \\ (d_2, h) = 1}} \lambda_{d_2} + O\left(\sum_{r_2 \geq u/1000 - \beta} 2^{-Ar_2} \mathbf{1}_{(n_2, P_h(z_{r_2})) = 1} \tau(n_2)^{A+1}\right) \\
\mathbf{1}_{(n_1, P(z)) = 1} &= \sum_{\substack{d_1 \mid (n_1, P(z))}} \lambda_{d_1} + O\left(\sum_{r_1 \geq u/1000 - \beta} 2^{-Ar_1} \mathbf{1}_{(n_1, P(z_{r_1})) = 1} \tau(n_1)^{A+1}\right) \\
\mathbf{1}_{(m_1, P(z)) = 1} &= \sum_{\substack{e \mid (m_1, P(z))}} \lambda_{e} + O\left(\sum_{r_3 \geq u/1000 - \beta} 2^{-Ar_3} \mathbf{1}_{(m_1, P(z_{r_3})) = 1} \tau(m_1)^{A+1}\right).
\end{split}
\]
Using this, and noticing that e.g. $\mathbf{1}_{(m_1, P(z)) = 1} \leq 2^{-A \cdot 0} \mathbf{1}_{(m_1, P(z_{0})) = 1} \tau(m_1)^{A+1}$, we obtain
\[
\begin{split}
S^{\pm} &= \sum_{\substack{d_1 d_2 | P(z) \\ (d_2,h)=1}} \lambda_{d_1} \lambda_{d_2} \sum_{\substack{e |P(z)}}  \lambda_e \sum_{\substack{m_1,m_2 \\( m_2,h P(z))=1}} \chi_1(em_1) \chi_2(m_2) \psi_2(m_2) W(d_1, d_2, em_1, m_2)\\
& + O\left(\frac{h}{\varphi(h)} \frac{X}{\log^2 X} \frac{u^6}{z}\right) + O\Biggl(\sum_{\substack{r_1, r_2, r_3 \geq 0 \\ \max r_j \geq \frac{u}{1000}-\beta}} 2^{-A(r_1 +r_2 +r_3)} \\
& \quad \cdot \sum_{\substack{m_1, m_2, n_1, n_2 \\ \pm m_1 n_1 +h = m_2 n_2 \\  (n_1, P(z_{r_1})) = (n_2, P_h(z_{r_2}))= 1 \\ (m_1, P(z_{r_3})) = (m_2, h P(z)) = 1}} (\tau(m_1) \tau(n_1) \tau(n_2))^{A+1}  f\left(\frac{m_1}{M_1}, \frac{m_2}{M_2}, \frac{n_1}{N_1}, \frac{\pm m_1 n_1 + h}{m_2 N_2}\right) \Biggr)
\end{split}
\]
with
\[
\begin{split}
W(d_1,d_2,em_1,m_2) &:=
\sum_{\substack{n_1 \\ \pm e m_1 d_1 n_1 \equiv -h  \Mod{d_2m_2}}} \psi_1(d_1 n_1) \psi_2(\pm e m_1 d_1 n_1+h) \\
& \qquad \cdot f\left(\frac{em_1}{M_1}, \frac{m_2}{M_2}, \frac{d_1n_1}{N_1}, \frac{\pm em_1 d_1n_1 + h}{m_2 N_2}\right).
\end{split}
\]
We can concentrate on the main term. Note that since $(d_2 m_2, h) = 1$, we can add the condition $(e m_1 d_1, d_2m_2) = 1$. Due to the support of $\chi$ and $\psi$ (noting that $d_2 m_2 \mid \pm e m_1 d_1 n_1 + h$) we can also add conditions $(d_1 d_2 e m_1 m_2, q) = 1$. Hence we need to study
\begin{equation}
\label{eq:SumWconds}
\begin{split}
 \sum_{\substack{d_1 d_2 | P(z) \\ (d_2,d_1 h)=1 \\ (d_1 d_2, q) = 1}} \lambda_{d_1} \lambda_{d_2} \sum_{\substack{e |P(z) \\ (e, d_2 q) = 1}}  \lambda_e \sum_{\substack{m_1,m_2 \\( m_2,h P(z))=1 \\ (e m_1 d_1, d_2 m_2) = 1 \\ (m_1 m_2, q) = 1}} \chi_1(em_1) \chi_2(m_2) \psi_2(m_2) W(d_1, d_2, em_1, m_2).
\end{split}
\end{equation}

Now
\[
\begin{split}
&\int f\left(\frac{em_1}{M_1}, \frac{m_2}{M_2}, \frac{d_1 y}{N_1}, \frac{\pm em_1 d_1 y + h}{m_2 N_2}\right) e(ky) dy \\
& = \frac{1}{em_1 d_1}  \int f\left(\frac{em_1}{M_1}, \frac{m_2}{M_2}, \frac{y}{em_1 N_1}, \frac{\pm y + h}{m_2 N_2}\right) e\left(k\frac{y}{em_1 d_1}\right) dy,
\end{split}
\]
so by Poisson summation (Lemma~\ref{le:Poisson}) we get 
\begin{equation}
\label{eq:WAfterPoisson}
\begin{split}
&W(d_1, d_2, em_1, m_2) = \frac{1}{d_1 d_2 em_1 m_2 q} \sum_{k \in \mathbb{Z}} \int f\left(\frac{em_1}{M_1}, \frac{m_2}{M_2}, \frac{y}{em_1 N_1}, \frac{\pm y+h}{m_2 N_2}\right) \\
& \quad \cdot e\left(k\frac{y}{d_1 d_2 em_1 m_2 q}\right) dy \sum_{\substack{n \, (q d_2 m_2) \\ \pm n \equiv -h \overline{d_1 em_1} \Mod{d_2 m_2}}} \psi_1(d_1 n) \psi_2(\pm em_1 d_1n + h) e_{qd_2 m_2}(kn).
\end{split}
\end{equation}
By the Chinese remainder theorem we can write
\[ 
n = \mp q \bar{q} h \overline{d_1 e m_1} + d_2m_2 \gamma,
\]
so that
\begin{equation}
\label{eq:PoisChinese}
\begin{split}
&\sum_{\substack{n \, (q d_2 m_2) \\ n \equiv -h \overline{d_1 em_1} \Mod{d_2 m_2}}} \psi_1(d_1 n) \psi_2(\pm em_1 d_1n + h) e_{qd_2 m_2}(kn) \\
&= e_{d_2m_2}(\mp \bar{q} hk \overline{d_1 e m_1}) \sum_{\substack{\gamma \,(q) }} \psi_1(d_1d_2m_2 \gamma) \psi_2(\pm \gamma d_1d_2 em_1m_2 + h) e_{q}(k\gamma).
\end{split}
\end{equation}
\subsection{Contribution from $k=0$}
Since in~\eqref{eq:SumWconds} we have $(em_1 d_1 d_2 m_2,q) = 1$, writing $\gamma' = \gamma e m_1 d_1 d_2 m_2$ we get
\[
\begin{split}
\sum_{\substack{\gamma \,(q) }} \psi_1(d_1 d_2m_2 \gamma) \psi_2(\pm \gamma d_1d_2 em_1m_2 + h) &=  \psi_1(em_1) \sum_{\substack{\gamma' \,(q) }} \psi_1(\gamma')\psi_2(\pm \gamma'  + h) \\
& =: \psi_1(em_1) U^{\pm}(h;q),
\end{split} 
\]
say. Recombining the variables $e, m_1$, we see that the contribution from the part of~\eqref{eq:WAfterPoisson} with $k=0$ to~\eqref{eq:SumWconds} is
\begin{equation}
\label{eq:Um1m2sumlamd1d2e}
\begin{split}
&\frac{U^{\pm}(h;q)}{q} \sum_{\substack{m_1,m_2 \\( m_2,hP(z))=1 \\ (m_1, m_2) = 1}} \frac{\chi_1(m_1) \chi_2(m_2) \psi_1(m_1) \psi_2(m_2)}{m_1 m_2}  \Biggl(\sum_{\substack{d_1, d_2 | P(z) \\ (d_2,d_1 h m_1)=1 \\ (d_1 d_2, q) = 1}} \frac{\lambda_{d_1} \lambda_{d_2}}{d_1 d_2} \Biggr) \\
& \qquad \cdot    \int f\left(\frac{m_1}{M_1}, \frac{m_2}{M_2}, \frac{y}{m_1 N_1}, \frac{\pm y + h}{m_2 N_2}\right) dy \sum_{\substack{e |(m_1, P(z))}}  \lambda_e
\end{split}
\end{equation}
(here we were able to drop the condition $(d_1, m_2) = 1$ since $d_1 \mid P(z)$ and $(m_2, P(z)) = 1$).
Notice that by Lemma~\ref{le:SieveComb} with $\theta= 1/1000$ and~\eqref{eq:chi00} we have, for $m_1 \in \mathbb{N}$,
\begin{equation}
\label{eq:Ud1d2}
\begin{split}
&\frac{U^{\pm}(h;q)}{q} \sum_{\substack{d_1, d_2 | P(z) \\ (d_2,d_1 h m_1)=1 \\ (d_1 d_2, q) = 1}} \frac{\lambda_{d_1} \lambda_{d_2}}{d_1 d_2} \\
&=\frac{U^{\pm}(h;q)}{q} \left(1+O_{A} \left(e^{-Au/2000}\right)\right) \prod_{\substack{p < z \\ p \nmid q}} \left(1-\frac{1}{p}\right) \prod_{\substack{p < z \\ p \nmid hqm_1}} \left(1-\frac{1}{\varphi(p)}\right) \\
&\ll \frac{u^2}{\log^2 X} \frac{h}{\varphi(h)}\frac{m_1}{\varphi(m_1)}.
\end{split}
\end{equation}

Using Lemma~\ref{le:Sieve}(i) and~\eqref{eq:Ud1d2} we see that replacing the sum over $e$ in~\eqref{eq:Um1m2sumlamd1d2e} by $\mathbf{1}_{(m_1, P(z)) = 1}$ affects~\eqref{eq:Um1m2sumlamd1d2e} by
\[
\ll \frac{h}{\varphi(h)} \frac{X}{\log^2 X}  u^2 \sum_{r \geq u/1000-\beta} 2^{-Ar} \sum_{\substack{m_1 \ll M_1 ,m_2 \ll M_2 \\(m_1, P(z_r)) = ( m_2,P(z))=1}} \frac{\tau(m_1)^{A+1}}{\varphi(m_1) m_2}.
\]
Using~\eqref{eq:f(n)/n_average} and~\eqref{eq:Mertens}, and taking $\beta$ sufficiently large in terms of $A$, this is, as in our earlier arguments,
\[
\ll_A \frac{h}{\varphi(h)} \frac{X}{\log^2 X} e^{-Au/3000}
\]
Using also the equality in~\eqref{eq:Ud1d2}, handling the error term with~\eqref{eq:f(n)/n_average} and~\eqref{eq:Mertens}, we obtain that~\eqref{eq:Um1m2sumlamd1d2e} equals
\[
\begin{split}
&\frac{U^{\pm}(h;q)}{q} \sum_{\substack{m_1,m_2 \\(m_1 m_2,P(z))=1 \\ (m_1, m_2) = 1}} \frac{\chi_1(m_1) \chi_2(m_2) \psi_1(m_1) \psi_2(m_2)}{m_1 m_2}  \prod_{\substack{p < z \\ p \nmid q}} \left(1-\frac{1}{p}\right) \prod_{\substack{p < z \\ p \nmid hq}} \left(1-\frac{1}{\varphi(p)}\right) \\
& \qquad \cdot    \int f\left(\frac{m_1}{M_1}, \frac{m_2}{M_2}, \frac{y}{m_1 N_1}, \frac{\pm y + h}{m_2 N_2}\right) dy  +  O\left(\frac{h}{\varphi(h)} \frac{X}{\log^2 X} e^{-Au/3000}\right)
\end{split}
\]
Finally we need to remove the condition $(m_1, m_2) = 1$. Since $(m_1 m_2, P(z)) = 1$, using~\eqref{eq:Ud1d2} and then~\eqref{eq:f(n)/n_average} and~\eqref{eq:Mertens}, this introduces an error
\[
\ll \frac{h}{\varphi(h)}\frac{X}{\log^2 X} u^2 \sum_{z \leq p \leq X} \frac{1}{p(p-1)}\sum_{\substack{m_1,m_2 \leq X \\(m_1 m_2,P(z))=1}} \frac{1}{\varphi(m_1) m_2} \ll \frac{h}{\varphi(h)}\frac{X}{\log^2 X} \frac{u^4}{z}.
\]
\subsection{Contribution from $k\neq 0$}
By partial integration we have, for $k \neq 0$,
\[
\begin{split}
&\int f\left(\frac{em_1}{M_1}, \frac{m_2}{M_2}, \frac{y}{em_1 N_1}, \frac{\pm y + h}{m_2 N_2}\right) e\left(k\frac{y}{d_1 d_2 em_1 m_2 q}\right) dy \\
&\ll_j \left(\frac{k}{d_1 d_2 M_1 M_2 q}\right)^{-j} \left(\frac{\delta^{-1}}{M_1 N_1} + \frac{\delta^{-1}}{M_2 N_2}\right)^j \ll_j \left(\frac{\delta^{-1} d_1 d_2 M_1 M_2 q}{k X}\right)^j.
\end{split}
\]
Hence $|k| > X^\eps \frac{ q d_1 d_2 M_1 M_2}{\delta X}$ contribute $O(X^{-100})$ to~\eqref{eq:WAfterPoisson}. The remaining part contributes to~\eqref{eq:SumWconds} by~\eqref{eq:WAfterPoisson} and~\eqref{eq:PoisChinese}
\begin{equation}
\label{eq:kneq0contr}
\begin{split}
&\sum_{\substack{d_1 d_2 | P(z) \\ (d_2,d_1 h)=1 \\ (d_1 d_2, q) = 1}} \lambda_{d_1} \lambda_{d_2} \sum_{\substack{e |P(z) \\ (e, d_2 q) = 1}}  \lambda_e \sum_{\substack{m_1,m_2 \\( m_2,hP(z))=1 \\ (e m_1 d_1, d_2 m_2) = 1 \\ (m_1 m_2, q) = 1}}\chi_1(em_1) \chi_2(m_2) \psi_2(m_2) \\
&\cdot \frac{1}{q d_1 d_2 e m_1 m_2} \sum_{0 < |k| \leq \frac{ q d_1 d_2 M_1 M_2}{\delta X^{1-\varepsilon}}} \int f\left(\frac{em_1}{M_1}, \frac{m_2}{M_2}, \frac{y}{em_1 N_1}, \frac{\pm y + h}{m_2 N_2}\right) e\left(k\frac{y}{d_1 d_2 em_1 m_2 q}\right) dy \\
& \qquad \cdot e_{d_2m_2}(\mp \bar{q} hk \overline{d_1 e m_1}) \sum_{\substack{\gamma \,(q) }} \psi_1(d_1d_2m_2 \gamma) \psi_2(\pm \gamma d_1d_2 em_1m_2 + h) e_{q}(k\gamma) 
\end{split}
\end{equation}
We write
\[
\begin{split}
&\int f\left(\frac{em_1}{M_1}, \frac{m_2}{M_2}, \frac{y}{em_1 N_1}, \frac{\pm y+h}{m_2 N_2}\right) e\left(k\frac{y}{d_1 d_2 em_1 m_2 q}\right) dy \\
&= m_1 \int f\left(\frac{em_1}{M_1}, \frac{m_2}{M_2}, \frac{y}{e N_1}, \frac{\pm y m_1 + h}{m_2 N_2}\right) e\left(k\frac{y}{d_1 d_2 e m_2 q}\right) dy
\end{split}
\]
and rearrange~\eqref{eq:kneq0contr} so that the sum over $m_1$ is innermost. Splitting also the sum over $m_1$ into congruence classes modulo $q$,~\eqref{eq:kneq0contr} is bounded by
\begin{equation}
\label{eq:kneq0contr2}
\begin{split}
&\ll_\eps q^2 X^{\eps} \sum_{\substack{e |P(z) \\ e \leq D}} \sum_{\substack{(m_2,P(z))=1\\ m_2 \ll M_2}}  \sum_{\substack{d_1 d_2 | P(z) \\ d_1,d_2 \leq D \\ (d_2,h)=1}} \mathbf{1}_{(d_1 e q, d_2 m_2) = 1} \\
&\quad \cdot \frac{X}{q d_1 d_2 M_1 M_2} \sum_{0 < |k| \leq \frac{ q d_1 d_2 M_1 M_2}{\delta X^{1-\varepsilon}}} \left|\Upsilon(h,k,e,q, d_1; d_2 m_2)\right|,
\end{split}
\end{equation}
where, for some reduced residue $\alpha \, (q)$ and $y \asymp eN_1$, we have
\[
\Upsilon(h,k, e, q, d_1 ; d_2 m_2) := \sum_{\substack{m_1 \equiv \alpha \, (q) \\ (m_1, d_2 m_2) = 1}} f\left(\frac{em_1}{M_1}, \frac{m_2}{M_2}, \frac{y}{eN_1}, \frac{\pm y m_1 + h}{m_2 N_2}\right)  e_{d_2m_2}(\mp \bar{q} hk \overline{d_1 e m_1})
\]
which is an incomplete Kloosterman sum modulo $d_2m_2$. Applying Lemma \ref{le:kloosterman} we get
\[
\Upsilon(h,k, e, q, d_1 ; d_2 m_2) \ll_\eps \delta^{-1}(d_2 m_2)^{1/2 + \eps} + \frac{M_1 (hk,d_2m_2)}{e d_2m_2q}.
\]
Hence~\eqref{eq:kneq0contr2} is bounded by (using Lemma \ref{le:gcdsum} and that $D = X^{1/1000}$ and $M_2 \ll (M_2 N_2)^{1/2} \ll (\delta^{-1} X)^{1/2}$)
\[
\begin{split}
&\ll_\eps q^2 X^{2\eps} \bigg(\delta^{-2}D^{3+1/2} \sum_{m_2 \ll M_2} m_2^{1/2} \\
&\qquad \qquad + \sum_{e \leq D}\sum_{\substack{d_1, d_2\leq D \\ m_2 \ll M_2}} \frac{X}{q d_1 d_2 M_1 M_2} \sum_{0 < |k| \leq \frac{ q d_1 d_2 M_1 M_2}{\delta X^{1-\varepsilon}}} \frac{M_1 (hk,d_2m_2)}{e d_2m_2q} \bigg) \\
&\ll_\eps   q^2 X^{2\eps} \delta^{-2} D^{7/2} (\delta^{-1} X)^{(1/2) \cdot (3/2)} +  q X^{4\eps} \delta^{-1} D M_1 \ll  \delta^{-3} X^{7/9} q^2,
\end{split}
\]
and the claim follows.

\section{Proof of Lemma \ref{le:chisums}}
\label{se:chisums}
Recalling that $z=X^{1/u}$, by (\ref{eq:Mertens}) it suffices to show that
\[
\begin{split}
&\sum_{\substack{n \leq N \\ (n,P(z))=1}} \frac{\chi(n)\log(y/n)}{n } = \prod_{p < z} \left(1-\frac{1}{p}\right)^{-1}  + O\left(\left( \frac{u^3}{v^2 \eta^{v/2}} + \frac{u^4v}{\eta}\right)\log X \right) \\
&\qquad + O_A\left(\left(\frac{u^3}{z} +e^{-A u/3000}\right)\log X\right) + O_{\varepsilon, C}\left(\exp(- C\log^{3/5-\eps} X)\right).
\end{split}
\]
We first replace $\chi(n)$ by $\mu(n)$ in the sum. We have
\[
\chi(n) = (\lambda\ast \mu)(n) = \mu(n) + \sum_{\substack{n=km \\ m > 1}} \mu(k)\lambda(m),
\]
so that (analogously to~\eqref{eq:lambda'-Lambda})
\begin{equation}
\label{eq:GNdec}
\begin{split}
\sum_{\substack{n \leq N \\ (n,P(z))=1}} \frac{\chi(n)\log (y/n)}{n} =& \sum_{\substack{n \leq N \\ (n,P(z))=1}} \frac{\mu(n)\log (y/n)}{n}  + \sum_{\substack{km \leq N \\ (km,P(z))=1 \\ m \geq z}} \frac{\mu(k)\lambda(m) \log \frac{y}{km}}{km}.
\end{split}
\end{equation}
Using Lemma \ref{le:exzero} the second term gives (by (\ref{eq:f(n)/n_average}) and (\ref{eq:Mertens})) an admissible contribution 
\[
\ll \log X \sum_{\substack{ k \leq N \\ (k,P(z))=1}} \frac{1}{k}  \sum_{\substack{z \leq m \leq N \\ (m,P(z))=1}} \frac{\lambda(m)}{m} \ll u \log X \left(\frac{1}{v^2\eta^{v/2}} + \frac{uv}{\eta} + \frac{1}{z} \right) u^2.
\]
Writing $\lambda_L(n)$ for the Liouville function, the first sum on the right hand side of~\eqref{eq:GNdec} is by~\eqref{eq:f(n)/n_average} and~\eqref{eq:Mertens}
\[
\begin{split}
&\sum_{\substack{n \leq N \\ (n,P(z))=1}} \frac{\lambda_L(n)\log y/ n}{n} + O\left(\log X \sum_{p \geq z} \frac{1}{p^2} \sum_{\substack{n \leq N/p^2 \\ (n,P(z))=1}} \frac{1}{n}\right) \\
&=  \sum_{\substack{n \leq N \\ (n,P(z))=1}} \frac{\lambda_L(n)\log y/ n}{n} + O\left(\frac{u \log X}{z}\right). 
\end{split}
\]

We deal with the condition $(n, P(z)) = 1$ using Lemma~\ref{le:Sieve}(i) with $\theta = 1/1000$. This gives
\[
\begin{split}
\sum_{\substack{n \leq N \\ (n,P(z))=1}} \frac{\lambda_L(n)\log y/ n}{n} &= \sum_{d \mid P(z)} \frac{\lambda_d \lambda_L(d)}{d} \sum_{n \leq N/d} \frac{\lambda_L(n) \log \frac{y}{dn}}{n} \\
 & \quad + O\left(\log X \sum_{r \geq u/1000-\beta} 2^{-Ar} \sum_{n \leq N} \mathbf{1}_{(n, P(z_r)) = 1} \frac{\tau(n)^{A+1}}{n}\right).
\end{split}
\]
By~\eqref{eq:f(n)/n_average} and~\eqref{eq:Mertens} the error term is
\[
\begin{split}
&\ll_A \log X \sum_{r \geq u/1000-\beta} 2^{-Ar} \left(\frac{\log X}{\log z_r}\right)^{2^{A+1}} \ll \log X  \sum_{r \geq u/1000-\beta} 2^{-Ar} u^{2^{A+1}}\left(\frac{\beta}{\beta - 1}\right)^{2^{A+1}r} \\
&\ll_A  e^{-Au/3000} \log X 
\end{split}
\]
when $\beta$ is sufficiently large in terms of $A$.

For $\Re s > 1$, let
\[
F(s):= \sum_{n \in \mathbb{N}} \frac{\lambda_L(n)}{n^s} = \prod_{p \in \mathbb{P}} \left(1+\frac{1}{p^s}\right)^{-1} = \frac{1}{\zeta(s)} \prod_{p \in \mathbb{P}} \frac{1}{1-\frac{1}{p^{2s}}},
\]
so that
\[
\sum_n \frac{\lambda_L(n) \log\frac{y}{dn}}{n^s} = F(s)\log\frac{y}{d} + F'(s).
\]
Note that $F(s)$ has a simple zero at $s=1$ whereas $F'(s)$ is holomorphic but non-zero at $s=1$. 

By Perron's formula (see e.g.~\cite[Corollary 5.3]{MVBook}) we have, for  $T := \exp(2C\log^{3/5-\varepsilon} X)$,
\[
\begin{split}
&\sum_{d \mid P(z)} \frac{\lambda_d \lambda_L(d)}{d} \sum_{n \leq N/d} \frac{\lambda_L(n) \log \frac{y}{dn}}{n} \\
&= \sum_{d \mid P(z)} \frac{\lambda_d \lambda_L(d)}{d} \frac{1}{2 \pi i}\int_{1/\log N-iT}^{1/\log N+iT}  \left(F(s+1)\log\frac{y}{d} + F'(s+1)\right) \frac{(N/d)^s}{s} ds + O\left(\frac{\log^5 X}{T}\right).
\end{split}
\]
We move the integration line to $\Re s = - 1/\log^{2/3+\varepsilon} T$, staying in the zero-free region for $\zeta(s)$ (see e.g.~\cite[Theorem 8.29]{IwKo}). From the residue at $s=0$ we get a main term
\[
\sum_{d \mid P(z)} \frac{\lambda_d \lambda_L(d)}{d} F'(1) = \sum_{d \mid P(z)} \frac{\lambda_d \lambda_L(d)}{d} \prod_{p \in \mathbb{P}} \frac{1}{1-\frac{1}{p^2}}.
\]
By Lemma~\ref{le:Sieve}(ii) this equals
\[
\begin{split}
(1+O_A(e^{-Au/2000})) \prod_{p < z} &\left(1+\frac{1}{p}\right) \prod_{p \in \mathbb{P}} \frac{1}{\left(1+\frac{1}{p}\right)\left(1-\frac{1}{p}\right)} \\
&= \left(1+O_A\left(e^{-Au/2000} + \frac{1}{z}\right)\right)  \prod_{p < z} \left(1-\frac{1}{p}\right)^{-1}.
\end{split}
\]
Let us now consider the remaining integral. For $s = -1/\log^{2/3+\varepsilon} T + it$ with $|t| \leq T$ one has (see e.g.~\cite[Theorem 8.29]{IwKo})
\[
\frac{1}{\zeta(s+1)} \ll \log^{2/3+\varepsilon} T \quad \text{and} \quad \frac{\zeta'(s+1)}{\zeta(s+1)} \ll \log^{2/3+\varepsilon} T.
\]
Hence the remaining integral contributes
\[
\begin{split}
&\ll \sum_{d \leq N^{1/1000}} \frac{1}{d} \left|\int_{-1/\log^{2/3+\varepsilon} T -iT}^{-1/\log^{2/3+\varepsilon} T + iT}  \left(F(s+1)\log\frac{y}{d} - F'(s+1)\right) \frac{(N/d)^s}{s} ds \right| \\
&\ll \log^2 X \cdot \log^3 T \cdot N^{-1/(2\log^{2/3+\varepsilon} T)} \ll_{C, \varepsilon} \exp(-C\log^{3/5-\varepsilon} X),
\end{split}
\]
and the claim follows.

\section{Proof of Theorems~\ref{th:MTzero} and~\ref{th:Goldbach}} 
\label{se:twinPrimeThms}
Theorems~\ref{th:MTzero} and~\ref{th:Goldbach} are trivial unless $h$ is even. Furthermore they follow from Lemma~\ref{le:Henriot}(i) with $w_1 = w_2 = X^{1/4}$ and $m_1 = m_2 = 0$ unless $\eta$ is large. We will assume these as well as $\eta \ll q^\varepsilon$ from~\eqref{eq:Siegeleta}.

As explained in beginning of Section~\ref{se:initial}, it suffices to study, with $g$ as there,
\[
\sum_{n} g\left(\frac{n}{X}\right) \Lambda(n) \Lambda(\pm n+h),
\]
where in case of $+$-sign we have $0 \neq |h| \leq X^{1+\varepsilon/2}$ and in case of $-$-sign we have $X \in [h^{1-\varepsilon/3}, h/4]$, for some small $\varepsilon > 0$. Here we have done a dyadic splitting of $n$, discarding $n \leq X^{1-\varepsilon/2}$. Hence now $q = X^{1/V'}$ for some $V' \in [(1-\varepsilon/2)V, V]$.

Define $z := X^{1/u}$ for some $u$ to be chosen shortly (in~\eqref{eq:uchoice}). Then $z = q^{V'/u}$.  By~\eqref{eq:Lnn+hSplit} and Lemmas~\ref{le:cncor} and~\ref{le:exzero} we have
\[
\begin{split}
&\sum_{n} g\left(\frac{n}{X}\right) \Lambda(n) \Lambda(\pm n+h) = \sum_{\substack{n \\ (n(\pm n+h), q P(z)) = 1}}  g\left(\frac{n}{X}\right) \lambda'(n) \lambda'(\pm n+h) \\
&\qquad + O\left(\frac{h}{\varphi(h)} X \left(\frac{u^8}{V^2 \eta^{V/(3u)}} + \frac{u^6 V}{\eta} + \frac{u^{10}}{q^{V/(2u)}} \right) + (z+\omega(q)) \log^2 X\right).
\end{split}
\]
To balance the error terms with errors of the type $e^{-Au/3000}$ that we will encounter later, we choose
\begin{equation}
\label{eq:uchoice}
u = \min\left\{\frac{\sqrt{V \log \eta}}{10 C}, \log \eta\right\}.
\end{equation}
With this choice, the error terms are acceptable and we can concentrate on the main term.

By the definition of $\lambda'$ (see~\eqref{eq:deflamlam'}), here
\begin{equation}
\label{eq:lam'lam'sum}
\begin{split}
&\sum_{\substack{n \\ (n(\pm n+h), q P(z)) = 1}}  g\left(\frac{n}{X}\right) \lambda'(n) \lambda'(\pm n+h) \\
&= \sum_{\substack{m_1, m_2, n_1, n_2 \\ \pm m_1 n_1 + h = m_2 n_2 \\(m_1 n_1 m_2 n_2, P(z)) = 1}}  g\left(\frac{m_1 n_1}{X}\right) \chi(m_1) \chi(m_2) \chi_0(n_1) \chi_0(n_2) \log n_1 \log n_2.
\end{split}
\end{equation}
Let $X_{\pm} := \pm X + h \asymp \max\{X, h\} \ll X^{1+\varepsilon/2}$, so that $m_2 n_2 \asymp X_{\pm}$.
We make a smooth partition of the variables $m_j$. We let $\Psi\colon \mathbb{R} \to [0, 1]$ be a smooth function for which $\Psi(x) = 0$ for $x \leq 1$ and $\Psi(x) = 1$ for $x \geq 2$. Define then $F \colon \mathbb{R}_+ \to [0, 1]$ by
\[
F(x) = \begin{cases}
\Psi(x) & \text{if $0 < x \leq 2$} \\
1-\Psi(x/2) & \text{if $x > 2$}.
\end{cases}
\]  
The function $F(x)$ is supported on $[1, 4]$ and gives a smooth partition of unity
\[
\sum_{j \in \mathbb{Z}} F\left(\frac{x}{2^j}\right) = 1 \quad \text{for all $x > 0$}.
\]
We write $N_1 = X/M_1$ and $N_2 = X_{\pm}/M_2 \asymp \max\{X, h\}/M_2$. Then when $m_j \in [M_j, 4M_j]$,  in~\eqref{eq:lam'lam'sum}, then $n_j \in [N_j/20, 20N_j]$. Let $h \colon \mathbb{R}_{\geq 0} \to [0, 1]$ be a smooth function supported on $[1/40, 40]$ such that $h(x)=1$ for $x \in [1/20, 20]$, and write
\[
f_{M_1, M_2}(x_1, x_2, y_1, y_2) = g(x_1 y_1) F(x_1) F(x_2)  h(y_1) h(y_2) \frac{\log \left(y_1N_1\right) \log\left(y_2N_2\right)}{\log^2 X}.
\]
Then
\[
\begin{split}
&\sum_{\substack{n \\ (n(\pm n+h), q P(z)) = 1}}  g\left(\frac{n}{X}\right) \lambda'(n) \lambda'(\pm n+h) \\
&=\log^2 X \sum_{\substack{M_1 = 2^{i_1}, M_2 = 2^{i_2} \\ 1/4 \leq M_1 \leq 4X \\ 1/4 \leq M_2 \leq 4X_{\pm}}} \sum_{\substack{m_1, m_2, n_1, n_2 \\ \pm m_1 n_1 + h = m_2 n_2 \\(m_1 n_1 m_2 n_2, P(z)) = 1}} f_{M_1, M_2}\left(\frac{m_1}{M_1}, \frac{m_2}{M_2}, \frac{n_1}{N_1}, \frac{n_2}{N_2}\right) \\
& \hspace{200pt} \cdot \chi(m_1) \chi(m_2) \chi_0(n_1) \chi_0(n_2) \\
&=: \Sigma^{\pm}_{S, S} + \Sigma^{\pm}_{S, L} + \Sigma^{\pm}_{L, S} + \Sigma^{\pm}_{L, L},
\end{split}
\]
say, where $\Sigma^{\pm}_{S, S}$ corresponds to $M_1 \leq X^{1/2}$ and $M_2 \leq X_{\pm}^{1/2}$, $\Sigma^{\pm}_{S, L}$ corresponds to $M_1 \leq X^{1/2}$ and $M_2 > X_{\pm}^{1/2}$, $\Sigma^{\pm}_{L, S}$ corresponds to $M_1 > X^{1/2}$ and $M_2 \leq X_{\pm}^{1/2}$, and $\Sigma^{\pm}_{L, L}$ corresponds to $M_1 \geq X^{1/2}$ and $M_2 \geq X_{\pm}^{1/2}$. 

Let us start with $\Sigma^{\pm}_{S, S}$. Applying Proposition~\ref{prop:typeI2} with $\psi_1 = \psi_2 = \chi_0, \chi_1 = \chi_2 = \chi$, and $\delta = X^{-1/1000}$, handling the error term with~\eqref{eq:ConsHenriot}, we see that, for any $A \geq 1$, 
\[
\begin{split}
&\Sigma^{\pm}_{S, S} = \log^2 X \prod_{\substack{p < z \\ p \mid h, p \nmid q}} \left(1-\frac{1}{p}\right) \prod_{\substack{p < z \\ p \nmid hq}} \left(1-\frac{2}{p}\right) \cdot \frac{1}{q} \sum_{\gamma \Mod{q}} \chi_0(\gamma) \chi_0(\pm \gamma +h)  \\
& \cdot  \sum_{\substack{M_1 = 2^{i_1}, M_2 = 2^{i_2} \\ 1/4 \leq M_1 \leq 4X \\ 1/4 \leq M_2 \leq 4X_{\pm}}} \sum_{\substack{m_1, m_2 \\ (m_1 m_2, P(z)) = 1}} \frac{\chi(m_1) \chi(m_2)}{m_1 m_2} \int f_{M_1, M_2}\left(\frac{m_1}{M_1}, \frac{m_2}{M_2}, \frac{y}{m_1 N_1}, \frac{\pm y+h}{m_2 N_2}\right) dy \\
&\hspace{100pt} +O_A\left(\frac{h}{\varphi(h)} X \left(e^{-Au/3000} + \frac{u^6}{z}\right) + X^{7/9+3/1000} q^2 \right)
\end{split}
\]
Taking $A = 10^6 C^2$, the error terms are acceptable by our choice of $u$ in~\eqref{eq:uchoice}. Let us denote the main term by $\widetilde{\Sigma}^{\pm}_{S, S}$. There
\[
\begin{split}
&\sum_{\substack{M_1 = 2^{i_1}, M_2 = 2^{i_2} \\ 1/4 \leq M_1 \leq X^{1/2} \\ 1/4 \leq M_2 \leq X_{\pm}^{1/2}}} \sum_{\substack{m_1, m_2 \\ (m_1 m_2, P(z)) = 1}} \frac{\chi(m_1) \chi(m_2)}{m_1 m_2}\int f_{M_1, M_2} \left(\frac{m_1}{M_1}, \frac{m_2}{M_2}, \frac{y}{m_1 N_1}, \frac{\pm y+ h}{m_2 N_2}\right) dy  \\
&= \int g\left(\frac{y}{X}\right)\sum_{\substack{m_1, m_2 \\ (m_1 m_2, P(z)) = 1}} \frac{\chi(m_1) \chi(m_2)\log\frac{y}{m_1} \log\frac{\pm y+h}{m_2}}{m_1 m_2 \cdot \log^2 X }\sum_{\substack{M_1 = 2^{i_1}, M_2 = 2^{i_2} \\ 1/4 \leq M_1 \leq X^{1/2} \\ 1/4 \leq M_2 \leq X_{\pm}^{1/2}}}  F\left(\frac{m_1}{M_1}\right) F\left(\frac{m_2}{M_2}\right) dy.
\end{split}
\]
Note that there is no dependency between $m_1$ and $m_2$,
\[
\sum_{\substack{M_1 = 2^{i_1}\\ 1/4 \leq M_1 \leq X^{1/2}}} F\left(\frac{m_1}{M_1}\right) = 
\begin{cases}
1 & \text{if $m_1 \leq X^{1/2}/4$;} \\
0 & \text{if $m_1 \geq 4X^{1/2}$,}
\end{cases}
\]
and
\[
\sum_{\substack{M_2 = 2^{i_2}\\ 1/4 \leq M_2 \leq X_{\pm}^{1/2}}} F\left(\frac{m_2}{M_2}\right) = 
\begin{cases}
1 & \text{if $m_2 \leq X_{\pm}^{1/2}/4$;} \\
0 & \text{if $m_2 \geq 4X_{\pm}^{1/2}.$}
\end{cases}
\]
Hence by partial summation and Lemmas~\ref{le:chisums} (with $v = V'/u$) and~\ref{le:Chib2}, we obtain 
\[
\begin{split}
&\widetilde{\Sigma}^{\pm}_{S, S} = \prod_{\substack{p < z \\ p \mid h, p \nmid q}} \left(1-\frac{1}{p}\right) \prod_{\substack{p < z \\ p \nmid hq}} \left(1-\frac{2}{p}\right) \prod_{p \mid (q, h)} \left(1-\frac{1}{p}\right) \prod_{\substack{p \mid q \\ p \nmid h}} \left(1-\frac{2}{p}\right) \prod_{p < z} \left(1-\frac{1}{p}\right)^{-2} \int g\left(\frac{y}{X}\right) dy \\
& \cdot \left(1+ O_A\left(\frac{u^6}{V^2 \eta^{V/(3u)}} + \frac{Vu^4}{\eta}  + \frac{u^4}{z} + e^{-A u/3000}\right) + O_{C, \varepsilon}\left(\exp(- C\log^{3/5-\eps} X)\right)\right)^2.
\end{split}
\]
Here
\begin{equation} \label{eq:productsimplify}
\begin{split}
&\prod_{\substack{p < z \\ p \mid h, p \nmid q}} \left(1-\frac{1}{p}\right) \prod_{\substack{p < z \\ p \nmid hq}} \left(1-\frac{2}{p}\right) \prod_{p \mid (q, h)} \left(1-\frac{1}{p}\right) \prod_{\substack{p \mid q \\ p \nmid h}} \left(1-\frac{2}{p}\right) \prod_{p < z} \left(1-\frac{1}{p}\right)^{-2} \\
&= 2 \prod_{2 < p < z} \frac{1-\frac{2}{p}}{\left(1-\frac{1}{p}\right)^2} \cdot \left(1+O\left(\frac{u}{z}\right)\right) \prod_{\substack{p \mid (h, q) \\ p > 2}} \frac{1-\frac{1}{p}}{1-\frac{2}{p}} \prod_{\substack{p \mid h \\ p \nmid q \\ p > 2}} \frac{1-\frac{1}{p}}{1-\frac{2}{p}} \\
&=2 \left(1+O\left(\frac{u}{z}\right)\right) \prod_{2 < p < z} \left(1-\frac{1}{(p-1)^2}\right) \prod_{\substack{p \mid h \\ p > 2}} \left(1 +  \frac{1}{p-2}\right).
\end{split}
\end{equation}

Hence
\[
\begin{split}
\widetilde{\Sigma}^{\pm}_{S, S} &= \mathfrak{S}_h \int g\left(\frac{y}{X}\right) dy \\
& + O_{A, C, \varepsilon}\left(\frac{h}{\varphi(h)} X\left(\frac{u^6}{V^{2}\eta^{V/(3u)}} + \frac{V u^4}{\eta}  +\frac{u^4}{q^{V/(2u)}} + e^{-A u/3000} + \exp(- C\log^{3/5-\eps} X) \right)\right).
\end{split}
\]
The error terms are acceptable when $A = 10^6 C^2$ by our choice of $u$ in~\eqref{eq:uchoice}.

For $\Sigma^{\pm}_{L, L}$ we use Proposition~\ref{prop:typeI2} with $\chi_1 = \chi_2 = \chi_0, \psi_1 = \psi_2 = \chi, \delta = X^{-1/1000}$ and the roles of $M_j$ and $N_j$ interchanged. Handling the error term with~\eqref{eq:ConsHenriot}, we see that, for any $A \geq 1$, 
\[
\begin{split}
\Sigma^{\pm}_{L, L} =& \log^2 X \prod_{\substack{p < z \\ p \mid h, p \nmid q}} \left(1-\frac{1}{p}\right) \prod_{\substack{p < z \\ p \nmid hq}} \left(1-\frac{2}{p}\right) \cdot \frac{1}{q} \sum_{\gamma \Mod{q}} \chi(\gamma) \chi(\pm \gamma +h)    \\
& \cdot\sum_{\substack{M_1 = 2^{i_1}, M_2 = 2^{i_2} \\ X^{1/2} < M_1 \leq 4X \\ X_{\pm}^{1/2} < M_2 \leq 4X_{\pm}}} \sum_{\substack{n_1, n_2 \\ (n_1 n_2, P(z)) = 1}} \frac{\chi(n_1) \chi(n_2)}{n_1 n_2} \int f_{M_1, M_2}\left(\frac{y}{M_1 n_1}, \frac{\pm y + h}{M_2 n_2}, \frac{n_1}{N_1}, \frac{n_2}{N_2}\right) dy \\
& \hspace{100pt} +O_A\left(\frac{h}{\varphi(h)} X \left(e^{-Au/3000} + \frac{u^6}{z}\right) + X^{7/9+3/1000} q^2\right)
\end{split}
\]
Taking $A = 10^6 C^2$, the error term is again sufficiently small by our choice of $u$ in~\eqref{eq:uchoice}. We denote the main main term by $\widetilde{\Sigma}^{\pm}_{L,L}$. Recall that $2 \mid h$ so that by Lemma~\ref{le:Chib2}
\[
\sum_{m \Mod{q}} \chi(m(\pm m+h)) = \chi(\pm 1) \sum_{m \Mod{q}} \chi_0(m)\chi_0(\pm m+h) \cdot \mathbf{1}_{\varphi(2^r) \mid h} (-1)^{\frac{h}{\varphi(2^r)}} \prod_{\substack{p \mid q' \\ p \nmid h}} \frac{-1}{p-2}.
\] 
Furthermore
\[
\begin{split}
\sum_{\substack{M_1 = 2^{i_1}, M_2 = 2^{i_2} \\ X^{1/2} < M_1 \leq 4X \\ X_{\pm}^{1/2} < M_2 \leq 4X_{\pm}}}& \sum_{\substack{n_1, n_2 \\ (n_1 n_2, P(z)) = 1}} \frac{\chi(n_1) \chi(n_2)}{n_1 n_2} \int f_{M_1, M_2}\left(\frac{y}{M_1 n_1}, \frac{\pm y + h}{M_2 n_2}, \frac{n_1}{N_1}, \frac{n_2}{N_2}\right) dy  \\
&= \int g\left(\frac{y}{X}\right)\sum_{\substack{n_1, n_2 \\ (n_1 n_2, P(z)) = 1}} \frac{\chi(n_1) \chi(n_2)\log( n_1) \log(n_2)}{n_1 n_2} \\
& \hspace{50pt} \cdot  \sum_{\substack{M_1 = 2^{i_1}, M_2 = 2^{i_2} \\ X^{1/2} < M_1 \leq 4X \\ X_{\pm}^{1/2} < M_2 \leq 4X_{\pm}}} F\left(\frac{y}{M_1 n_1}\right) F\left(\frac{\pm y+h}{M_2 n_2}\right) dy.
\end{split}
\]
Similarly to the case of $\Sigma^{\pm}_{S, S}$, we can use partial summation, Lemma \ref{le:chisums} (with $y = 1$), and (\ref{eq:productsimplify}) to obtain
\[
\begin{split}
\widetilde{\Sigma}^{\pm}_{L,L} &= \mathfrak{S}_h \chi(\pm 1) \int g\left(\frac{y}{X}\right) dy \cdot \mathbf{1}_{\varphi(2^r) \mid h} (-1)^{\frac{h}{\varphi(2^r)}} \prod_{\substack{p \mid q' \\ p \nmid h}} \frac{-1}{p-2}  \\
& +  O\left(\frac{h}{\varphi(h)} X \left(\frac{u^6}{V^{2}\eta^{V/(3u)}} + \frac{V u^4}{\eta}  +\frac{u^4}{q^{V/(2u)}} + u e^{-A u/3000} + \exp(- C\log^{3/5-\eps} X) \right)\right)
\end{split}
\]
The error term is again acceptable by (\ref{eq:uchoice}).

We handle $\Sigma^{\pm}_{S, L}$ and $\Sigma^{\pm}_{L, S}$ similarly. The error terms are the same as before whereas the main term from Proposition~\ref{prop:typeI2} is by Lemma~\ref{le:Chib2},~\eqref{eq:f(n)/n_average},~\eqref{eq:Mertens}, and~\eqref{eq:Siegeleta}
\[
\begin{split}
&\ll X\log^2 X \prod_{\substack{p < z \\ p \mid h, p \nmid q}} \left(1-\frac{1}{p}\right) \prod_{\substack{p < z \\ p \nmid hq}} \left(1-\frac{2}{p}\right) \cdot \frac{\mathbf{1}_{(h, q) = 1}}{q} \sum_{\substack{m_1 \ll X^{1/2}, m_2 \ll X_{\pm}^{1/2} \\ (m_1 m_2, P(z)) = 1}} \frac{1}{m_1 m_2} \\
&\ll_\varepsilon \frac{h}{\varphi(h)} X \frac{u^4}{q^{1-\varepsilon}} \ll \frac{h}{\varphi(h)} X \frac{1}{\eta}.
\end{split} 
\]
Collecting everything together the claims follow.

\section{Proof of Corollary~\ref{cor:shortsexzero}}
\label{se:ShortThms}
Squaring out and applying the prime number theorem, we see that
\[
\begin{split}
\int_X^{2X}& \left(\sum_{y < n \leq y+H} \Lambda(n) - H \right)^2 dy \\
&= \int_X^{2X} \left(\sum_{y < n \leq y+H} \Lambda(n) \right)^2 - 2H \sum_{y < n \leq y+H} \Lambda(n) + H^2 dy \\
&=\sum_{|h| \leq H}\sum_{\substack{n_1, n_2 \\ n_1 = n_2+h}} \Lambda(n_1) \Lambda(n_2) \int_X^{2X} \mathbf{1}_{n_1, n_1-h \in (y, y+H]} dy - H^2 X \\
&\hspace{110pt} + O_{C, \varepsilon} \left(H^3 \log X + \frac{H^2 X}{\exp(C \log^{3/5-\varepsilon} X)}\right).
\end{split}
\]
The first term on the right hand side equals
\[
\begin{split}
&\sum_{|h| \leq H} (H-|h|) \sum_{\substack{X < n_1 \leq 2X}} \Lambda(n_1) \Lambda(n_1+h) + O\left(H^3 \log^2 X\right) \\
&= H \sum_{X < n \leq 2X} \Lambda(n)^2 + \sum_{0 < |h| \leq H} (H-|h|) \sum_{X < n \leq 2X} \Lambda(n) \Lambda(n+h) + O\left(H^3 \log^2 X\right).
\end{split}
\]
The first term on the right hand side is by the prime number theorem $\ll H X \log X$. Applying Theorem~\ref{th:MTzero} to the second term, noting that $\sum_{0 < |h| \leq H} \frac{h}{\varphi(h)} \ll H$, we see that it suffices to show that 
\[
\sum_{\substack{0 < |h| \leq H \\ h \text{ even}}} (H-|h|) \mathfrak{S}_h\left(1 + \mathbf{1}_{\varphi(2^r) \mid h} (-1)^{\frac{h}{\varphi(2^r)}} \prod_{\substack{p \mid q' \\ p \nmid h}} \frac{-1}{p-2}\right)= H^2 + O\left(H \log X + \eta^{-1}H^2\right).
\]
Recall that $q'$ is necessarily square-free (see e.g.~\cite[Section 3.3]{IwKo}). Using the bound $ \mathfrak{S}_h \ll h/\varphi(h)$ and Lemma~\ref{le:gcdsum} we see that the term depending on $q$ contributes 
\[
\ll H \sum_{\substack{0 < |h| \leq H}} \frac{h}{\varphi(h)} \prod_{\substack{p \mid q' \\ p \nmid h}} \frac{1}{p-2} \ll H \prod_{\substack{p \mid q' }} \frac{1}{p-2}  \sum_{\substack{0 < |h| \leq H }} \frac{h}{\varphi(h)} (h,q') \ll \frac{H^2}{q^{1/2}},
\]
which is admissible by~\eqref{eq:Siegeleta}. Hence, it remains to show that
\begin{equation}
\label{eq:singsersumclaim}
2 \sum_{\substack{0 < |h| \leq H \\ h \text{ even}}} (H-|h|) \prod_{p > 2} \left(1-\frac{1}{(p-1)^2}\right) \prod_{\substack{p \mid h \\ p > 2}} \left(1+\frac{1}{p-2}\right) = H^2 + O\left(H \log X\right).
\end{equation}

Now
\[
\begin{split}
 \sum_{\substack{0 < |h| \leq H \\ h \text{ even}}} (H-|h|) \prod_{\substack{p \mid h \\ p > 2}} \left(1+\frac{1}{p-2}\right)& = \sum_{j = 2}^{H} \sum_{\substack{0 < |h| < j \\ h \text{ even}}} \prod_{\substack{p \mid h \\ p > 2}} \left(1+\frac{1}{p-2}\right) \\
& = \sum_{j = 2}^{H} \sum_{\substack{0 < |h| < j/2}} \prod_{\substack{p \mid h \\ p > 2}} \left(1+\frac{1}{p-2}\right).
\end{split}
\]
Define a multiplicative function $f$ such that 
\[
f(p^\nu) = \begin{cases}
0 & \text{if $p = 2$ or $\nu \geq 2$;} \\
\frac{1}{p-2} & \text{otherwise.}
\end{cases}
\]
Then
\[
\begin{split}
&\sum_{\substack{0 < |h| < j/2}} \prod_{\substack{p \mid h \\ p > 2}} \left(1+\frac{1}{p-2}\right) = \sum_{\substack{0 < |h| < j/2}} \sum_{r \mid h} f(r) = \sum_{r < j/2} f(r) \sum_{0 < |h| < j/2r} 1 \\
&= j \sum_{r < j/2} \frac{f(r)}{r} + O\left(\sum_{r < j/2} f(r)\right) = j \prod_{p > 2} \left(1+\frac{1}{p(p-2)}\right) + O(\log j) \\
&= j \prod_{p > 2} \left(1-\frac{1}{(p-1)^2}\right)^{-1} + O(\log j)
\end{split}
\]
Thus the left hand side of~\eqref{eq:singsersumclaim} equals
\[
\begin{split}
&2\sum_{j=2}^{H} j  + O\left(H \log H\right) = H^2 +O\left(H \log X\right)
\end{split}
\]
as claimed.

\section*{Acknowledgements} The first author was supported by Academy of Finland grant no. 285894. The second author was supported by Academy of Finland grant no. 333707 and  by the European Research Council (ERC) under the European Union’s Horizon 2020 research and innovation programme (grant agreement no. 851318). 

We are grateful to Terence Tao and Joni Ter\"av\"ainen for helpful discussions and to Andrew Granville for providing us material concerning the relationship between Siegel zeros and $L(1, \chi)$.

\bibliography{biblio-except}
\bibliographystyle{plain}

\end{document}